\theoremstyle{plain}
\newtheorem{theorem}{Theorem}[section]  
\newtheorem{proposition}[theorem]{Proposition}
\newtheorem{lemma}[theorem]{Lemma}
\theoremstyle{definition}
\newtheorem{definition}[theorem]{Definition}
\theoremstyle{remark}
\newtheorem{remark}[theorem]{Remark}
\newcommand{\R}{\mathbb{R}}  
\newcommand{\di}{\mathrm{d}}  
\newcommand{\ep}{\varepsilon}  
\newcommand{\C}{\mathcal{C}}  
\newcommand{\D}{\mathcal{D}}  
\newcommand{\J}{\mathcal{J}_{\pm \ep}}  
\newcommand{\A}{\mathcal{A}}
\newcommand{\M}{\mathcal{M}}  
\newcommand{\Z}{\mathcal{Z}}  
\DeclareMathOperator{\As}{A_{\Omega}^{\textit{s}}}
\DeclareMathOperator{\divr}{div}  
\DeclareMathOperator{\minimum}{min}  
\DeclareMathOperator{\maximum}{max}  
\DeclareMathOperator{\dist}{dist}  
\DeclareMathOperator{\tr}{tr}  
\numberwithin{equation}{section}
\begin{document}

\title[Bubbling solutions for nonlocal elliptic problems]{Bubbling solutions for nonlocal elliptic problems}

\author{Juan D\'avila} 
\address{\noindent J. D\'avila -
Departamento de Ingenier\'ia Matem\'atica and CMM, Universidad
de Chile, Casilla 170 Correo 3, Santiago, Chile.}
\email{jdavila@dim.uchile.cl}

\author{Luis L\'{o}pez R\'ios}
\address{\noindent L. L\'opez R\'ios -
Departamento de Matem\'atica, Universidad de Buenos Aires, Ciudad Universitaria - Pabell\'on I - (C1428EGA) - Buenos Aires, Argentina}
\email{llopez@dm.uba.ar}

\author{Yannick Sire} 
\address{\noindent Y. Sire -
Institut de Math\'ematiques de Marseille,
Aix-Marseille Universit\'e,
9, rue F. Joliot Curie,
13453 Marseille Cedex 13 FRANCE}
\email{sire@cmi.univ-mrs.fr}
\date{}

\begin{abstract}

We investigate bubbling solutions for the nonlocal equation 
\[
  \As u =u^p,\ u >0 \quad \mbox{in } \Omega,		
\]
under homogeneous Dirichlet conditions, where $\Omega$ is a bounded and smooth domain. The operator $\As$ stands for two types of nonlocal operators that we treat in a unified way: either the spectral fractional Laplacian or the restricted fractional Laplacian. In both cases $s \in (0,1)$ and the Dirichlet conditions are different: for the spectral fractional Laplacian, we prescribe $u=0$ on $\partial \Omega$ and for the restricted fractional Laplacian, we prescribe $u=0$ on $\mathbb R^n \backslash \Omega$. We construct solutions when the exponent $p = (n+2s)/(n-2s) \pm \ep$ is close to the {\em critical one}, concentrating as $\ep \to 0$ near critical points of a reduced function involving the Green and Robin functions of the domain.

\end{abstract}

\maketitle

\tableofcontents


\section{Introduction}

This paper studies the existence of bubbling solutions for the problem  

\begin{equation} \label{1.1}
 \left\{
  \begin{aligned}
    \As u & = u^p , \ 
   u>0 \quad \text{in } \Omega,\\
   & u = 0 \quad \text{on } \Sigma,
  \end{aligned}
 \right.
\end{equation}
where $\Omega$ is a smooth bounded domain in $\R^n$, $s \in (0,1)$, $n > 2s$, $p = (n+2s)/(n-2s) \pm \ep$ ($\ep > 0$ small) and $\As$ is an operator of fractional order with suitable boundary conditions on $\Sigma$ (see below). 

For the usual Laplacian $s=1$, problem \eqref{1.1} was extensively studied when the exponent $p$ approaches the critical one from below, namely $p = (n+2s)/(n-2s) - \ep$, see Br\'ezis and Peletier \cite{BrPe1989}, Rey \cite{Re1989, Re1990, Re1999}, Han \cite{Ha1991} and Bahri, Li and Rey \cite{BaLiRe1995}. In the latter reference, bubbling solutions are found for $n \ge 4$, concentrating around nondegenerate critical points of certain object which involve the Green's and Robin's function of $\Omega$. On the other hand, the supercritical case $p = (n+2s)/(n-2s) + \ep$ was studied by del Pino, Felmer and Musso \cite{dPFeMu2002,dPFeMu2003}, in particular they showed a concentration phenomena for bubbling solutions to this problem when the domain satisfies certain "topological conditions''; for instance a domain exhibiting multiple holes.   

The purpose of the present work is to develop such a theory for equations involving fractional order operators. In the last decade, several works have been devoted to equations involving fractional operators. The canonical example is the so-called fractional laplacian $(-\Delta)^s$, $s \in (0,1)$ in $\R^n$, the Fourier multiplier of symbol $|\xi|^{2s}$. In this respect, the semi-linear equation 
$$(-\Delta)^s\, u = f(u)\,\,\,\mbox{in}\,\,\,\R^n,$$
for a certain function $f:\R^n \rightarrow \R$ attracted a lot of attention (see for instance and references therein \cite{CaTa2010,Ta2011,CS1,CS2,SV,DS,BrCodPSa2013}). 

Coming back to the problem under consideration, for the subcritical case, Choi, Kim and Lee \cite{ChKiLe2014} developed a nonlocal analog of the results by Han \cite{Ha1991} and Rey \cite{Re1990} above mentioned. They also proved Theorem~\ref{bubbling_solutions1} below in the case $p = p^*-\ep$. With a new framework in the spirit of \cite{dPFeMu2002,dPFeMu2003}, we will be able to generalize the work by Choi, Kim and Lee, and consider both the subcritical and supercritical case.

Furthermore, we treat in a unified way two types of operators, denoted here $\As$: the spectral fractional Laplacian and the restricted fractional Laplacian. We now describe them more thoroughly. 

\subsubsection*{The spectral Laplacian} Consider the classical Dirichlet Laplacian $\Delta_{\Omega}$ on the domain $\Omega$\,; then the {\em spectral definition} of the fractional powers of $\Delta_{\Omega}$ relies on the following formula in terms of the semigroup associated to the Laplacian, namely
\begin{equation}\label{sLapl.Omega.Spectral}
\displaystyle(-\Delta_{\Omega})^{s}\,
u(x)=\sum_{j=1}^{\infty}\lambda_j^s\, \hat{u}_j\, \phi_j(x)
=\frac1{\Gamma(-s)}\int_0^\infty
\left(e^{t\Delta_{\Omega}}u(x)-u(x)\right)\frac{\di t}{t^{1+s}}.
\end{equation}
where $\Gamma$ is the gamma function and $\lambda_j>0$, $j=1,2,\dotsc$ are the eigenvalues of the Dirichlet Laplacian on $\Omega$\,, written in increasing order and repeated according to their multiplicity and $\phi_j$ are the corresponding normalized eigenfunctions, namely
\[
\hat{u}_j=\int_\Omega u(x)\phi_j(x)\,dx\,,\qquad\mbox{with}\qquad \|\phi_j\|_{L^2(\Omega)}=1\,.
\]
We will denote the operator defined in such a way as $\As = (-\Delta_{\Omega})^s$, and call it the \textit{spectral fractional Laplacian}. 

\subsubsection*{The restricted fractional Laplacian} For $s \in (0,1)$, one can define a fractional Laplacian operator by using the integral representation in terms of hypersingular kernels
\begin{equation}\label{sLapl.Rd.Kernel}
(-\Delta)^{s}\, u(x)= c_{n,s}\mbox{
P.V.}\int_{\mathbb{R}^n} \frac{u(x)-u(z)}{|x-z|^{n+2s}}\,dz,
\end{equation}
where $c_{n,s}>0$ is a normalization constant. One can ``restrict'' the operator to functions that are zero outside $\Omega$: we will denote the operator defined in such a way as $\As = (-\Delta_{|\Omega})^s$, and call it \textit{the restricted fractional Laplacian}. In this case the operator $(-\Delta_{|\Omega})^s$ is a selfadjoint operator on $L^2(\Omega)$, with a discrete spectrum: we will denote by $\lambda_{s, j}>0$, $j=1,2,\dotsc$ its eigenvalues written in increasing order and repeated according to their multiplicity and we will denote by $\phi_{s, j}$ the corresponding normalized eigenfunctions.

\vspace{10pt}

In view of the previous discussion, the boundary conditions in \eqref{1.1} have to be interpreted in the following way: 
\begin{itemize}
\item $u=0$ on $\Sigma = \partial \Omega$ for the spectral fractional Laplacian,
\item $u=0$ on $\Sigma = \R^n \backslash \Omega$ for the restricted fractional Laplacian.
\end{itemize}

In the entire space, the operator in \eqref{sLapl.Rd.Kernel}, denoted $(-\Delta)^s$, can be defined through Fourier transform $\mathcal F$, by 
\[
  \mathcal F [ ( -\Delta)^s u ] (\zeta) = |\zeta|^{2s} \mathcal F [u](\zeta). 
\]
Throughout the paper, $p^*:=(n+2s)/(n-2s)$ represents the critical Sobolev exponent. For this exponent, the corresponding equation in $\R^n$
\begin{equation}\label{1.2}
 (-\Delta)^{s}\, u = u^{p^*}
\end{equation}
has an explicit family of solutions of the form
\[
  w_{\lambda,\xi}(x) = \lambda^{-\frac{n-2s}{2}} w(\lambda^{-1}(x-\xi))
\]
with $\xi \in \R^n$ and $\lambda>0$, where
\[
 w(x) = \frac{b_{n,s} }{(1+|x|^2)^{\frac{n-2s}{2}}}
\]
and $b_{n,s}$ is a positive constant (see \cite{chLiOu2006} for classification results).

We construct solutions of \eqref{1.1} that concentrate at certain points in $\Omega$ as $\ep \to 0$. These concentration points are determined by the critical points of a map which involves the Green's function of the operator $\As$ and its regular part. Let $G$ denote the Green's function for $\As$ in $\Omega$, that is, for any $\xi \in \Omega$, $G(\cdot,\xi)$ satisfies
\begin{equation} \label{greenfun}
  \begin{gathered}
    \As G(\cdot,\xi) = \delta_\xi (\cdot) \quad \text{in } \Omega, \\
    G(\cdot,\xi) = 0 \quad \text{on } \Sigma,
  \end{gathered}
\end{equation}
where $\delta_\xi$ denotes the Dirac mass at the point $\xi$. In the entire space, we denote the Green function by $\varGamma$, which satisfies
\begin{gather*}
  (-\Delta)^s\, \varGamma(x,\xi) = \delta_\xi(x) \quad\text{for all } x \in \R^n, \\
  \lim_{|x|\to\infty} \varGamma(x,\xi) = 0,
\end{gather*}
for each fixed $\xi \in \R^n$.
The function $\varGamma$ is explicitly given by
\begin{equation} \label{Gamma}
\varGamma(x,\xi) = \frac{a_{n,s}}{|x-\xi|^{n-2s}},
\end{equation}
where $a_{n,s}$ is a positive constant. We also define the regular part of the Green function $G$ of $\Omega$ by
\begin{equation}\label{regularpart}
  H(x,\xi) = \varGamma(x,\xi) - G(x,\xi) \quad \text{for } x,\xi \in \Omega,\ x \not= \xi.
\end{equation}

Given $m \in \mathbb{N}$, the following function will prove to be very important for constructing solutions of \eqref{1.1}:
\begin{equation} \label{Psi}
\Psi(\xi, \Lambda)= \frac{1}{2} \left\{\sum_{i=1}^m H(\xi_i,\xi_i) \Lambda_i^2-2\sum_{i<j} G(\xi_i,\xi_j)\Lambda_i \Lambda_j \right\} \pm \log(\Lambda_1 \dotsm \Lambda_m),
\end{equation}
$\xi = (\xi_1,\dotsc,\xi_m) \in \Omega^m$ and $\Lambda = (\Lambda_1,\dotsc,\Lambda_m) \in (0,\infty)^m$ (see \eqref{psi}).

We recall the definition of stable critical set introduced by Y.Y. Li \cite{Li1997}.

\begin{definition}[Stable critical set] \label{stable_critical_set}
Let $\A$ be a bounded set of critical points of $\Psi$. We say that $\A$ is a stable critical set if for all $\mu>0$ there is a number $\delta>0$ such that if $\Phi \in C^1(\Omega)$ and
\[
  \max_{\dist (\xi,\A) \le \mu} (|\Psi(\xi)-\Phi(\xi)| + |\nabla \Psi(\xi) - \nabla \Phi(\xi)|) < \delta,
\]
then $\Phi$ has at least one critical point $\xi$, with $\dist (\xi,\A) < \mu$.
\end{definition}

We will now state the main results of this paper. Let us start with a concentration result of {\em multiple bubble solutions}.
\begin{theorem} \label{bubbling_solutions1}
Suppose that $\Psi$ in \eqref{Psi} has a stable critical set $\mathcal{A}$. Then, for every point $(\xi_1,\dotsc,\xi_m,\Lambda_1,\dotsc,\Lambda_m) \in \A$ there exists a family of solutions of problem \eqref{1.1} which blow up and concentrate at each point $\xi_i$, $i=1,\dotsc,m$, as $\ep$ tends to zero.
\end{theorem}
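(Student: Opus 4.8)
The plan is to use the Lyapunov–Schmidt finite-dimensional reduction, adapted to the nonlocal setting through the Caffarelli–Silvestre type extension (harmonic extension for the spectral Laplacian, $s$-harmonic extension to the half-space for the restricted one), which turns $\As$ into a local degenerate-elliptic operator on a cylinder $\Omega\times(0,\infty)$ (resp.\ $\R^{n+1}_+$) with a Neumann-type condition on $\Omega\times\{0\}$. First I would fix the ansatz: the approximate solution is $W=\sum_{i=1}^m PW_{\lambda_i,\xi_i}$, where $PW_{\lambda,\xi}$ is the projection of the bubble $w_{\lambda,\xi}$ onto the space of functions vanishing on $\Sigma$, obtained by subtracting a correction governed by $H$; one must first establish the sharp expansion $PW_{\lambda,\xi}=w_{\lambda,\xi}-\alpha_{n,s}\lambda^{(n-2s)/2}H(\cdot,\xi)+o(\lambda^{(n-2s)/2})$, which is where the Green and Robin functions enter. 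The scaling parameters are chosen as $\lambda_i=\mu_i\ep^{1/(n-2s)}$ (or the analogous power, with sign depending on whether $p=p^*-\ep$ or $p^*+\ep$) so that $\Lambda_i$ corresponds to $\mu_i$.

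Next I would set up the linearized operator around $W$ and invert it modulo the $(n+1)m$-dimensional kernel spanned by the $Z^i_0=\partial_\lambda PW_{\lambda_i,\xi_i}$ and $Z^i_k=\partial_{\xi_i^k}PW_{\lambda_i,\xi_i}$, $k=1,\dots,n$. The key analytic input here is the nondegeneracy of the bubble $w$ for $(-\Delta)^s$ in $\R^n$ (known from the literature cited, e.g.\ via the extension), which yields a uniform a priori estimate for the linearized problem in a suitable weighted $L^\infty$ norm — this estimate must be proved uniformly in $\ep$ and in the concentration parameters ranging over a compact set of admissible configurations. With the linear theory in hand, a contraction-mapping argument in the weighted norm produces, for each admissible $(\xi,\Lambda)$, a unique small solution $\phi=\phi_{\ep,\xi,\Lambda}$ of the projected nonlinear problem, together with $C^1$ dependence of $\phi$ on the parameters and the estimate $\|\phi\|_*=o(\ep^{\text{something}})$.

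Then comes the variational reduction: one shows that $W+\phi$ is a genuine solution of \eqref{1.1} if and only if $(\xi,\Lambda)$ is a critical point of the reduced functional $\ep\mapsto \mathcal{F}_\ep(\xi,\Lambda):=I_\ep(W_{\xi,\Lambda}+\phi)$, where $I_\ep$ is the energy associated to \eqref{1.1}. The crucial expansion is
\[
  I_\ep(W_{\xi,\Lambda}+\phi)=c_0+c_1\ep\bigl(\Psi(\xi,\Lambda)+o(1)\bigr)
\]
in $C^1$ on compact subsets of admissible parameters, for explicit constants $c_0,c_1$ with $c_1\ne 0$, where $\Psi$ is exactly the function in \eqref{Psi}; the $\pm\log(\Lambda_1\cdots\Lambda_m)$ term arises from the competition between the $\ep$-shift of the exponent and the self-interaction of each bubble, while the quadratic form in $H$ and $G$ comes from the projection corrections and the mutual interaction of distinct bubbles. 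Finally, invoking Definition~\ref{stable_critical_set}: since $\Psi$ has a stable critical set $\A$, the $C^1$-close function $\mathcal{F}_\ep/(c_1\ep)$ has, for $\ep$ small, a critical point near $\A$, which via the reduction produces the desired solution concentrating at the $\xi_i$ as $\ep\to 0$.

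The main obstacle I expect is establishing the linear theory and the energy expansion uniformly with respect to the nonlocal operator in a way that genuinely unifies the spectral and restricted cases: the two extensions live on different domains and have different boundary behavior (the regular part $H$ has different regularity near $\partial\Omega$, and the tail of the restricted bubble outside $\Omega$ contributes), so the delicate point is to isolate exactly what structural facts about $\varGamma$, $G$, $H$, and the extension kernel are needed — essentially the decay $\varGamma(x,\xi)\sim a_{n,s}|x-\xi|^{-(n-2s)}$, the nondegeneracy of $w$, and a regularity/comparison estimate for $H$ — and to phrase the whole reduction abstractly in terms of those, so that the same proof covers both operators. Controlling the error $\|\As W - W^p\|_*$ near $\Sigma$, where the projected bubble and its correction must cancel to the right order, is the technically heaviest piece.
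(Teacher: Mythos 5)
Your proposal is correct and follows essentially the same path as the paper: Caffarelli--Silvestre extension, projected multi-bubble ansatz with corrections governed by $H$ and $G$, linear theory via the nondegeneracy of the bubble and weighted $L^\infty$ estimates, contraction mapping for the projected problem, $C^1$ energy expansion $J_{\pm\ep}(\bar V + \Phi) = mC_{n,s} + [\gamma_{n,s} + \omega_{n,s}\Psi]\ep + o(\ep)$, and finally the stability of $\A$ to capture a genuine critical point of the reduced functional. The only cosmetic difference is that the paper rescales the domain to $\Omega_\ep = \ep^{-1/(n-2s)}\Omega$ and works with $O(1)$ bubbles there, rather than keeping $\Omega$ fixed and shrinking the bubble scale as you do, but this changes nothing substantive.
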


Actually, the proof will provide much finer information on the asymptotic profile of the blow up of these solutions as $\ep \to 0$. Up to a scaling and translation, the solutions look around each $\xi_i$ like a bubble, which is a solution in the entire $\R^n$ of the equation at the critical exponent. More precisely, we will find 
\begin{equation} \label{asymptotic_behavior}
  u_\ep(x) = b_{n,s} \sum_{i=1}^{m} \left(  \frac{ \beta_{n,s}^{1/2} \Lambda_{i\ep}^2 \ep^{1-\frac{1}{2s \pm \ep (n-2s/2)}}}{\left[ (\beta_{n,s} \Lambda_{i\ep} \ep)^{2/n-2s} + |x-\xi_{i\ep}|^2 \right]^{(n-2s)/2}}  \right) + \theta_\ep(x),
\end{equation}
where $\theta_\ep(x) \to 0$ uniformly as $\ep \to 0$, $\xi_{i\ep} \to \xi_i$, and $\Lambda_{i\ep} \to \Lambda_i$ up to subsequences. The positive constant $\beta_{n,s}$ will be defined in Section~3.

There is not a general method to find stable critical points of $\Psi$ in \eqref{Psi}. However, in some special domains depending on the criticality of the exponent $p$, we shall show how to find some of these points and then prove the existence of a concentrating family of solutions to \eqref{1.1}. 

\subsubsection*{The supercritical case, two-bubble solutions} In the supercritical case, i.e. $p = p^* + \ep$, if we look for two-bubble solutions ($m = 2$) to \eqref{1.1}, the criticality with respect to $\Lambda$ in \eqref{Psi} can be reduced and the following function will play a crucial role:
\begin{equation} \label{varphi_function}
\varphi(\xi_1,\xi_2) = H^{1/2}(\xi_1,\xi_1)H^{1/2}(\xi_2,\xi_2)-G(\xi_1,\xi_2).
\end{equation} 
A min-max argument shall be used to find suitable critical points of the previous function. We will show then a blowing up and concentration phenomenon at exactly two points $\xi_1,\xi_2$, as $\ep \to 0$, provided that the set where $\varphi < 0$ is ``topologically nontrivial'' in a sense specified below. The pair $(\xi_1,\xi_2)$ will be a critical point of $\varphi$ with $\varphi(\xi_1,\xi_2)<0$. 

Given $B \subset \Omega$, we will denote by $H^d(B)$ its $d$-th cohomology group with integral coefficients and by $\iota^*$ the homomorphism $\iota^* : H^d(\Omega) \to H^*(B)$, induced by the inclusion $\iota : B \to \Omega$.
 
\begin{theorem}\label{two-bubble_solutions}
Consider the supercritical case in problem \eqref{1.1}, i.e. $p = p^* + \ep$. Assume that $\Omega$ is a smooth bounded domain in $\R^n$ that satisfies the following property: There exist a compact manifold $\M \subset \Omega$ and an integer $d \geq 1$ such that, $\varphi < 0$ on $\M \times \M$, $\iota^* : H^d(\Omega) \to H^d(\M)$ is nontrivial and either $d$ is odd or $H^{2d}(\Omega) = 0$. Then there exists $\ep_0 > 0$ such that, for any $0 < \ep <\ep_0$, problem \eqref{1.1} has at least one solution $u_\ep$. Moreover, let $\mathcal{N}$ be the component of the set where $\varphi < 0$ which contains $\M \times \M$. Then, given any sequence $\ep = \ep_j \to 0$, there is a subsequence, which we denote in the same way, and a critical point $(\xi_1,\xi_2) \in \mathcal{N}$ of the function $\varphi$ such that $u_\ep \to 0$ on compact subsets of $\Omega \backslash \{\xi_1,\xi_2\}$ and such that for any $\delta > 0$
\[
  \sup_{|x-\xi_i| < \delta} u_\ep (x) \to \infty, \quad i=1,2,
\]
as $\ep \to 0$.
\end{theorem}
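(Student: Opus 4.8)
The plan is to combine the finite‑dimensional Lyapunov–Schmidt reduction that underlies Theorem~\ref{bubbling_solutions1}, specialized to $m=2$ and $p=p^{*}+\ep$, with a cohomological min–max argument for $\varphi$ in the spirit of Bahri–Coron and del Pino–Felmer–Musso. By the reduction, for $\ep$ small and positive the two‑bubble solutions of \eqref{1.1} correspond to critical points of a reduced energy $\Psi_{\ep}(\xi,\Lambda)$, $\xi=(\xi_{1},\xi_{2})\in\Omega^{2}$, $\Lambda=(\Lambda_{1},\Lambda_{2})\in(0,\infty)^{2}$, defined where the bubbles stay separated and away from $\partial\Omega$, with $\Psi_{\ep}\to\Psi$ in $C^{1}_{\mathrm{loc}}$ as $\ep\to 0$, $\Psi$ the function in \eqref{Psi} with the $+\log$ sign. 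The first step is to eliminate the scalings: for fixed $\xi$ with $\varphi(\xi_{1},\xi_{2})<0$ the equation $\nabla_{\Lambda}\Psi(\xi,\cdot)=0$ has a unique solution $\Lambda(\xi)\in(0,\infty)^{2}$, explicit in terms of $H(\xi_{i},\xi_{i})$ and $\varphi$, which is a nondegenerate critical point depending smoothly on $\xi$ (so the implicit function theorem applies), and a direct computation gives $\Lambda_{1}(\xi)\Lambda_{2}(\xi)=-1/\varphi(\xi_{1},\xi_{2})$ and
\[
  \bar\Psi(\xi):=\Psi(\xi,\Lambda(\xi))=-1-\log\bigl(-\varphi(\xi_{1},\xi_{2})\bigr).
\]
Thus on $\{\varphi<0\}$ the reduced function $\bar\Psi$ is a strictly increasing function of $\varphi$, so $\nabla\bar\Psi(\xi)=0\iff\nabla\varphi(\xi)=0$; moreover $\bar\Psi\to+\infty$ as $\varphi\to0^{-}$ and $\bar\Psi\to-\infty$ as $\xi$ approaches the diagonal $\Delta=\{\xi_{1}=\xi_{2}\}$ (where $G\to\infty$, so $\varphi\to-\infty$). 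Performing the same elimination on $\Psi_{\ep}$ yields $\Lambda_{\ep}(\xi)$ and $\tilde\Psi_{\ep}(\xi)\to\bar\Psi(\xi)$ in $C^{1}_{\mathrm{loc}}$ on $\{\varphi<0\}$; any critical point of $\tilde\Psi_{\ep}$ lifts to one of $\Psi_{\ep}$, hence to a solution $u_{\ep}$ of \eqref{1.1} of the form \eqref{asymptotic_behavior} with parameters $(\xi,\Lambda_{\ep}(\xi))$.

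The heart of the argument is then purely topological: producing a critical point of $\varphi$ in the component $\mathcal{N}$ of $\{\varphi<0\}$ containing $\M\times\M$, at a level in $(-\infty,0)$. Fix $\alpha\in H^{d}(\Omega)$ with $\iota^{*}\alpha\neq 0$ in $H^{d}(\M)$ and set $\tau=\pi_{1}^{*}\alpha\smile\pi_{2}^{*}\alpha\in H^{2d}(\Omega\times\Omega)$, $\pi_{i}$ the projections. By Künneth its restriction to $\M\times\M$ is $(\iota^{*}\alpha)\times(\iota^{*}\alpha)\neq 0$, so, since $\M\times\M\subset\mathcal{N}$, the class $\tau$ restricts nontrivially to $\mathcal{N}$. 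Replacing $\mathcal{N}$ by a harmless truncation $\mathcal{N}_{\rho}=\mathcal{N}\cap\{\dist(\xi_{i},\partial\Omega)>\rho\}$ with $\M\times\M\subset\mathcal{N}_{\rho}$, the function $\varphi$ is proper at every level in $(-\infty,0)$ on $\mathcal{N}_{\rho}$: the only escape is toward $\Delta$, where $\varphi\to-\infty$. Define
\[
  c=\sup\bigl\{\,t<0\ :\ \tau|_{\{\varphi<t\}\cap\mathcal{N}_{\rho}}=0\ \text{in}\ H^{2d}\,\bigr\}.
\]
Since $\max_{\M\times\M}\varphi=-\eta<0$ for some $\eta>0$, $\{\varphi<t\}\cap\mathcal{N}_{\rho}$ contains $\M\times\M$ for $t\in(-\eta,0)$, so $\tau$ is nontrivial there and $c\le-\eta<0$. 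On the other hand, for $t\ll 0$ the set $\{\varphi<t\}\cap\mathcal{N}_{\rho}$ lies in an arbitrarily small punctured neighborhood of $\Delta$, which deformation retracts onto the unit normal sphere bundle of $\Delta$, on which $\tau$ restricts to the pullback of $\alpha\smile\alpha\in H^{2d}(\Omega)$; the hypothesis ``$d$ odd or $H^{2d}(\Omega)=0$'' forces this to vanish (trivially when $H^{2d}(\Omega)=0$; when $d$ is odd one uses $\alpha\smile\alpha=-\alpha\smile\alpha$ together with the swap $\xi_{1}\leftrightarrow\xi_{2}$, which acts antipodally on the $S^{n-1}$‑fibers, after passing to appropriate coefficients). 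Hence $\tau$ is trivial on $\{\varphi<t\}\cap\mathcal{N}_{\rho}$ for $t\ll 0$, so $c>-\infty$.

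Next I would show that $c$ is a critical value of $\varphi$ on $\mathcal{N}_{\rho}$. If not, then for small $\delta>0$ there are no critical points in the slice $\{c-\delta\le\varphi\le c+\delta\}\cap\overline{\mathcal{N}_{\rho}}$, which is compact by the properness above; the negative gradient flow of $\varphi$ stays in $\mathcal{N}_{\rho}$ (it decreases $\varphi$, hence cannot cross $\{\varphi=0\}$, and preserves the component) and deformation retracts $\{\varphi<c+\delta\}\cap\mathcal{N}_{\rho}$ onto $\{\varphi<c-\delta\}\cap\mathcal{N}_{\rho}$, on which $\tau$ is respectively nontrivial ($c+\delta>c$) and trivial ($c-\delta<c$), a contradiction. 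So $\varphi$ has a critical point in $\mathcal{N}$ at the finite level $c<0$, in particular a genuine two‑point configuration. Running the same min–max for $\tilde\Psi_{\ep}$ on a precompact subset of $\mathcal{N}_{\rho}$ bounded away from $\Delta$, where $\tilde\Psi_{\ep}$ is defined and $\to\bar\Psi$ in $C^{1}$ (legitimate for $\ep$ small, $\bar\Psi$ being a monotone reparametrization of $\varphi$ and the level $-1-\log(-c)$ finite), yields for each small $\ep$ a critical point $\xi_{\ep}$ of $\tilde\Psi_{\ep}$ with $\tilde\Psi_{\ep}(\xi_{\ep})\to-1-\log(-c)$, hence $\xi_{\ep}$ confined to a fixed compact subset of $\mathcal{N}$. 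Lifting through $\Lambda_{\ep}$ and the reduction gives a solution $u_{\ep}$ of \eqref{1.1} of the form \eqref{asymptotic_behavior}; for $\ep_{j}\to 0$, compactness gives a subsequence with $\xi_{\ep_{j}}\to(\xi_{1},\xi_{2})\in\mathcal{N}$, and passing to the limit in $\nabla\tilde\Psi_{\ep_{j}}(\xi_{\ep_{j}})=0$ gives $\nabla\varphi(\xi_{1},\xi_{2})=0$ with $\varphi(\xi_{1},\xi_{2})<0$; then \eqref{asymptotic_behavior} yields $u_{\ep_{j}}\to0$ uniformly on compact subsets of $\Omega\setminus\{\xi_{1},\xi_{2}\}$ and $\sup_{|x-\xi_{i}|<\delta}u_{\ep_{j}}\to\infty$, $i=1,2$.

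The main obstacle is the topological step, and within it the control near $\Delta$: one must verify that $\varphi$ is proper at negative levels on the chosen component (so the deformation lemma works on this incomplete manifold) and, above all, that the min–max level $c$ is trapped strictly between $-\infty$ and $0$ — this is exactly where ``$\varphi<0$ on $\M\times\M$'' with ``$\iota^{*}$ nontrivial'' (keeping $c<0$) and ``$d$ odd or $H^{2d}(\Omega)=0$'' (keeping $c>-\infty$, i.e.\ forbidding the two concentration points to merge) enter. A secondary, technical point is making the elimination of $\Lambda$ and the convergence $\tilde\Psi_{\ep}\to\bar\Psi$ uniform in $\ep$ on the relevant region, which is where the estimates behind Theorem~\ref{bubbling_solutions1} are used; alternatively, one can check directly that the min–max produces a stable critical set of $\Psi$ in the sense of Definition~\ref{stable_critical_set} and invoke Theorem~\ref{bubbling_solutions1}.
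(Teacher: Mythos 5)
Your $\Lambda$--elimination is correct; in fact $\bar\Psi(\xi)=-1-\log(-\varphi(\xi_1,\xi_2))$ is what the paper's explicit $\Lambda(\xi)$ actually gives (multiplying $\partial_{\Lambda_i}\Psi=0$ by $\Lambda_i$ and adding yields $Q(\Lambda(\xi))=-2$, not $-1$; the paper's $-\tfrac12+\log\tfrac1{|\varphi|}$ contains a factor-of-two slip, which is harmless since only monotonicity in $\varphi$ is used). The topological step is where you genuinely diverge from the paper: the paper keeps $(\xi,\Lambda)$ together, truncates $G$ to get a globally defined $\Psi_{M,\rho}$, sets up a min--max class $\Z$ of homotopies $\M^2\times I\times[0,1]\to D$, and cites the Fitzpatrick--Massab\`o--Pejsachowicz continuation theorem (through Corollary~7.1 of \cite{dPFeMu2003}) to bound $c(\Omega)$ from below. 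You instead run a direct cohomological linking argument for $\varphi$ via $\tau=\pi_1^*\alpha\smile\pi_2^*\alpha$ and $c=\sup\{t<0:\tau|_{\{\varphi<t\}\cap\mathcal N_\rho}=0\}$. This reformulation is attractive, but the step where you kill $\tau$ near the diagonal has a gap in the $d$-odd case.

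Concretely, you need $\tau$ to restrict to zero on the normal sphere bundle $S\to\Delta\cong\Omega$, and you correctly identify $\tau|_S=p^*(\alpha\smile\alpha)$. Since the normal bundle of $\Delta$ in $\Omega\times\Omega$ is isomorphic to $T\Omega$, which is trivial because $\Omega\subset\R^n$, we have $S\cong\Omega\times S^{n-1}$, and by K\"unneth the pullback $p^*:H^{2d}(\Omega;\mathbb Z)\to H^{2d}(S;\mathbb Z)$ is \emph{injective}. Hence $\tau|_S=0$ if and only if $\alpha\smile\alpha=0$ in $H^{2d}(\Omega;\mathbb Z)$. For $H^{2d}(\Omega)=0$ this is immediate, but for $d$ odd the identity $\alpha\smile\alpha=-\alpha\smile\alpha$ only shows $\alpha\smile\alpha$ is $2$-torsion, and the swap/antipodal symmetry you invoke gives nothing more: the antipodal map acts trivially on $p^*H^{2d}(\Omega)\subset H^{2d}(S)$, so combining $\sigma^*\tau=-\tau$ with $\sigma^*|_{p^*H^{2d}}=\mathrm{id}$ again only yields $2\,\tau|_S=0$. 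If $H^{2d}(\Omega;\mathbb Z)$ has $2$-torsion and $\alpha\smile\alpha$ is a nonzero $2$-torsion element, your argument does not show $c>-\infty$. ``Passing to appropriate coefficients'' does not obviously repair this: over $\mathbb Q$ one may lose $\iota^*\alpha\neq0$, and over $\mathbb Z_2$ the class $\alpha\smile\alpha$ need not vanish. The degree-theoretic FMP continuation is precisely what handles the $d$-odd case in the sources; your cohomological shortcut would need a sharper argument, or the stronger hypothesis $\alpha\smile\alpha=0$.

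A secondary gap: the claim that ``the negative gradient flow of $\varphi$ stays in $\mathcal N_\rho$'' is not automatic. Decreasing $\varphi$ forbids crossing $\{\varphi=0\}$ but does not prevent escape through $\partial(\Omega_\rho\times\Omega_\rho)$. The nontangency of $\nabla\varphi$ along that boundary at negative levels of $\varphi$ is exactly the content of Lemma~\ref{flow_closed}, and is what allows one to build a modified pseudo-gradient flow that remains in $\Omega_\rho\times\Omega_\rho$; you need to invoke it (or a substitute) at this point of the deformation argument.
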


The asymptotic profile of the blow up of the solutions is like \eqref{asymptotic_behavior}, but this time one can identify the limits as
\[
\Lambda_1^2 = -\frac{H(\xi_2,\xi_2)^{1/2}}{H(\xi_1,\xi_1)^{1/2}\varphi(\xi_1,\xi_2)}, \quad \Lambda_2^2 = -\frac{H(\xi_1,\xi_1)^{1/2}}{H(\xi_2,\xi_2)^{1/2}\varphi(\xi_1,\xi_2)}.
\]

To clarify the meaning of Theorem~\ref{two-bubble_solutions}, we mention two examples under the scope of this result. The first one is a domain $\D$ with an excised subdomain $\omega$ contained in a ball of sufficiently small radius. The second example is a domain $\D \subset \R^3$ from which one takes away a solid torus with sufficiently small cross-section. For more details we refer the reader to \cite{dPFeMu2003}. 

\subsubsection*{The subcritical case, one-bubble solutions} In the subcritical case, i.e. $p = p^* - \ep$, if we look for one-bubble solutions ($m=1$) to \eqref{1.1}, the function $\Psi$ in \eqref{Psi} takes the simple form
\[
  \Psi(\xi, \Lambda)= \frac{1}{2} H(\xi,\xi) \Lambda^2 - \log \Lambda, \quad \xi \in \Omega, \Lambda>0.
\]
$H(\xi,\xi)$ is called the Robin's function of $\Omega$. In Section~6 we will show that 
\begin{equation*}
  c_1 d(\xi)^{2s-n} \le H(\xi,\xi) \le c_2 d(\xi)^{2s-n} \quad \text{for all } \xi \in \Omega,
\end{equation*}
where $d(\xi) := \dist(\xi, \partial \Omega)$ and $c_1,c_2>0$, see Lemma~\ref{robin_function_lemma}.
Therefore $H(\xi,\xi)$ blows up at the boundary, which implies that its absolute minima are stable under small variations of it.  

\begin{theorem}\label{theorem2}
Consider the subcritical case in problem \eqref{1.1}, that is $p = p^*-\ep$. Then, there exists a family of solutions which blow up and concentrate, as $\ep$ tends to zero, at an absolute minimum of the Robin's function of $\Omega$.
\end{theorem}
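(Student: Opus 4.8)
The plan is to exhibit a stable critical set for $\Psi$ in the sense of Definition~\ref{stable_critical_set} (with $\Omega$ replaced by $\Omega\times(0,\infty)$, as is implicit in the statement of Theorem~\ref{bubbling_solutions1}) and then to quote Theorem~\ref{bubbling_solutions1} with $m=1$. In the subcritical regime $p=p^*-\ep$ one has the minus sign in \eqref{Psi}, so for a single bubble
\[
  \Psi(\xi,\Lambda)=\tfrac12 H(\xi,\xi)\,\Lambda^2-\log\Lambda,\qquad \xi\in\Omega,\ \Lambda>0.
\]
I claim that the set $\A$ of global minimizers of $\Psi$ on $\Omega\times(0,\infty)$ is a nonempty compact stable critical set whose projection onto the $\xi$-variable is exactly the set of absolute minima of the Robin function $\xi\mapsto H(\xi,\xi)$; granting this, Theorem~\ref{bubbling_solutions1} immediately yields Theorem~\ref{theorem2}.

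First I would minimize partially in $\Lambda$. For each fixed $\xi$ the map $\Lambda\mapsto\tfrac12 H(\xi,\xi)\Lambda^2-\log\Lambda$ is strictly convex and tends to $+\infty$ both as $\Lambda\to0^+$ and as $\Lambda\to+\infty$; its unique minimum is at $\Lambda(\xi)=H(\xi,\xi)^{-1/2}$, with value $\tfrac12(1+\log H(\xi,\xi))$. Hence $\min_{\Lambda>0}\Psi(\xi,\Lambda)$ is a strictly increasing function of $H(\xi,\xi)$, so minimizing $\Psi$ jointly is equivalent to minimizing the Robin function. By Lemma~\ref{robin_function_lemma} one has $c_1 d(\xi)^{2s-n}\le H(\xi,\xi)\le c_2 d(\xi)^{2s-n}$ with $n>2s$, so $H(\xi,\xi)\to+\infty$ as $\xi\to\partial\Omega$; thus the continuous --- indeed $C^1$, by the regularity of the regular part $H$ inside $\Omega$ --- function $\xi\mapsto H(\xi,\xi)$ attains its infimum $m_0:=\inf_\Omega H(\xi,\xi)>0$ on a nonempty compact set $\Z\subset\Omega$. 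Set $\A=\{(\xi,m_0^{-1/2}):\xi\in\Z\}$. Then $\A$ is exactly the set of global minima of $\Psi$, it is a compact subset of the open set $\Omega\times(0,\infty)$, and every point of $\A$ is a critical point of $\Psi$: $\partial_\Lambda\Psi=0$ by the choice of $\Lambda$, while $\partial_\xi\Psi=\tfrac12\Lambda^2\,\nabla_\xi H(\xi,\xi)=0$ since $\xi\in\Z$ is an interior minimum of $H(\cdot,\cdot)$.

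It remains to verify stability, and the key point is that $\Psi$ is proper on $\Omega\times(0,\infty)$: along any sequence with $\Psi(\xi_k,\Lambda_k)$ bounded, the bound $-\log\Lambda_k\le C$ forces $\Lambda_k$ bounded below, the bound $\tfrac12 H(\xi_k,\xi_k)\Lambda_k^2\le C$ forces $\Lambda_k$ bounded above (recall $H\ge m_0>0$), and $\Psi(\xi_k,\Lambda_k)\ge\tfrac12(1+\log H(\xi_k,\xi_k))$ then forces $\xi_k$ to stay in a compact subset of $\Omega$; so every sublevel set of $\Psi$ is compact in $\Omega\times(0,\infty)$. Fix $\mu>0$ small enough that $\overline{B_\mu(\A)}:=\{z:\dist(z,\A)\le\mu\}$ is a compact subset of $\Omega\times(0,\infty)$. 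On the compact set $S_\mu:=\{z:\dist(z,\A)=\mu\}$, which is disjoint from the minimum set $\A$, we have $\Psi\ge c_0+3\eta$ for some $\eta>0$, where $c_0:=\tfrac12(1+\log m_0)=\min\Psi$. Take $\delta=\eta$: if $\Phi\in C^1$ satisfies $\max_{\overline{B_\mu(\A)}}\bigl(|\Psi-\Phi|+|\nabla\Psi-\nabla\Phi|\bigr)<\delta$, then $\Phi\ge c_0+2\eta$ on $S_\mu$ whereas $\Phi\le c_0+\eta$ at the points of $\A$; hence the minimum of $\Phi$ over the compact set $\overline{B_\mu(\A)}$ is attained at an interior point, which is a critical point of $\Phi$ at distance $<\mu$ from $\A$. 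This is precisely the requirement of Definition~\ref{stable_critical_set}, so $\A$ is a stable critical set. Applying Theorem~\ref{bubbling_solutions1} to a point $(\xi_0,m_0^{-1/2})\in\A$ with $\xi_0\in\Z$ produces a family of solutions of \eqref{1.1} blowing up and concentrating at $\xi_0$ as $\ep\to0$, and $\xi_0$ is an absolute minimum of the Robin function; this is Theorem~\ref{theorem2}.

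I expect the only genuinely delicate ingredient to be the regularity of the Robin function $\xi\mapsto H(\xi,\xi)$ inside $\Omega$: some regularity ($C^1$ suffices) is needed both to speak of critical points of $\Psi$ and to conclude that an interior minimizer of the Robin function really annihilates $\nabla_\xi H(\xi,\xi)$, and this rests on the fine behavior of the regular part $H$ near the diagonal supplied by the analysis of Section~6. Everything else is soft: the partial minimization in $\Lambda$ is explicit, and the stability argument is the classical fact that the global minimum set of a proper function survives $C^1$-small perturbations.
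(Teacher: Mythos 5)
Your proof is correct and follows essentially the same route as the paper: partial minimization in $\Lambda$ reduces the problem to minimizing the Robin function, whose blow-up at $\partial\Omega$ (Lemma~\ref{robin_function_lemma}) makes its minimum set a compact, stable critical set of $\Psi$, and then the reduction machinery (Theorem~\ref{bubbling_solutions1}, i.e.\ Lemma~\ref{reduction} together with \eqref{reduced_energy_gradient}) yields the concentrating family. You have merely spelled out the properness and stability verification that the paper states without detail, and your caveat about interior $C^1$ regularity of $\xi\mapsto H(\xi,\xi)$ is precisely what the paper takes for granted when it appeals to ``the smoothness of $H$ in $\Omega$''.
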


The paper is organized as follows. 
In order to keep it easy to read, we have chosen to concentrate in Sections 2--6 on the results dealing with the {\em spectral fractional Laplacian}. In Section 7, the details which have to be changed in the theory from the spectral fractional Laplacian to the {\em restricted fractional Laplacian} will be explained. We refer the reader to the paper \cite{BSV} where a thorough analysis of the differences between the spectral fractional Laplacian and the restricted fractional Laplacian is performed; in particular, as far as their domains are concerned and several other properties of their eigen-elements. In Section~2 we recall the definition and basic properties of the fractional Laplacian in bounded domains and in the whole $\R^n$. In Section~3 we shall develop the analytical tools toward the main results. We study the linearization around special entire solutions of \eqref{1.2}; an initial approximation will be done as well. Section~4 and 5 contain the reduction to a finite dimensional functional and its relation with the original problem \eqref{1.1}; these sections contain the final tools to prove Theorem~\ref{bubbling_solutions1}--\ref{theorem2} in Section~6. Finally, in Section~7 we complete the proof of the previous theorems by studying the corresponding properties for the {\em restricted fractional Laplacian}.


\section{Preliminary results }

In this section we recall some basic properties of the spectral fractional Laplacian. The notations used throughout this paper are settled down as well.

On a smooth bounded domain $\Omega$, we consider
\[
  (-\Delta_\Omega)^s = \sum_{i=1}^\infty \lambda_i^s P_i
\]
where $\{\lambda_i,\phi_i\}_{i=1}^\infty$ are the eigenvalues and corresponding eigenvectors of $-\Delta_\Omega$ on $H_0^1(\Omega)$ and $P_i$ is the orthogonal projection on the eigenspace corresponding to $\lambda_i$. Denote 
$$
H(\Omega)=\{u=\sum_{i=1}^{\infty} a_i\phi_i \in L^2(\Omega): \sum_{i=1}^{\infty} a_i^2\lambda_i^{s} < \infty\}.
$$
The operator $(-\Delta_\Omega)^s$ is an isomorphism between $H(\Omega)$ and its dual. This space can be characterized more explicitly, see \cite{BSV,CaDaDuSi2011}.

As it is now well-known, the Caffarelli-Silvestre extension \cite{CaSi2007} provides a powerful tool to handle problems (and do computations) involving nonlocal operators modeled on the fractional laplacian, which is our case here. We now describe this extension in our context (see \cite{CaTa2010,Ta2011,CaDaDuSi2011}). These two description are actually equivalent once a suitable functional setting is defined.

The extension problem is set in the cylinder $\Omega\times (0,\infty)$ and it will be convenient to use the following notation: $x \in \R^n$, $y>0$, and $X = (x,y) \in \R^n_+ := \R^n \times (0,\infty)$; likewise, we denote by $\C$ the cylinder $\Omega \times (0,\infty)$ and by $\partial_L \C$ its lateral boundary, i.e. $\partial \Omega \times (0,\infty)$. The ambient space $H_{0,L}^s(\C)$ is defined as the completion of
\[
  C_{0,L}^s(\C):=\{U\in C^\infty(\overline{\C}): U=0 \text{ on }  \partial_L \C\}
\] 
with respect to the norm
\begin{equation} \label{1.3}
  \|U\|_\C=\left(\int_{\C}y^{1-2s}|\nabla U|^2\right)^{1/2}.
\end{equation}
This is a Hilbert space endowed with the following inner product
\[
  \left< U,V \right>=\int_{\C}y^{1-2s}\nabla U \cdot \nabla V \quad \text{for all } U,V \in H_{0,L}^s(\C).
\]
In the entire space, we denote by $\D^s(\R^{n+1}_+)$ the completion of $C_0^{\infty}(\overline{\R^{n+1}_+})$ with respect to the norm $\|\cdot\|_{\R^{n+1}_+}$ defined as in \eqref{1.3}. We point out that if $\Omega$ is a smooth bounded domain then
\[
  H(\Omega)=\{u=\tr |_{\Omega\times \{0\}}U: U\in H_{0,L}^s(\C)\}.
\]
The extension problem is the following: given $u \in H(\Omega)$, we solve
\begin{equation} \label{extension}
  \left\{
    \begin{aligned}
       \divr( y^{1-2s} & \nabla U ) = 0 && \text{in } \C, \\
       U &= 0 && \text{on } \partial_L \C, \\
       U &= u && \text{on } \Omega,
    \end{aligned}
  \right.
\end{equation}
for $U\in H_{0,L}^s(\C)$, where divergence and $\nabla$ are operators acting on all variables $X=(x,y)$.
Then, up to a multiplicative constant,
\begin{equation} \label{extension-b}
(-\Delta_\Omega)^s u  = - \lim_{y \to 0} y^{1-2s} \partial_y U,
\end{equation}
where $c = c(n,s)>0$ (see \cite{CaSi2007} and \cite{CaDaDuSi2011} for the entire and bounded domain case, respectively).

Regarding this extension procedure, the Green function defined in \eqref{greenfun} can be seen, up to a positive constant, as the trace of the solution $G$ for the following extended Dirichlet-Neumann problem
\begin{equation}\label{greens_function}
  \left\{
    \begin{aligned}
       \divr( y^{1-2s} \nabla  G(\cdot,\xi)) &= 0 && \text{in } \C, \\
       G(\cdot,\xi) &= 0 && \text{on } \partial_L \C, \\
       -\lim_{y\to0} y^{1-2s} \partial_y G(\cdot,\xi) &= \delta_\xi(\cdot) && \text{on } \Omega,
    \end{aligned}
  \right.
\end{equation}
$\xi \in \Omega$ (we denote the Green function, as well as its extension, by $G$). Moreover, we have the following representation formula
\begin{equation} \label{greenformula}
U(z)=\int_{\Omega}G(z,\xi)(-\Delta_\Omega)^su(\xi) \, \di \xi \quad \text{for all } z \in \C,
\end{equation}
where $u=\tr |_{\Omega \times \{0\}}U$. Likewise, the regular part of the Green function defined in \eqref{regularpart} can be extended in $H_{0,L}^s(\C)$ as the unique solution of
\begin{equation}\label{regular-extension}
  \left\{
    \begin{aligned}
       & \divr( y^{1-2s} \nabla  H(z,\xi)) = 0, && z \in \C, \\
       & H(z,\xi) = \varGamma(z-\xi), && z \in \partial_L \C, \\
       & \lim_{y\to0} y^{1-2s} \partial_y H(z,\xi) = 0, && z \in \Omega,
    \end{aligned}
  \right.
\end{equation}
$\xi \in \Omega$ (we denote the regular part of the Green function, as well as its extension, by $H$).

In the next sections, given a a function $u \in H(\Omega)$, when we speak of its $s$-harmonic extension to $\Omega\times(0,\infty)$ we will always refer to the solution of \eqref{extension}. This extension process depends on the domain, and we include the possibility that the domain is $\R^n$, in which case $U$ can be written as a convolution of $u$ and an explicit kernel
\begin{equation}\label{conv}
U(x,y) = \int_{\R^n} P(x-t,y) u(t) \, \di t
\end{equation}
where 
$$
P(x,y) = C_{n,s} \frac{y^{2s}}{ (|x|^2 +y^2)^{\frac{n+2s}{2}}}
$$
(see \cite{CaSi2007}). Then, the $s$-harmonic extension of the fundamental solution \eqref{Gamma} to $\R^n_+ := \R^n \times (0,\infty)$ is given just by 
\[
  \varGamma(z_1,z_2) = \frac{a_{n,s}}{|z_1-z_2|^{n-2s}} \quad\text{for } z_1,z_2 \in \R^n_+, \ z_1\not= z_2.
\]

We end this section with the folllowing maximum principle.
\begin{lemma}[Maximum principle] \label{maximum_principle}
Suppose that $U$ is a weak solution of the problem
\begin{equation*}
  \left\{
    \begin{aligned}
       \divr( y^{1-2s} \nabla  U) &= 0 && \text{in } \C, \\
       U &= g && \text{on } \partial_L \Omega, \\
       \lim_{y \to 0} y^{1-2s} \partial_y U &= 0 && \text{on } \Omega,
    \end{aligned}
  \right.
\end{equation*}
for some function $g:\partial_L \Omega \rightarrow \R$. Then
\[
  \sup_{z \in \C} |U(z)| \leq \sup_{z \in \partial_L \C}|g(z)|.
\]
\end{lemma}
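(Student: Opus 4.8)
The plan is to prove the maximum principle via a standard Stampacchia-type truncation argument adapted to the degenerate weight $y^{1-2s}$, exploiting that $1-2s \in (-1,1)$ is an admissible Muckenhoupt $A_2$ weight so that the usual machinery of weighted Sobolev spaces applies. First I would reduce to showing the one-sided bound $\sup_{\C} U \le \sup_{\partial_L \C} g$; the lower bound follows by applying this to $-U$, and combining the two gives the stated inequality with absolute values. Set $M := \sup_{z \in \partial_L \C} g(z)$, which we may assume finite (otherwise there is nothing to prove), and consider the truncated function $w := (U - M)^+$. Since $U = g \le M$ on $\partial_L \C$ in the trace sense, we have $w \in H_{0,L}^s(\C)$, so $w$ is an admissible test function in the weak formulation of the equation.

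The key computation is to test the weak form $\int_{\C} y^{1-2s} \nabla U \cdot \nabla \psi \, dX = 0$ — valid for all $\psi \in H_{0,L}^s(\C)$ because the Neumann data $\lim_{y\to 0} y^{1-2s}\partial_y U = 0$ vanishes on $\Omega \times \{0\}$ — against $\psi = w$. On the set $\{U > M\}$ we have $\nabla w = \nabla U$, and elsewhere $\nabla w = 0$, so the identity collapses to
\[
  \int_{\C} y^{1-2s} |\nabla w|^2 \, dX = 0.
\]
Because $y^{1-2s} > 0$ a.e.\ in $\C$, this forces $\nabla w = 0$ a.e., hence $w$ is constant on each connected component of $\C$. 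Since $\C = \Omega \times (0,\infty)$ is connected (as $\Omega$ is a domain) and $w$ vanishes on the lateral boundary $\partial_L \C$ in the trace sense, the only possibility is $w \equiv 0$, i.e.\ $U \le M$ a.e.\ in $\C$. A routine regularity remark — solutions of the degenerate equation are continuous in $\overline{\C} \setminus (\partial\Omega \times \{0\})$, or one simply works with the essential supremum throughout — upgrades this to the pointwise statement $\sup_{z \in \C} U(z) \le M$.

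The main obstacle, and the only point requiring care, is the justification that $w = (U-M)^+$ genuinely lies in $H_{0,L}^s(\C)$ and is therefore a legitimate test function: this needs that truncation is a bounded operation on the weighted space (which follows from $\nabla w = \mathbf{1}_{\{U>M\}} \nabla U$ and dominated convergence) and that the boundary condition $U \le M$ on $\partial_L \C$ passes to the trace of $w$. One must also confirm that the weak formulation of the mixed Dirichlet--Neumann problem does allow test functions that merely vanish on $\partial_L \C$ (not necessarily on all of $\partial \C$), which is precisely the natural variational formulation associated with the vanishing Neumann condition on $\Omega \times \{0\}$; this is consistent with the definition of $H_{0,L}^s(\C)$ given above and with the extension problem \eqref{extension}. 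Granting these standard facts about $A_2$-weighted Sobolev spaces, the argument above is complete.
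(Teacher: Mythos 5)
Your proof is correct and follows essentially the same Stampacchia-type truncation argument as the paper: the paper tests the weak formulation against the positive part of a suitable shifted solution and deduces $\int_\C y^{1-2s}|\nabla V^+|^2 = 0$, exactly as you do with $w = (U-M)^+$. Your version is in fact cleaner — the paper works with $V = \bar U - U$ and asserts $V^+ = 0$ on $\partial_L\C$, which contains a small sign slip (it should be $V^-$), whereas your choice of truncation avoids this.
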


\begin{proof}
Let $\bar U = \sup_{z \in \partial_L \C} g(z)$, and consider the function $V(z) = \bar U - U(z)$ which satisfies
\begin{equation*}
  \left\{
    \begin{aligned}
       \divr( y^{1-2s} \nabla  V) &= 0 && \text{in } \C, \\
       V & \geq 0  &&                   \text{on } \partial_L \Omega, \\
       \lim_{y \to 0} y^{1-2s} \partial_y V &= 0 && \text{on } \Omega.
    \end{aligned}
  \right.
\end{equation*}
Note that $V^+=0$ on $\partial_L \Omega$. Then, we deduce that
\[
 0 = \int_{\C} y^{1-2s} \nabla V \cdot \nabla V^+ = -\int_{\C} y^{1-2s} |\nabla V^+|^2. 
\] 
It implies that $V^+ = 0$, and then $U \leq \bar U$ in $\C$. By a similar argument, we can deduce that $\inf_{z \in \partial_L \C} g(z) \leq U$ in $\C$, which completes the proof.
\end{proof}


\section{Initial approximation and reduced energy}

Let $\Omega$ be a bounded domain with smooth boundary in $\R^n$. It will be convenient to work with the enlarged domain 
\[
  \Omega_\ep = \ep ^{-\frac{1}{n-2s}}\Omega,
\] 
$\ep > 0$ small, that, after the change of variables 
\[
  v(x) = \ep ^{\frac{1}{2s \pm \ep (n-2s/2)}}u(\ep ^{\frac{1}{n-2s}}x), \quad x \in \Omega_\ep,
\]
transforms equation \eqref{1.1} into
\begin{equation}\label{main_equation}
  \left\{
    \begin{aligned}
      (-\Delta_{\Omega_\ep})^s v & = v^{p^* \pm \ep}, \  v >  0 \quad \text{in }\Omega_\ep, \\ 
      v & = 0 \quad \text{on } \partial \Omega_\ep
  \end{aligned}
\right.
\end{equation}
(recall that $p^*:=(n+2s)/(n-2s)$).

As $\ep > 0$ is small, we shall develop an initial approximation based on solutions of the equation
\begin{equation} \label{entire-solutions}
 (-\Delta)^{s}\, v = v^{p^*} \quad \text{in } \R^n.
\end{equation}
Specifically, the family generated by
\[
 w(x) = \frac{b_{n,s} }{(1+|x|^2)^{\frac{n-2s}{2}}}
\]
in the following way:
\begin{equation} \label{w_definition}
  w_{\lambda,\xi}(x) = \lambda^{-\frac{n-2s}{2}} w(\lambda^{-1}(x-\xi)) = b_{n,s} \left( \frac{\lambda}{\lambda^2 + |x-\xi|^2} \right)^{\frac{n-2s}{2}},
\end{equation}
with $\lambda>0$ and $\xi \in \R^n$. Here $b_{n,s}$ is a positive constant (see \cite{chLiOu2006} for classification results). 

Let $W_{\lambda,\xi}$ denote the $s$-harmonic extension of $w_{\lambda,\xi}$ to $\R^{n+1}_+$ given by the formula \eqref{conv}, so that $W_{\lambda,\xi}$ satisfies
\begin{equation} \label{w-extension}
  \left\{
    \begin{gathered}
    \divr(y^{1-2s} \nabla  W_{\lambda,\xi} ) = 0 \quad \text{in } \R^{n+1}_+, \\
    W_{\lambda,\xi} = w_{\lambda,\xi} \quad \text{on } \R^n.
    \end{gathered}
  \right.
\end{equation}
To deal with the zero Dirichlet condition in \eqref{1.1}, we introduce the function $v_{\lambda,\xi}$ to be the $H(\Omega_\ep)$-projection of $w_{\lambda,\xi}$, namely the unique solution of the equation 
\begin{equation}\label{v_i}
  \left\{
    \begin{gathered}
      (-\Delta_{\Omega_\ep} )^s v_{\lambda,\xi} = w_{\lambda,\xi}^{p^*} \quad \text{in } \Omega_\ep, \\
      v_{\lambda,\xi} = 0 \quad \text{on } \partial\Omega_\ep.
    \end{gathered}
  \right.
\end{equation}
The functions $v_{\lambda,\xi}$ can be expressed as
\[
  v_{\lambda,\xi} = w_{\lambda,\xi} - \varphi_{\lambda,\xi} \quad \text{in } \Omega_\ep,
\]
where $\varphi_{\lambda,\xi}$ is the trace on $\Omega_\ep$ of the unique solution $\varPhi_{\lambda,\xi}$ of 
\begin{equation} \label{phi-extension}
  \left\{
    \begin{aligned}
      \divr(y^{1-2s}\nabla  \varPhi_{\lambda,\xi})  &= 0 && \text{in } \C_\ep, \\
      \varPhi_{\lambda,\xi} &= W_{\lambda,\xi} && \text{on } \partial_L \C_\ep, \\
      \lim_{y \to0} y^{1-2s} \partial_y \varPhi_{\lambda,\xi} &= 0 && \text{on } \Omega_\ep
    \end{aligned}
  \right.
\end{equation}
(recall that that $\C_\ep$ is the enlarged cylinder $\Omega_\ep \times (0,\infty)$ and $\partial_L \C_\ep$ its lateral boundary).

We develop an initial approximation with concentration at certain $m$ points $\xi_1,\dotsc,\xi_m\in\Omega$. To this end, we consider the properly scaled points 
\begin{equation}\label{scaled_xi}
\xi_i' = \ep^{-\frac{1}{n-2s}} \xi_i \in \Omega_\ep,
\end{equation}
and, for parameters $\lambda_1,\dotsc,\lambda_m>0$, look for a solution of problem \eqref{main_equation} of the form 
\begin{equation}\label{ansatz}
v = \bar v + \phi,
\end{equation}
where
\[
\bar v = \sum_{i=1}^m v_i, \quad \text{with } v_i = v_{\lambda_i,\xi_i'}.
\]
The points and parameters $\{\xi_i,\lambda_i\}_{i=1}^m$ shall be suitable chosen to made the term $\phi$ of ``small order'' all over $\Omega_\ep$. 

As we pointed out in the previous section (see \eqref{extension} and \eqref{extension-b}), solutions of \eqref{main_equation} are closely related to those of 
\begin{equation}
  \left\{
    \begin{aligned}
       \divr( y^{1-2s} \nabla V ) &= 0 && \text{in } \C_\ep, \\
       V &> 0 && \text{in } \C_\ep, \\
       V &= 0 && \text{on } \partial_L \C_\ep, \\
       - \lim_{y\to0} y^{1-2s} \partial_y V &= v^{p^* \pm \ep} && \text{on } \Omega_\ep.
    \end{aligned}
  \right.
\end{equation}
These functions correspond, in turn, to stationary points of the energy functional
\begin{equation}\label{energy_functional}
J_{\pm \ep}(V) = \frac12 \int_{\C_\ep } y^{1-2s}|\nabla V|^2-\frac{1}{p^*+1 \pm \ep} \int_{\Omega_\ep} |V|^{p^*+1 \pm \ep}.
\end{equation}
We remark that in the subcritical case these functionals are well defined and $C^1$ in the Hilbert space $H^s_{0,L}(\C_\ep)$.

If a solution of the form \eqref{ansatz} exists, we should have $J_{\pm \ep}(V) \sim J_{\pm \ep}(\bar V)$, where $V$ and $\bar V$ denote the s-harmonic extension of $v$ and $\bar v$, respectively. Then the corresponding points $(\xi_1,\dotsc,\xi_m,\lambda_1,\dotsc,\lambda_m)$ in the definition of $\bar v$ are also ``approximately stationary'' for the finite dimensional functional $(\xi_1,\dotsc,\xi_m,\lambda_1,\dotsc,\lambda_m) \mapsto J_{\pm \ep}(\bar V)$. It is then necessary to understand the structure of this functional and find critical points that survive small perturbations. A first approximation is the following: If the points $\xi_i$ are taken far apart from each other and far away from the boundary, 
\[
  J_{\pm \ep}(\bar V) \sim \sum_{i=1}^m J_{\pm \ep}(V_i) \sim m C_{n,s}
\]
where
\[
  C_{n,s} = \frac12 \int_{\R^{n+1}_+} y^{1-2s}|\nabla W|^2-\frac{1}{p+1} \int_{\R^n} |w|^{p^*+1},
\]
and $V_i$ and $W$ are the s-harmonic extension of $v_i$ and $w$, respectively. 

To work out a more precise expansion, it will be convenient to recast the variables $\lambda_i$ into the $\Lambda_i$'s given by
\begin{equation}\label{capital lambda}
\lambda_i = (\beta_{n,s} \Lambda_i)^{\frac{1}{n-2s}}
\end{equation}
with
\[
\beta_{n,s} = \frac{\int_{\R^n}w^{p^*+1}}{(p^*+1)(\int_{\R^n}w^{p^*})^2}
\]
In order to get good estimates of $J_{\pm \ep}(\bar V)$, we take the concentration points uniformly separated in $\Omega$ and stay away from the boundary. Let us  fix a small $\delta>0$ and work with $ \xi_1,\dotsc,\xi_m \in \Omega $, and $ \lambda_1,\dotsc,\lambda_m > 0 $, such that 
\begin{gather}
|\xi_i-\xi_j| \ge \delta \quad \text{for all } i\neq j \quad \text{and} \quad \dist(\xi_i,\partial\Omega) \ge \delta \quad \text{for all } i; \label{xi_separation} \\ 
\Lambda_i \in (\delta,\delta^{-1}) \quad \text{for all } i. \label{lambda_separation}
\end{gather}
In order to find and expansion of $J_{\pm \ep}(\bar V)$, let us find before one for $\varphi_{\lambda,\xi'}$ and $v_{\lambda,\xi'}$. 

\begin{lemma}
Given $\xi \in \Omega$ and $\lambda > 0$, we have that
\begin{equation} \label{3.1}
  \varphi_{\lambda,\xi'}(\ep^{-\frac{1}{n-2s}}x) = \alpha \lambda^{\frac{n-2s}2} H(x,\xi) \ep + o(\ep), 
\end{equation}
uniformly for $x \in \Omega$. And, away from $x=\xi$,
\begin{equation} \label{3.2}
  v_{\lambda,\xi'}(\ep^{-\frac{1}{n-2s}}x) = \alpha \lambda^{\frac{n-2s}2} G(x,\xi) \ep + o(\ep),
\end{equation}
uniformly for $x$ on each compact subset of $\Omega$. Here $\alpha = \alpha(n,s) = \int_{\R^n} w^{p^*}$ and $G$, $H$ are respectively the Green function of the fractional Laplacian with Dirichlet boundary condition on $\Omega$ and its regular part.  
\end{lemma}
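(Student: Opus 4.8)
The plan is to pass to the extension \eqref{phi-extension}, rescale back to the fixed cylinder $\C=\Omega\times(0,\infty)$, and compare with the extension problem \eqref{regular-extension} for $H$ via the maximum principle of Lemma~\ref{maximum_principle}. First I would set $\tilde\varphi(x):=\varphi_{\lambda,\xi'}(\ep^{-\frac1{n-2s}}x)$ for $x\in\Omega$. The operator $\divr(y^{1-2s}\nabla\,\cdot\,)$ together with the weighted conormal derivative $-\lim_{y\to0}y^{1-2s}\partial_y$ is invariant, up to a positive constant, under the dilation $X\mapsto\ep^{-\frac1{n-2s}}X$; hence $\tilde\varPhi(X):=\varPhi_{\lambda,\xi'}(\ep^{-\frac1{n-2s}}X)$ is $s$-harmonic in $\C$, has vanishing weighted conormal derivative on $\Omega$, has trace $\tilde\varphi$ on $\Omega$, and on the lateral boundary $\partial_L\C$ it equals the rescaled bubble $W_{\lambda,\xi'}(\ep^{-\frac1{n-2s}}x,\ep^{-\frac1{n-2s}}y)$ for $x\in\partial\Omega$, $y>0$.

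The key step is the asymptotics of this rescaled bubble on $\partial_L\C$. Since $w_{\lambda,\xi'}$ solves \eqref{entire-solutions} and decays at infinity, its extension equals the potential $W_{\lambda,\xi'}(Z)=\int_{\R^n}\varGamma(Z,(z,0))\,w_{\lambda,\xi'}^{p^*}(z)\,\di z$, and the substitution $z=\xi'+\lambda\zeta$, using $\tfrac{n-2s}{2}\,p^*=\tfrac{n+2s}{2}$, yields the mass identity $\int_{\R^n}w_{\lambda,\xi'}^{p^*}=\lambda^{\frac{n-2s}{2}}\int_{\R^n}w^{p^*}=\alpha\lambda^{\frac{n-2s}{2}}$. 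On $\partial_L\C_\ep$ the concentration point $\xi'$ lies at distance $\ge\ep^{-\frac1{n-2s}}\dist(\xi,\partial\Omega)\to\infty$, so the kernel $\varGamma(\,\cdot\,,(z,0))$ is almost constant — equal to $\varGamma(\,\cdot\,,(\xi',0))$ — over the bulk of the mass of $w_{\lambda,\xi'}^{p^*}$. To make this quantitative I would expand the kernel to second order, cancel the first-order term using the radial symmetry of $w_{\lambda,\xi'}^{p^*}$ about $\xi'$, and control the remaining (individually divergent) second moment by a near/far truncation at scale comparable to $\ep^{-\frac1{n-2s}}$, using the integrable tail bound $\int_{\{|\zeta|>T\}}w^{p^*}\lesssim T^{-2s}$. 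This gives, uniformly for $x\in\partial\Omega$ and $y>0$,
\[
W_{\lambda,\xi'}\big(\ep^{-\frac1{n-2s}}x,\ep^{-\frac1{n-2s}}y\big)=\ep\,\alpha\lambda^{\frac{n-2s}{2}}\,\varGamma\big((x,y),(\xi,0)\big)\big(1+O(\ep^{\gamma})\big)
\]
for some $\gamma>0$; specializing $y=0$ gives $w_{\lambda,\xi'}(\ep^{-\frac1{n-2s}}x)=\ep\,\alpha\lambda^{\frac{n-2s}{2}}\varGamma(x,\xi)+o(\ep)$ uniformly for $x$ in a compact subset of $\Omega\setminus\{\xi\}$. (Here one uses $b_{n,s}=a_{n,s}\alpha$, which follows by matching the decay of $w$ at infinity with its representation $w=\varGamma*w^{p^*}$.)

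Now recall from \eqref{regular-extension} that the extension of $H(\cdot,\xi)$ is $s$-harmonic in $\C$, has vanishing weighted conormal derivative on $\Omega$, and equals $\varGamma((x,y),(\xi,0))$ on $\partial_L\C$. Consequently $\tilde\varPhi-\ep\,\alpha\lambda^{\frac{n-2s}{2}}\,H(\cdot,\xi)$ is $s$-harmonic in $\C$, has vanishing conormal derivative on $\Omega$, and has lateral data of sup-norm $o(\ep)$; by Lemma~\ref{maximum_principle} it is then $o(\ep)$ throughout $\C$, and restricting to $\Omega\times\{0\}$ gives \eqref{3.1}. For \eqref{3.2} I would write $v_{\lambda,\xi'}=w_{\lambda,\xi'}-\varphi_{\lambda,\xi'}$, insert \eqref{3.1} together with the trace asymptotics of $w_{\lambda,\xi'}$ from the previous paragraph, and use $G=\varGamma-H$. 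Since the above estimates involve $\xi$ and $\lambda$ only through $\dist(\xi,\partial\Omega)$ and the size of $\lambda$, all the remainders are uniform for $\xi,\lambda$ in the ranges \eqref{xi_separation}--\eqref{lambda_separation}, which is what is actually needed later.

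I expect the main obstacle to be the uniform control of $W_{\lambda,\xi'}$ over the \emph{whole} lateral boundary $\partial\Omega\times(0,\infty)$, including $y\to\infty$: since $w^{p^*}$ decays only like $|\zeta|^{-(n+2s)}$, all its moments of order $\ge 2s$ diverge, so there is no direct Taylor expansion of the Riesz kernel available, and one has to combine the symmetry cancellation with a careful near/far splitting in order to extract the leading term with a remainder that is genuinely uniform in $y$.
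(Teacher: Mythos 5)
Your argument is correct and follows the same route as the paper: represent $W_{\lambda,\xi'}=\varGamma*w_{\lambda,\xi'}^{p^*}$, rescale, extract the leading term $\alpha\lambda^{(n-2s)/2}\varGamma(\cdot,\xi)\ep$ uniformly on $\partial_L\C$, and transfer the estimate to the interior by the maximum principle using the characterization \eqref{regular-extension} of $H$. The only difference is that you over-engineer the boundary asymptotic: since the lemma only needs $o(\ep)$ and not a sharp rate, a first-order Lipschitz bound on the kernel in the near region $|\tau|\lesssim\ep^{-1/(n-2s)}$ (giving $O(\ep^{\min\{1,2s\}/(n-2s)})$ there) combined with the far-field tail estimate $O(\ep^{2s/(n-2s)})$ already suffices, without invoking the symmetry cancellation of the first moment or any second-moment analysis.
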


\begin{proof}
Using \eqref{greenformula} and then \eqref{w_definition}, the function $W_{\lambda,\xi'}$ in \eqref{w-extension} can be written as
\begin{align*}
  W_{\lambda,\xi'}(z) &= \int_{\R^n} \varGamma(z,\tau)w_{\lambda,\xi'}^{p^*}(\tau)\, \di \tau \\
                      &= \lambda^{-\frac{n+2s}{2}}\int_{\R^n} \varGamma(z,\tau)w^{p^*}(\lambda^{-1}(\tau-\xi'))\, \di \tau \quad \text{for all } z =(x,y) \in \R^{n+1}_+. 
\end{align*}

Regarding \eqref{regular-extension} and \eqref{phi-extension}, let us now consider the functions $H_\ep(z) = \alpha \lambda^{\frac{n-2s}2} H(x,\xi) \ep$ and $\varPhi_\ep(z) = \varPhi_{\lambda,\xi'}(\ep^{-\frac{1}{n-2s}}z)$, both defined in $\C$. Using the previous identity, we have that,  
\begin{align*}
  \varPhi_\ep(z)= W_{\lambda,\xi'}(\ep^{-\frac{1}{n-2s}}z) &= \lambda^{-\frac{n+2s}{2}}\int_{\R^n} \varGamma(\ep^{-\frac{1}{n-2s}}z,\tau)w^{p^*}(\lambda^{-1}(\tau-\xi'))\, \di \tau \\
   & = \lambda^{\frac{n-2s}{2}}\int_{\R^n} \varGamma(\ep^{-\frac{1}{n-2s}}z,\xi'+\lambda \tau) w^{p^*}(\tau)\, \di \tau \\
   & = \lambda^{\frac{n-2s}{2}} \ep \int_{\R^n} \varGamma(z,\xi+\lambda \ep^{\frac{1}{n-2s}} \tau) w^{p^*}(\tau)\, \di \tau \\
   & = \alpha \lambda^{\frac{n-2s}2} \varGamma(z,\xi) \ep + o(\ep),
\end{align*}
uniformly for $z \in \partial_L \Omega$.
Therefore, 
\[
\sup_{z \in \partial_L \Omega}|\varPhi_\ep(z)-H_\ep(z)|=o(\ep)
\]
By the maximum principle in the previous section, we deduce that
\[
\sup_{z \in \Omega}|\varPhi_\ep(z) - H_\ep(z)|=o(\ep).
\]
This establishes \eqref{3.1}. A similar argument can be used to state \eqref{3.2}.
\end{proof}

\begin{lemma} \label{energy_expansion}
The following expansion holds:
\begin{equation}\label{energy-expansion}
J_{\pm \ep}(\bar V)= m C_{n,s} + [\gamma_{n,s} + \omega_{n,s} \Psi(\xi,\Lambda)]\ep + o(\ep)
\end{equation}
uniformly with respect to $(\xi,\Lambda)$ satisfying \eqref{xi_separation} and \eqref{lambda_separation}. Here
\begin{equation} \label{psi}
  \Psi(\xi, \Lambda)= \frac{1}{2} \left\{\sum_{i=1}^m H(\xi_i,\xi_i) \Lambda_i^2-2\sum_{i<j} G(\xi_i,\xi_j)\Lambda_i \Lambda_j \right\} \pm \log(\Lambda_1 \dotsm \Lambda_m),
\end{equation}
\[
  \gamma_{n,s} = \left\{ \pm \frac{m}{p^*+1}\omega_{n,s} \pm \frac{m}{2}\omega_{n,s} \log \beta_{n,s} \mp \frac{m}{p^*+1} \int_{\R^n} w^{p^*+1} \log w \right\} 
\]
and
\[
  \omega_{n,s} = \frac{\int_{\R^n} w^{p^*+1}}{p^*+1}.
\]
\end{lemma}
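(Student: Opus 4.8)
The plan is to expand $J_{\pm\ep}(\bar V)$ term by term, writing $\bar V = \sum_i V_i$ where $V_i$ is the $s$-harmonic extension of $v_i = v_{\lambda_i,\xi_i'}$, and exploiting the fact that each $v_i$ solves \eqref{v_i} so that the extension $V_i$ satisfies the Dirichlet-Neumann problem with Neumann data $w_i^{p^*}$. First I would treat the quadratic (Dirichlet) part: integrating by parts and using the equation for each $V_i$,
\[
  \int_{\C_\ep} y^{1-2s}|\nabla \bar V|^2 = \sum_{i} \int_{\Omega_\ep} w_i^{p^*} v_i + \sum_{i\ne j}\int_{\Omega_\ep} w_i^{p^*} v_j .
\]
For the diagonal terms I would write $v_i = w_i - \varphi_i$ and use \eqref{3.1}: $\int w_i^{p^*}w_i = \int_{\R^n} w^{p^*+1}$ after scaling, while $\int w_i^{p^*}\varphi_i$ picks up, via the pointwise expansion $\varphi_{\lambda_i,\xi_i'}(\ep^{-1/(n-2s)}x) = \alpha\lambda_i^{(n-2s)/2}H(x,\xi_i)\ep + o(\ep)$ together with the fact that $w_i^{p^*}$ concentrates as a multiple of $\alpha\lambda_i^{(n-2s)/2}\delta_{\xi_i}$, a contribution $\alpha^2 \lambda_i^{n-2s} H(\xi_i,\xi_i)\ep + o(\ep)$. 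For the off-diagonal terms I would use \eqref{3.2}: $\int w_i^{p^*} v_j$ concentrates against $G(\cdot,\xi_j)$, giving $\alpha^2 \lambda_i^{(n-2s)/2}\lambda_j^{(n-2s)/2} G(\xi_i,\xi_j)\ep + o(\ep)$, and by symmetry each unordered pair $\{i,j\}$ appears twice. After substituting \eqref{capital lambda} (so $\lambda_i^{n-2s} = \beta_{n,s}\Lambda_i$) and recalling $\alpha = \int_{\R^n} w^{p^*}$ and $\beta_{n,s} = \int w^{p^*+1}/[(p^*+1)(\int w^{p^*})^2]$, the cross-term coefficients collapse to exactly $\omega_{n,s}H(\xi_i,\xi_i)\Lambda_i^2$ and $-2\omega_{n,s}G(\xi_i,\xi_j)\Lambda_i\Lambda_j$ — i.e. twice the bracketed quadratic form in \eqref{psi} — before dividing by $2$.

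Next I would expand the nonlinear term $\frac{1}{p^*+1\pm\ep}\int_{\Omega_\ep}|\bar V|^{p^*+1\pm\ep}$. Since the $v_i$ are mutually well separated at scale $\ep^{-1/(n-2s)}$, the leading contribution is $\sum_i \frac{1}{p^*+1\pm\ep}\int w_i^{p^*+1\pm\ep}$ up to $o(\ep)$ (interaction terms between distinct bubbles in this integral are of higher order, which I would check using the standard decay estimates on $w_{\lambda,\xi}$ and the fact that they are genuinely concentrated). Here two effects of order $\ep$ enter: expanding the exponent $p^*+1\pm\ep$ in $\ep$ produces the $\mp\frac{m}{p^*+1}\int_{\R^n} w^{p^*+1}\log w$ term and, through the Jacobian and amplitude of the scaling \eqref{w_definition}, the $\log\Lambda_i$ and $\log\beta_{n,s}$ terms. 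Concretely, after the change of variables $x = \ep^{1/(n-2s)}(\xi_i' + \lambda_i z)$ one gets $\int_{\Omega_\ep} w_i^{p^*+1\pm\ep} = \lambda_i^{\mp\ep(n-2s)/2}\int_{\R^n} w^{p^*+1\pm\ep}(1+o(1))$; expanding $\lambda_i^{\mp\ep(n-2s)/2} = 1 \mp \ep\frac{n-2s}{2}\log\lambda_i + o(\ep)$ and using \eqref{capital lambda} to replace $\log\lambda_i$ by $\frac{1}{n-2s}(\log\beta_{n,s}+\log\Lambda_i)$, together with $\int w^{p^*+1\pm\ep} = \int w^{p^*+1}\bigl(1\pm\ep\int w^{p^*+1}\log w/\int w^{p^*+1} + o(\ep)\bigr)$, and expanding $\frac{1}{p^*+1\pm\ep}$, yields exactly the $\pm\log(\Lambda_1\cdots\Lambda_m)$ term of $\Psi$ (with coefficient $\omega_{n,s}$) plus the three pieces constituting $\gamma_{n,s}$ plus $mC_{n,s}$-related contributions. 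Assembling the quadratic and nonlinear parts, collecting the $\ep^0$ terms into $mC_{n,s}$ (using the definition of $C_{n,s}$ and the fact that $\|\nabla W\|^2 = \int w^{p^*+1}$), the $\ep$-terms into $[\gamma_{n,s}+\omega_{n,s}\Psi(\xi,\Lambda)]\ep$, and absorbing the rest into $o(\ep)$, gives \eqref{energy-expansion}.

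The main obstacle I anticipate is making the "concentration" heuristics rigorous with $o(\ep)$ control that is \emph{uniform} over the compact parameter region \eqref{xi_separation}–\eqref{lambda_separation}: namely, justifying that $\int_{\Omega_\ep} w_i^{p^*}\varphi_i = \alpha^2\lambda_i^{n-2s}H(\xi_i,\xi_i)\ep + o(\ep)$ and $\int_{\Omega_\ep} w_i^{p^*} v_j = \alpha^2 (\lambda_i\lambda_j)^{(n-2s)/2}G(\xi_i,\xi_j)\ep+o(\ep)$, which requires knowing not only the pointwise expansions \eqref{3.1}–\eqref{3.2} but also that the error in them, once integrated against the (concentrating but not literally Dirac) density $w_i^{p^*}$, remains $o(\ep)$; the tail of $w_i^{p^*}$ outside a fixed ball around $\xi_i'$ contributes at scale $\ep^{(n+2s)/(n-2s)}$ or better times slowly varying factors, which has to be balanced carefully against the $O(\ep)$ size of $\varphi_i$ and the growth of $H$, $G$ near the boundary — controlled by \eqref{xi_separation}. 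Equally, one must verify that the bubble–bubble interaction in $\int_{\Omega_\ep}|\bar V|^{p^*+1\pm\ep}$ is genuinely $o(\ep)$ rather than $O(\ep)$; this is where the separation $|\xi_i-\xi_j|\ge\delta$ and the decay rate $n-2s$ of $w$ (and the condition $n>2s$) are used. I expect these estimates to parallel those in \cite{ChKiLe2014,dPFeMu2002}, adapted to the extension framework of Section~2, and to be technical but not conceptually surprising.
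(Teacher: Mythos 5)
Your proposal follows essentially the same route as the paper: decompose $\bar V = \sum_i V_i$, integrate by parts using that $V_i$ solves the extension problem with Neumann data $w_i^{p^*}$, expand the resulting boundary integrals via the pointwise asymptotics \eqref{3.1}--\eqref{3.2} to extract the quadratic form in $H$, $G$, and then Taylor-expand the nonlinear term in $\ep$ to produce the $\log$ contributions and the constant $\gamma_{n,s}$. The only organizational difference is that the paper first isolates the critical-exponent functional $J_0$ and then separately expands $J_{\pm\ep} - J_0$ (citing \cite{dPFeMu2002}), while you expand $\frac{1}{p^*+1\pm\ep}\int|\bar V|^{p^*+1\pm\ep}$ all at once by changing variables and expanding $\lambda_i^{\mp\ep(n-2s)/2}$; these are equivalent bookkeeping choices.

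One arithmetic point worth flagging: you write ``substituting \eqref{capital lambda} (so $\lambda_i^{n-2s}=\beta_{n,s}\Lambda_i$)'' and then conclude the coefficient is $\omega_{n,s}H(\xi_i,\xi_i)\Lambda_i^2$, but with that substitution your preceding contribution $\alpha^2\lambda_i^{n-2s}H(\xi_i,\xi_i)\ep$ becomes $\omega_{n,s}H(\xi_i,\xi_i)\Lambda_i\,\ep$, i.e.\ only a single power of $\Lambda_i$; the same issue then propagates to the $\log$ coefficient (you would get $\tfrac12\omega_{n,s}\log\Lambda_i$ rather than $\omega_{n,s}\log\Lambda_i$). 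This discrepancy traces to \eqref{capital lambda} in the paper, which should read $\lambda_i=(\beta_{n,s}\Lambda_i^2)^{1/(n-2s)}$ (equivalently $\lambda_i^{(n-2s)/2}=\beta_{n,s}^{1/2}\Lambda_i$) for the stated $\Lambda_i^2$ in \eqref{3.4} and the coefficient $\omega_{n,s}$ in front of $\log(\Lambda_1\cdots\Lambda_m)$ to come out; compare with \eqref{asymptotic_behavior} where $\beta_{n,s}^{1/2}\Lambda_{i\ep}^2$ appears. Your final stated coefficients agree with the lemma, so the intended normalization is clear, but the intermediate step as written does not justify them; you should use the corrected relation $\lambda_i^{n-2s}=\beta_{n,s}\Lambda_i^2$ throughout.
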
 
\begin{proof}
Consider the energy functional
\[
J_0(V) = \frac12 \int_{\C_\ep} y^{1-2s}|\nabla V|^2-\frac{1}{p+1} \int_{\Omega_\ep} |V|^{p^*+1}.  
\]
In order to prove \eqref{energy-expansion}, let us first estimate $I_0(\bar V)$. Recall that $\bar v = \sum_{i=1}^m v_i$, and then $\bar V = \sum_{i=1}^m V_i$ where $\bar V$ and $V_i$ represent the $s$-harmonic extension of $\bar v$ and $v_i$, respectively. We have
\begin{equation}\label{3.3}
  \begin{aligned}
    J_0(\bar V) &= J_0(\sum_{i=1}^m V_i) \\
                &= \sum_{i=1}^m \frac12 \int_{\C_\ep} y^{1-2s}|\nabla V_i|^2-\frac{1}{p^*+1} \int_{\Omega_\ep} v_i^{p^*+1}\\
                & + \sum_{i \neq j} \int_{\C_\ep} y^{1-2s} \nabla V_i \nabla V_j - \frac{1}{p^*+1} \left[ \int_{\Omega_\ep} \left( \sum_{i=1}^m v_i \right)^{p^*+1}-\sum_{i=1}^m v_i^{p^*+1} \right].
  \end{aligned}
\end{equation}

Now, recall that, by \eqref{v_i}, $V_i$ satisfies up to a constant
\begin{equation}
  \left\{
    \begin{aligned}
       \divr( y^{1-2s} \nabla V_i ) &= 0 && \text{in } \C_\ep, \\
       V_i &= 0 && \text{on } \partial_L \C_\ep, \\
       - \lim_{y\to0} y^{1-2s} \partial_y V_i &= w_i^{p^*} && \text{on } \Omega_\ep,
    \end{aligned}
  \right.
\end{equation}
where $w_i = w_{\lambda_i,\xi_i'}$. Integrating by parts, we deduce that
\[
  \int_{\C_\ep} y^{1-2s}|\nabla V_i|^2 = \int_{\Omega_\ep} w_i^{p^*} v_i = \int_{\Omega_\ep} w_i^{p^*+1}- w_i^{p^*} \varphi_i.
\]
This and the previous lemma imply
\begin{equation}\label{3.4}
  \begin{aligned}
    \int_{\C_\ep} y^{1-2s}|\nabla V_i|^2 &= \int_{\R^n} w^{p^*+1} - \beta_{n,s} \left( \int_{\R^n} w^{p^*} \right)^2 H(\xi_i,\xi_i) \Lambda_i^2 \ep + o(\ep) \\
    &= \int_{\R^{n+1}_+} y^{1-2s}|\nabla W|^2 - \beta_{n,s} \left( \int_{\R^n} w^{p^*} \right)^2 H(\xi_i,\xi_i) \Lambda_i^2 \ep + o(\ep),
  \end{aligned} 
\end{equation}
where the last equality is due to $W$ is the $s$-harmonic extension of $w$, which satisfies equation \eqref{entire-solutions}.

By a similar argument, we see that
\begin{gather}
  \int_{\C_\ep} y^{1-2s} \nabla V_i \nabla V_j = \beta_{n,s} \left( \int_{\R^n} w^{p^*} \right)^2 G(\xi_i,\xi_j) \Lambda_i \Lambda_j \ep + o(\ep), \label{3.5} \\
  \int_{\Omega_\ep}v_i^{p^*+1} = \int_{\R^n} w^{p^*+1} - (p^*+1) \beta_{n,s} \left( \int_{\R^n} w^{p^*} \right)^2 H(\xi_i, \xi_i) \Lambda_i^2 \ep + o(\ep) \label{3.6}
\end{gather}
and 
\begin{equation} \label{3.7}
  \begin{aligned}
    \frac{1}{p^*+1} &\left[\int_{\Omega_\ep} \left( \sum_{i=1}^m v_i \right)^{p^*+1}-\sum_{i=1}^m v_i^{p^*+1} \right] \\
    & = 2 \beta_{n,s} \left( \int_{\R^n} w^{p^*} \right)^2 G(\xi_i,\xi_j) \Lambda_i \Lambda_j \ep + o(\ep) \quad \text{for all } i \neq j.
  \end{aligned}
\end{equation}
Putting \eqref{3.4}--\eqref{3.7} in \eqref{3.3}, we conclude that
\[
  J_0(\bar V)= m C_{n,s} + \frac{\omega_{n,s}}{2} \left\{\sum_{i=1}^m H(\xi_i,\xi_i) \Lambda_i^2-2\sum_{i<j} G(\xi_i,\xi_j)\Lambda_i \Lambda_j \right\}\ep + o(\ep).
\]

On the other hand, 
\[
J_{\pm \ep}(\bar V) - J_{0}(\bar V) = \pm \frac{\ep}{(p^*+1)^2} \int_{\Omega_\ep} \bar V^{p^*+1} \mp \frac{\ep}{p^*+1}\int_{\Omega_\ep} \bar V^{p^*+1} \log \bar V + o(\ep).
\]
The right hand side can be computed as in \cite[Lemma~2.1]{dPFeMu2002} and \cite{dPFeMu2003}, it gives us the following expansion
\begin{align*}
  J_{\pm \ep}(\bar V) -& J_{0}(\bar V) \\
  =& \left[ \pm \frac{m}{(p^*+1)^2} \int_{\R^n} w^{p^*+1} \pm \frac{m}{2(p^*+1)} \log \beta_{n,s} \left( \int_{\R^n} w^{p^*+1} \right) \right.\\
  & \left. \pm \frac{\int_{\R^n} w^{p^*+1}}{p^*+1}\log (\Lambda_1 \dotsm \Lambda_m) \mp \frac{m}{p^*+1}\int_{\R^n} w^{p^*+1} \log w \right]\ep + o(\ep), 
\end{align*}
which concludes the proof.
\end{proof}

\begin{remark}
The quantity $o(\ep)$ in the expansion above is actually also of that size in the $C^1$-norm as a function of $\xi$ and $\Lambda$ satisfying \eqref{xi_separation} and \eqref{lambda_separation}.
\end{remark}


\section{The finite-dimensional reduction}

In this section we introduce a linear problem in a suitable functional setting which is the basis for the reduction of problem \eqref{1.1} to the study of a finite dimensional problem. Fix a small number $\delta > 0$ and consider points $\xi_i' \in \Omega_\ep$ and numbers $\Lambda_i>0$, $i=1,\dotsc,m$, such that
\begin{equation}\label{constrains}
  \begin{gathered}
    |\xi_i'-\xi_j'| \ge \ep^{-\frac{1}{n-2s}} \delta \quad \text{for all } i\neq j, \\
    \dist(\xi_i',\partial \Omega_\ep) > \ep^{-\frac{1}{n-2s}} \quad \text{and} \quad \delta < \Lambda_i < \delta^{-1} \quad \text{for all } i.
  \end{gathered}
\end{equation}

As we mention in the previous section, we look for solutions to problem \eqref{main_equation} of the form $v=\bar v + \phi$, see \eqref{ansatz}. So we consider the intermediate problem of finding $\phi$ and $c_{ij}$ such that 
\begin{equation} \label{4.1}
  \left\{
    \begin{aligned}
    (-\Delta_{\Omega_\ep})^s(\bar v + \phi) &= (\bar v + \phi)_+^{p^* \pm \ep} + \sum_{i,j} c_{ij} w^{p^*-1} z_{ij} && \text{in } \Omega_\ep, \\
    \phi &= 0  && \text{on } \partial \Omega_\ep, \\
    \int_{\Omega_\ep}\phi w^{p^*-1} z_{ij} &= 0 && \text{for all } i,j,
    \end{aligned}
  \right.
\end{equation}
where $z_{ij}$ are defined as follows: consider the functions
\begin{gather}
  \bar z_{ij} = \frac{\partial w_{\lambda_i,\xi_i'}}{\partial \xi_{ij}'}, \quad 1 \le i \le m,\ 1 \le j \le n, \label{bar-z_a}\\
  \bar z_{i0} = \frac{\partial w_{\lambda_i,\xi_i'}}{\partial \lambda_i} = \frac{n-2s}{2} w_{\lambda_i,\xi_i'}+ (x-\xi_i') \cdot \nabla w_{\lambda_i,\xi_i'}, \quad 1 \le i \le m \label{bar-z_b}
\end{gather} 
and then define the $z_{ij}$'s to their respective $H(\Omega_\ep)$-projection, i.e. the unique solutions of
\begin{equation*}
  \left\{
    \begin{aligned}
    (-\Delta_{\Omega_\ep})^s z_{ij} &= (-\Delta_{\Omega_\ep})^s \bar z_{ij}  && \text{in } \Omega_\ep, \\
    z_{ij} &= 0  && \text{on } \partial \Omega_\ep. \\
    \end{aligned}
  \right.
\end{equation*}

\begin{remark}
\begin{enumerate}[i)]
\item In order to find solutions of \eqref{main_equation}, we have to solve \eqref{4.1} and then find points $\xi_i'$ and scalars $\Lambda_i$ such that the associated $c_{ij}$ are all zero.
\item Observe that for $\phi\in L^\infty(\R^n)$ the integral
\[
\int_{\R^n} \phi w^{p^*-1} z_{ij}
\]
is well defined because $w^{p^*-1}(x)\leq C (1+|x|)^{-4s}$ and 
$|z_{ij}(x)|\leq C (1+|x|)^{-n+2s}$.
\item The role of the functions $\bar z_{ij}$ will be clarified in Proposition~\ref{non-degeneracy}. 
\end{enumerate}
\end{remark}

The first equation of \eqref{4.1} can be rewritten in the following form:
\begin{equation*}
(-\Delta_{\Omega_\ep})^s \phi - (p^* \pm \ep) \bar v^{p^*-1\pm \ep} \phi = R_\ep + N_\ep(\phi) + \sum_{i,j} c_{ij} w^{p^*-1} z_{ij}
\end{equation*}
where
\begin{gather*}
R_\ep = \bar v^{p^* \pm \ep} - \sum_{i=1}^m w_i^{p^*},\\
N_\ep(\phi) = (\bar v + \phi)_+^{p^* \pm \ep} - \bar v^{p^* \pm \ep} - (p^* \pm \ep) \bar v^{p^*-1\pm \ep} \phi.
\end{gather*}
Then we need to understand the following linear problem: given $h \in C^{\alpha}(\bar \Omega_\ep)$, find a function $\phi$ such that for certain constants $c_{ij}$, $i=1,\dotsc,m$, $j=0,\dotsc,n$ one has
\begin{equation}\label{projected_problem}
  \left\{
    \begin{aligned}
    (-\Delta_{\Omega_\ep})^s \phi - (p^* \pm \ep) \bar v^{p^*-1\pm \ep} \phi &= h + \sum_{i,j} c_{ij} w^{p^*-1} z_{ij} && \text{in } \Omega_\ep, \\
    \phi &= 0  && \text{on } \partial \Omega_\ep, \\
    \int_{\Omega_\ep}\phi w^{p^*-1} z_{ij} &= 0 && \text{for all } i,j.
    \end{aligned}
  \right.
\end{equation}
To solve this problem, we consider appropriate weighted $L^\infty$-norms: For a given  $\alpha\geq 0$, let us 
define the following norm of a function $h:\Omega_\ep\to\R$
\[
  \|h\|_{\alpha} = \sup_{x\in \Omega_\ep} 
  \frac{|h(x)|}{\sum_{i=1}^m (1+|x-\xi_i'|)^{-\alpha}} .
\]
With these norms, we have the following a priori estimate for bounded solutions of \eqref{projected_problem}.

\begin{lemma}\label{a_priori_bound}
Let $\alpha>2s$ and assume constrains \eqref{constrains} hold. Assume also that $\phi \in L^\infty(\Omega_\ep)$ is a solution of \eqref{projected_problem} for a function $h \in C^{\alpha}(\bar \Omega_\ep)$. Then there is $C$ such that for $\ep>0$ sufficiently small
\begin{equation} \label{phi_estimate}
  \|\phi\|_{L^\infty(\Omega_\ep)} \le C \|h\|_\alpha
\end{equation}
and 
\begin{equation} \label{c_estimate}
  |c_{ij}| \le C \|h\|_\alpha.
\end{equation}
\end{lemma}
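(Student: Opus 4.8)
The plan is to argue by contradiction, following the classical Lyapunov–Schmidt a priori estimate strategy (as in del Pino–Felmer–Musso and Choi–Kim–Lee). Suppose the estimate \eqref{phi_estimate} fails: then there are sequences $\ep_k \to 0$, points $\xi_i'^{(k)}$ and scalars $\Lambda_i^{(k)}$ satisfying \eqref{constrains}, functions $h_k$ with $\|h_k\|_\alpha \to 0$, and solutions $\phi_k$ of \eqref{projected_problem} with $\|\phi_k\|_{L^\infty(\Omega_{\ep_k})} = 1$. The first step is to handle the Lagrange multipliers: testing the first equation of \eqref{projected_problem} against each $z_{\ell m}$ and using the almost-orthogonality relations $\int_{\Omega_\ep} w^{p^*-1} z_{ij} z_{\ell m} = c_0 \delta_{i\ell}\delta_{jm} + o(1)$ together with the fact that $\int \bar v^{p^*-1\pm\ep}\phi_k z_{\ell m}$ and the other cross terms are $o(1)\|\phi_k\|_\infty + O(\|h_k\|_\alpha)$, one solves the resulting almost-diagonal linear system to get $|c_{ij}^{(k)}| \le C(\|h_k\|_\alpha + o(1)\|\phi_k\|_\infty)$, which will in particular yield \eqref{c_estimate} once \eqref{phi_estimate} is established.

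The second and main step is the blow-up analysis. Because $\|\phi_k\|_\infty = 1$, pick $x_k \in \Omega_{\ep_k}$ where (roughly) the weighted sup is attained. Two cases arise. If $x_k$ stays at bounded distance from some $\xi_i'^{(k)}$, rescale: set $\tilde\phi_k(y) = \phi_k(\xi_i'^{(k)} + \lambda_i^{(k)} y)$ (suitably normalized). Using the $s$-harmonic extension and the Caffarelli–Silvestre characterization \eqref{extension}--\eqref{extension-b}, $\tilde\phi_k$ (via its extension) satisfies a problem of the form $\divr(y^{1-2s}\nabla \tilde\Phi_k)=0$ in a half-space with Neumann data $(p^*)w^{p^*-1}\tilde\phi_k + o(1)$; local elliptic estimates for the degenerate operator give convergence in $C^0_{loc}$ to a bounded limit $\tilde\phi$ solving the linearized equation $(-\Delta)^s\tilde\phi = p^* w^{p^*-1}\tilde\phi$ in $\R^n$. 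The orthogonality conditions pass to the limit and force $\tilde\phi \perp \{\partial_\lambda w, \partial_{\xi_j} w\}$, so by the nondegeneracy of the bubble (this is Proposition~\ref{non-degeneracy}, which we may invoke) $\tilde\phi \equiv 0$ — contradicting that $|\tilde\phi_k(x_k)|$ is of order one (one must check the normalization so the max is genuinely not lost). If instead $x_k$ drifts away from all concentration points, one uses that the potential $\bar v^{p^*-1\pm\ep}$ is then small there, so $\phi_k$ essentially solves $(-\Delta_{\Omega_\ep})^s\phi_k = h_k + (\text{small}) + \sum c_{ij}w^{p^*-1}z_{ij}$; comparing with the Green's function of $(-\Delta_{\Omega_\ep})^s$ and the decay bounds $w^{p^*-1}(x) \le C(1+|x|)^{-4s}$, $|z_{ij}(x)| \le C(1+|x|)^{-n+2s}$, and the condition $\alpha > 2s$, one shows $\|\phi_k\|_\infty \to 0$, again a contradiction. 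The maximum principle of Lemma~\ref{maximum_principle} is the natural tool for the comparison argument in this far-away regime.

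The main obstacle is the blow-up step near a concentration point: one must carefully set up the rescaled extension problem, obtain uniform-up-to-the-boundary estimates for the degenerate elliptic equation with the weight $y^{1-2s}$ so as to extract a locally uniformly convergent subsequence, and verify that the limit is bounded and lies in the right function space $\D^s(\R^{n+1}_+)$ so that the nondegeneracy result applies. A secondary technical point is the precise bookkeeping of the error terms $R_{\ep_k}$, $N_{\ep_k}(\phi_k)$ and the $\ep$-dependent exponent $p^*\pm\ep$ in the potential, ensuring all cross terms are genuinely $o(1)$ uniformly under the constraints \eqref{constrains}; these estimates rely on the expansions \eqref{3.1}--\eqref{3.2} of $v_{\lambda,\xi'}$ from the previous lemma. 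Once both contradictions are reached, \eqref{phi_estimate} follows, and feeding it back into the Lagrange-multiplier system of Step~1 gives \eqref{c_estimate}.
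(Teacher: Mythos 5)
Your proposal follows the same overall strategy as the paper — estimate the multipliers $c_{ij}$ via the almost‑diagonal system, argue by contradiction with $\|\phi_k\|_\infty=1$, and use the nondegeneracy of the bubble (Proposition~\ref{non-degeneracy}) plus a Green's‑function/comparison argument — and all the key lemmas you name are the ones the paper actually uses. Where you differ is in the organization of the contradiction step. You set up a dichotomy on where a near‑maximizer $x_k$ sits (bounded distance from some $\xi_i'^{(k)}$ versus drifting away). The paper does not pick a maximizing point: it first shows directly that $\phi_\ep\to 0$ uniformly on every fixed ball $B_R(\xi_{i\ep}')$ (the blow‑up / nondegeneracy step, with no reference to a maximizer), then uses this to make the weighted norm $\|(p^*\pm\ep)\bar v^{\,p^*-1\pm\ep}\phi_\ep\|_\beta$ small, and finally builds an explicit barrier $\psi_\ep$ out of Riesz potentials of the three pieces of the right‑hand side, comparing via the maximum principle (Lemma~\ref{maximum_principle}) and the elementary convolution estimate Lemma~\ref{lconv} to conclude $\|\phi_\ep\|_\gamma\to 0$. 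Your ``Case 2'' is phrased as if the argument were local (``the potential is small \emph{there}''), but because the operator is nonlocal, controlling $\phi_k(x_k)$ requires integrating the full right‑hand side against the Green's function over all of $\Omega_{\ep_k}$; the actual mechanism is $|\phi_k(x_k)|\le \int\varGamma(x_k-y)\bigl[V_k(y)+|h_k(y)|+\sum|c_{ij}|w^{p^*-1}|z_{ij}|\bigr]dy$ (using $|\phi_k|\le 1$ and $G\le\varGamma$), after which the decay exponents $4s$, $\alpha>2s$, $n-2s$ and Lemma~\ref{lconv} give smallness as $|x_k-\xi_{i}'|\to\infty$. You do gesture at exactly these ingredients, so the argument is fixable as is; you should, however, explicitly invoke Lemma~\ref{lconv}, which is the crucial quantitative step you leave implicit. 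One small advantage of the paper's two‑step structure is that it yields the stronger weighted conclusion $\|\phi_\ep\|_\gamma\to 0$ (not merely $\|\phi_\ep\|_\infty\to 0$), which is what later upgrades the linear estimate to $\|L_\ep(h)\|_{\alpha-2s}\le C\|h\|_\alpha$ in Proposition~\ref{proposition_linear_eq}.
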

From now on, we denote by $C$ a generic constant which is independent of $\ep$ and the particulars $\xi_i'$, $\Lambda_i$ satisfying \eqref{constrains}. The proof of this lemma is based on the following non-degeneracy property of the solutions $w_{\lambda,\xi'}$ (see \cite{DadPSi2013}).
\begin{proposition}\label{non-degeneracy}
Any bounded solution $\phi$ of equation 
\begin{equation*}
(-\Delta)^s \phi  =  p^* w_{\lambda,\xi'}^{p^*-1} \phi
\quad \text{in } \R^n
\end{equation*}
is a linear combinations of the functions 
\begin{align}
\label{kernel}
\frac{n-2s}{2} w_{\lambda,\xi'}+ (x-\xi') \cdot \nabla w_{\lambda,\xi'}, \quad \frac{\partial w_{\lambda,\xi'}}{\partial \xi_j'} , \quad 1 \le j \le n .
\end{align}
\end{proposition}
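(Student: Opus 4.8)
The plan is to prove the non-degeneracy result, Proposition~\ref{non-degeneracy}, by first reducing to a fixed normalization and then passing through the Caffarelli--Silvestre extension to convert the nonlocal eigenvalue problem into a degenerate but local one on the half-space, where classical separation-of-variables and ODE analysis can be applied.

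First I would reduce to the standard bubble: by the scaling and translation invariance of $(-\Delta)^s$, it suffices to treat $\lambda=1$, $\xi'=0$, i.e. to classify bounded solutions of $(-\Delta)^s\phi = p^* w^{p^*-1}\phi$ in $\R^n$ with $w(x) = b_{n,s}(1+|x|^2)^{-(n-2s)/2}$; the general kernel in \eqref{kernel} is obtained by pulling back along the same symmetries. Next I would extend $\phi$ to its $s$-harmonic extension $\Phi$ on $\R^{n+1}_+$ via the Poisson kernel \eqref{conv}, and similarly take $W$ the extension of $w$; then $\Phi$ is a bounded weak solution of $\operatorname{div}(y^{1-2s}\nabla\Phi)=0$ in $\R^{n+1}_+$ with the Neumann-type condition $-\lim_{y\to0} y^{1-2s}\partial_y\Phi = p^* w^{p^*-1}\,\operatorname{tr}\Phi$ on $\R^n$. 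Because of the conformal/Emden--Fowler structure of the bubble, the natural move is to conjugate the whole extended problem under the stereographic-type change of variables that sends $\R^{n+1}_+$ to the half-sphere $S^{n+1}_+$ (or equivalently to the cylinder $\R \times S^n_+$), under which $W$ becomes a constant and the linearized operator becomes a constant-coefficient Jacobi-type operator. Then I would decompose $\Phi$ (in the new variables) into spherical harmonics on the sphere $S^n$ of the cross-section; each mode solves a linear second-order ODE in the remaining radial/axial variable with a Bessel-type weight $y^{1-2s}$, and a Frobenius/asymptotic analysis of that ODE — keeping only solutions compatible with boundedness of $\Phi$ at $y\to\infty$ and at the two ends — pins down exactly which eigenvalues of the spherical Laplacian are admissible.

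The key steps, in order: (1) normalize $\lambda=1,\xi'=0$; (2) pass to the extension $\Phi$ and verify the boundary problem and the decay/boundedness estimates $|\Phi(x,y)|\le C$; (3) apply the conformal change of variables to turn the problem into a constant-coefficient one on the half-sphere/cylinder; (4) expand in spherical harmonics of the cross-sectional sphere $S^n$ and obtain the decoupled ODEs; (5) solve each ODE and discard the modes whose solutions violate boundedness; (6) identify the surviving finite-dimensional space — the $\ell=0$ mode giving the dilation generator $\tfrac{n-2s}{2}w + (x-\xi')\cdot\nabla w$ and the $\ell=1$ modes giving the $n$ translations $\partial_{\xi_j'}w$ — and check no $\ell\ge 2$ mode is admissible; (7) conclude by mapping back. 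Throughout I would invoke the explicit classification of entire solutions of \eqref{entire-solutions} from \cite{chLiOu2006} to rule out the possibility of additional kernel elements coming from genuine nonlinear deformations.

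The main obstacle is step (5)–(6): carrying out the ODE/Frobenius analysis for the degenerate weight $y^{1-2s}$ at $y=0$ while simultaneously controlling the behavior at the sphere's boundary (the image of $\{y=0,|x|\to\infty\}$ and the pole), and proving sharply that for every spherical mode $\ell\ge 2$ the only bounded extension is trivial. This is precisely the delicate fractional analogue of the classical $s=1$ non-degeneracy computation, and it is where the machinery of \cite{DadPSi2013} is used; rather than reproving it in full here I would cite that reference for the spectral computation, and present the extension setup and the reduction to the explicit kernel \eqref{kernel} in detail, since those are the parts directly needed in the sequel.
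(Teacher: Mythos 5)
The paper does not actually prove Proposition~\ref{non-degeneracy}: it is stated and then a citation to \cite{DadPSi2013} is given, with no argument supplied in the present text. Your proposal is therefore not being measured against a proof in this paper but against that external reference, and in the end you also defer to \cite{DadPSi2013} for the decisive spectral computation in steps (5)--(6), so in effect you and the authors take the same route: cite \cite{DadPSi2013}. Your surrounding sketch (normalize, pass to the Caffarelli--Silvestre extension, conformal/Emden--Fowler change of variables, spherical-harmonic decomposition, mode-by-mode ODE analysis, identify the $\ell=0$ dilation and $\ell=1$ translation modes and rule out $\ell\ge 2$) is a reasonable outline of that reference's strategy.

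Two small corrections. First, the spherical-harmonic decomposition is over the cross-sectional sphere $S^{n-1}\subset\R^n$ (or its image under the conformal map), not $S^n$; the $\ell=0$ sector is the radial sector giving the dilation generator $\tfrac{n-2s}{2}w+(x-\xi')\cdot\nabla w$, and the $\ell=1$ sector is $n$-dimensional and gives the translations $\partial_{\xi_j'}w$. Second, your closing step --- invoking the Chen--Li--Ou classification \cite{chLiOu2006} of entire solutions of \eqref{entire-solutions} to ``rule out the possibility of additional kernel elements'' --- is a non-sequitur. Classifying the nonlinear solution set shows that the span in \eqref{kernel}, being the tangent space to the family $w_{\lambda,\xi'}$, is \emph{contained} in the kernel (a lower bound). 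It cannot exclude extra kernel elements: a kernel direction of the linearization need not exponentiate to a curve of genuine solutions. The upper bound on the kernel is exactly the content of the nondegeneracy statement, and must come from the ODE/Frobenius analysis for each mode $\ell$, which is what \cite{DadPSi2013} provides.
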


We will also need the following elementary convolution estimate.
\begin{lemma}
\label{lconv}
For $2s<\alpha<n$ there is $C$ such that
\[
\|(1+|x|)^{\alpha-2s} ( \varGamma* h )\|_{L^\infty(\R^n)} \leq C \|(1+|x|)^\alpha h\|_{L^\infty(\R^n)} ,
\]
where $\varGamma$ is defined in \eqref{Gamma}.
\end{lemma}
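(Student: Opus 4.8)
\textbf{Proof proposal for Lemma~\ref{lconv}.}
The plan is to bound the convolution pointwise by splitting the integral according to the geometry relative to the two scales present: the unit scale built into the weight $(1+|x|)$ and the distance $|x|$ from the origin. Write $M := \|(1+|y|)^\alpha h\|_{L^\infty(\R^n)}$, so that $|h(y)| \le M (1+|y|)^{-\alpha}$, and recall $\varGamma(x,\xi) = a_{n,s}|x-\xi|^{2s-n}$. Fixing $x$ with $|x| = r$, I would estimate
\[
  |(\varGamma * h)(x)| \le a_{n,s} M \int_{\R^n} \frac{1}{|x-y|^{n-2s}} \cdot \frac{1}{(1+|y|)^\alpha} \, \di y
\]
and split $\R^n$ into the region near the singularity $A_1 = \{|y-x| < r/2\}$, the region near the origin $A_2 = \{|y| < r/2\}$, and the far region $A_3 = \{|y-x| \ge r/2,\ |y| \ge r/2\}$ (when $r \le 1$ one simply uses a single region $\{|y| \le 1\}$ together with the tail). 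The key arithmetic facts are that the singular kernel $|x-y|^{2s-n}$ is locally integrable since $n - 2s < n$, that the decay $|y|^{-\alpha}$ is integrable at infinity since $\alpha > n$ fails — wait, $\alpha < n$, so $(1+|y|)^{-\alpha}$ is \emph{not} integrable, which is exactly why the kernel decay $|x-y|^{2s-n}$ must be used in $A_3$ to gain the extra integrability; there $2s - n - \alpha < -n$ guarantees convergence.

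Carrying out the three pieces: on $A_1$, bound $(1+|y|)^{-\alpha} \le C(1+r)^{-\alpha}$ (since $|y| \sim r$ there for $r$ large, and trivially for $r$ small) and integrate the kernel over a ball of radius $r/2$ to get a contribution $\lesssim (1+r)^{-\alpha} r^{2s} \lesssim (1+r)^{2s-\alpha}$. On $A_2$, bound the kernel by $C(1+r)^{2s-n}$ (since $|x-y| \ge r/2$ there) and integrate $(1+|y|)^{-\alpha}$ over $\{|y| < r/2\}$; because $\alpha < n$ this integral is $\lesssim r^{n-\alpha}$ (for $r$ large) or $O(1)$ (for $r$ small), yielding again $\lesssim (1+r)^{2s-\alpha}$. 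On $A_3$, use $|y-x| \ge c|y|$ and $|y-x| \ge c r$ to write $|x-y|^{2s-n} \lesssim |y|^{\beta}|x-y|^{2s-n-\beta}$ for a suitable $\beta \in (0, \alpha - 2s)$... more simply, bound $\int_{A_3} |x-y|^{2s-n}(1+|y|)^{-\alpha}\,\di y \le \int_{|y| \ge cr} |y|^{2s-n-\alpha}\,\di y \lesssim (1+r)^{2s-\alpha}$, valid precisely because $2s - n - \alpha < -n$. Combining the three, $|(\varGamma*h)(x)| \lesssim M (1+|x|)^{2s-\alpha}$, which is the claim.

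The main obstacle, such as it is, is bookkeeping rather than substance: one must treat the regimes $r \lesssim 1$ and $r \gg 1$ separately so that $(1+r)$ and $r$ are comparable, and one must check that the exponent inequalities $n - 2s < n$ (local integrability of $\varGamma$), $\alpha < n$ (used in $A_2$), and $\alpha > 2s$ together with $\alpha < n$ so that $2s - n - \alpha < -n$ (used in $A_3$) all hold under the hypothesis $2s < \alpha < n$. All of these are immediate, so the estimate goes through with only elementary spherical-coordinate integrals; there is no delicate point, which is presumably why the paper labels it "elementary."
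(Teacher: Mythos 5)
Your argument is correct, and the paper offers nothing to compare it against: Lemma~\ref{lconv} is stated there as an ``elementary convolution estimate'' and is used without proof, so you are filling a gap rather than paralleling an existing argument. The three-region split ($A_1$ near the singularity of the kernel, $A_2$ near the origin, $A_3$ the far field), together with a separate treatment of $r=|x|\lesssim 1$, is the standard decomposition, and the exponent arithmetic checks out: on $A_1$ the local integrability $2s-n>-n$ gives $r^{2s}$ from the kernel; on $A_2$ the hypothesis $\alpha<n$ makes $\int_{|y|<r/2}(1+|y|)^{-\alpha}\,\di y\sim r^{n-\alpha}$ (note this is the place where $\alpha<n$ is genuinely needed; if $\alpha\geq n$ that integral saturates at $O(1)$ and the piece is only $O(r^{2s-n})$, which is too large); and on $A_3$ the fact that both $|y-x|\gtrsim|y|$ and $|y-x|\gtrsim r$ yields $\int_{r/2}^\infty \rho^{2s-\alpha-1}\,\di\rho\sim r^{2s-\alpha}$, convergent precisely because $\alpha>2s$.

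One stylistic remark: the mid-paragraph self-correction (``the decay $|y|^{-\alpha}$ is integrable at infinity since $\alpha>n$ fails --- wait, $\alpha<n$\ldots'') should not survive into a final write-up. State directly that $(1+|y|)^{-\alpha}$ alone is not integrable at infinity when $\alpha<n$, so on $A_3$ the kernel decay $|x-y|^{2s-n}$ must be combined with it, and the product decays like $|y|^{2s-n-\alpha}$, which is integrable at infinity iff $\alpha>2s$. Also, when bounding $|x-y|^{2s-n}\lesssim|y|^{2s-n}$ on $A_3$, it is worth stating the elementary inequality explicitly (e.g.\ $|x-y|\geq|y|/4$ on $A_3$: if $|y|\leq 2r$ use $|x-y|\geq r/2\geq|y|/4$, and if $|y|>2r$ use $|x-y|\geq|y|-r>|y|/2$), since this is the only slightly non-obvious geometric fact in the proof.
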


\begin{proof}[Proof of Lemma~\ref{a_priori_bound}]
Let  $\xi_i' = \ep^{-\frac{1}{n-2s}}\xi_i \in \Omega_\ep$, $\lambda_i > 0$, $i=1,\dotsc,m$, denote the properly scaled points and the parameters, respectively. Let us first estimate the constants $c_{ij}$. Testing the first equation in \eqref{projected_problem} against $z_{lk}$ and then integrating by parts twice, we deduce that
\[
\sum_{i,j} c_{ij} \int_{\Omega_{\ep}} w^{p^*-1} z_{ij} z_{lk} = \int_{\Omega_{\ep}}[(-\Delta_{\Omega_\ep})^s z_{lk} - (p^* \pm \ep) \bar v^{p^*-1\pm \ep} z_{lk}] \phi -\int_{\Omega_{\ep}} h z_{lk}, \quad \ep > 0.
\]
This defines a linear system in the $c_{ij}$'s which is almost diagonal as $\ep$ approaches to zero, indeed, for $k=1,\dotsc,n$,
\[
  \int_{\Omega_{\ep}} w^{p^*-1} z_{ij} z_{lk} = \delta_{il} \delta_{jk} \int_{\R^n} w_{\lambda_i,0}^{p^*-1} \left( \frac{\partial w_{\lambda_i,0}}{\partial x_k}\right)^2 + o(1) 
\]
and for $k=0$ 
\[
 \int_{\Omega_{\ep}} w^{p^*-1} z_{ij} z_{l0} = \delta_{il} \delta_{j0} \int_{\R^n} w_{\lambda_i,0}^{p^*-1} \left( \frac{n-2s}{2} w_{\lambda_i,0} + x \cdot \nabla w_{\lambda_i,0} \right)^2 + o(1).
\]
On the other hand, we deduce that, for $l = 1,\dotsc,m$,
\[
\int_{\Omega_{\ep}}[(-\Delta_{\Omega_\ep})^s z_{lk} - (p^* \pm \ep) \bar v^{p^*-1\pm \ep} z_{lk}] \phi = o(1) \|\phi\|_{L^\infty(\Omega_{\ep})},
\]
after noticing that $(-\Delta)^s \bar z_{lk} - p^* w_{\lambda_l,0}^{p^*-1} \bar z_{lk} = 0$ (recall the definition of $\bar z_{lk}$ in \eqref{bar-z_a} and \eqref{bar-z_b}), and then applying the dominated convergence theorem. It is also easy to see that
\[
\left| \int_{\Omega_{\ep}} h z_{lk} \right| \le C \|h\|_\alpha.
\]
Therefore, the constants $c_{ij}$ satisfy the estimate
\begin{equation} \label{4.2}
|c_{ij}| \le C \|h\|_\alpha + o(1) \|\phi\|_{L^\infty(\Omega_{\ep})} \quad \text{as } \ep \to 0.
\end{equation}

We proceed by contradiction to prove \eqref{phi_estimate}. By abuse of notation, assume that there are a sequence $\ep \to 0$ and functions $\phi_\ep \in L^\infty(\Omega_{\ep})$, which are solution of \eqref{projected_problem} for some $h_\ep$, and such that 
\[
\|\phi_\ep\|_{L^\infty(\Omega_{\ep})}  = 1, \quad \| h_\ep\|_\alpha \to 0 \quad \text{as } \ep \to 0.
\]
Let us denote by $\xi_{i\ep}' = \ep^{-\frac{1}{n-2s}}\xi_i \in \Omega_\ep$ and $\lambda_{i\ep}$ the corresponding points and scalars associated to the previous sequence. Observe that, by \eqref{4.2},
\begin{equation} \label{c_estimate_2}
  |c_{ij}| \le C \|h_\ep\|_\alpha + o(1) \|\phi_\ep\|_{L^\infty(\Omega_{\ep})} = o(1) \quad \text{as } \ep \to 0.
\end{equation}
We shall prove that 
\begin{equation} \label{lim norm phin}
\lim_{\ep \to 0} \|\phi_\ep\|_{\gamma} =0,
\end{equation}
for any $\gamma = \min\{\alpha,\beta\}-2s$, where  $\beta$ is any number in the interval $(2s,4 s)$. In particular $\|\phi_\ep\|_{L^\infty(\Omega_{\ep})} \to 0$ as $\ep \to 0$, which is a contradiction. 

To show \eqref{lim norm phin}, we first prove that for any $R>0$, 
\begin{equation} \label{conv comp}
\phi_\ep \to 0 \quad \text{uniformly on } B_R(\xi_{i\ep}').
\end{equation}
Suppose that this is not true and translate the system of coordinates so that $\xi_{i\ep}'=0$. Then there is some point $x_\ep \in B_R(0)$ such that 
\begin{align}
\label{phin xn}
|\phi_\ep(x_\ep)|\ge \frac12 .
\end{align}
By passing to a subsequence we can assume that $\phi_\ep$ converges uniformly on compact sets of $\R^n$ to a bounded solution $\phi$ of the problem
\[
(-\Delta)^s \phi = p^* w_{\lambda,0}^{p^*-1}\phi
\quad\text{in } \R^n
\]
for some $\lambda>0$ (recall that $\lambda_{i\ep}$ stay bounded and bounded away from zero by \eqref{constrains}). 
By Proposition~\ref{non-degeneracy},  $\phi$ is a linear combination of the $z_{ij}$'s. We can take the limit in the third equation of \eqref{projected_problem} and use the Lebesgue dominated convergence theorem to find that  $\phi$ satisfies
\[
  \int_{\R^n} w_{\lambda,0}^{p^*-1} z_{ij} \phi = 0 \quad \text{for all } 1\le i \le m, 0 \le j \le n, 
\]
and we deduce from this that $\phi\equiv0$.
But due to \eqref{phin xn} there must be a point $x$ such that $|\phi(x)|\geq \frac12$, which is a contradiction.

On the other hand, we claim that for any $2s \le \beta< 4 s$
\begin{equation} \label{4.3}
\lim_{\ep \to 0} \| (p^* \pm \ep) \bar v^{p^*-1 \pm \ep} \phi_\ep\|_\beta = 0.
\end{equation}
Indeed, observe that $0< (p^* \pm \ep) \bar v^{p^*-1 \pm \ep} \le C \sum_{i=1}^m (1+|x-\xi_{i\ep}'|)^{-4s}$, so
\[
  \| (p^* \pm \ep) \bar v^{p^*-1\pm \ep} \phi_\ep\|_\beta \le C \sup_{x\in\Omega_{\ep}}
  \left(
     \frac{\sum_{i=1}^m 
     (1+|x-\xi_{i,\ep}'|)^{-4s}} {\sum_{i=1}^m (1+|x-\xi_{i,\ep}'|)^{-\beta}} |\phi_\ep(x)|
  \right).
\]
Let $\bar\ep>0$ be given. Then there exists $R>0$ large so that 
\[
  \frac{\sum_{i=1}^m (1+|x-\xi_{i\ep}'|)^{-4s}}{\sum_{i=1}^m (1+|x-\xi_{i\ep}'|)^{-\beta}} 
  \leq \bar\ep \quad \forall x \in \Omega_\ep \setminus \cup_{i=1}^m B_R(\xi_{i\ep}') .
\]
By \eqref{conv comp}, there is $\ep_0$ such that for all $\ep < \ep_0$
\[
\sup_{B_R(\xi_{i\ep}')} |\phi_\ep| \leq \bar \ep.
\]
It follows that for $\ep < \ep_0$,
\[
\sup_{x\in\Omega_\ep}\frac{\sum_{i=1}^m (1+|x-\xi_{i\ep}'|)^{-4s}}
{\sum_{i=1}^m (1+|x-\xi_{i\ep}'|)^{-\beta}} |\phi_\ep(x)| \leq \bar \ep ,
\]
and this proves \eqref{4.3}.

Let us now consider 
\begin{gather*}
  f_{i\ep}(x) = \frac{(1+|x-\xi_{i\ep}'|)^{-\beta}}{\sum_{j=1}^m (1+|x-\xi_{j\ep}'|)^{-\beta}} 
  |(p^* \pm \ep) \bar v^{p^*-1 \pm \ep} \phi_\ep(x)|, \\
  h_{i\ep}(x) = \frac{(1+|x-\xi_{i\ep}'|)^{-\alpha}}{\sum_{j=1}^m (1+|x-\xi_{j\ep}'|)^{-\alpha}} |h_\ep(x)|, \\
  t_{i\ep}(x) = \frac{(1+|x-\xi_{i\ep}'|)^{-\alpha}}{\sum_{j=1}^m (1+|x-\xi_{j\ep}'|)^{-\alpha}} |\sum_{l,k} c_{lk} w^{p^*-1} z_{lk}|,
\end{gather*}
and observe that $ \sum_i f_{i\ep}=| (p^* \pm \ep) \bar v^{p^*-1 \pm \ep} \phi_\ep| $, $ \sum_i h_{i\ep}=|h_\ep|$ and $\sum_i t_{i\ep} = \sum_{l,k} c_{lk} w^{p^*-1} z_{lk}$. We extend the functions $f_{i\ep}$, $h_{i\ep}$ and $t_{i\ep}$ by zero outside $\Omega_{\ep}$.
Let $\psi_{i\ep}$ be the solution to 
\[
(-\Delta)^s \psi_{i\ep} = f_{i\ep} + h_{i\ep}+ t_{i\ep}
\quad\text{in } \R^n,
\]
with $\psi_{i\ep}(x) \to 0$ as $|x| \to \infty$, obtained by convolution with $\varGamma$.

Let $\psi_\ep = \sum_i \psi_{i\ep}$ and observe that $\psi_\ep$ satisfies 
\[
  (-\Delta)^s \psi_\ep = g_\ep \quad \text{in } \R^n
\]
where
\begin{equation*}
  g_\ep(x) =
    \left\{
      \begin{aligned}
        |(p^* \pm \ep) \bar v(x)^{p^*-1\pm \ep} \phi_\ep(x)| &+ |h_\ep(x)| + |\sum_{l,k} c_{lk} w(x)^{p^*-1} z_{lk}(x)| && \text{if } x \in \Omega_{\ep}, \\
        & 0 && \text{if } x \not\in \Omega_{\ep}.
      \end{aligned}
    \right.
\end{equation*}
Using the maximum principle for the extended problem in $\Omega_{\ep} \times (0,\infty)$, Lemma~\ref{maximum_principle}, we find
\begin{align}
\label{ineq phi psi}
|\phi_\ep| \leq \psi_\ep
\quad \text{in } \Omega_{\ep}.
\end{align}
Therefore we can get weighted $L^\infty$ estimates for $\phi_\ep$ by establishing these for $\psi_\ep$.

Note that centering at $\xi_{i\ep}'=0$,
\[
  \|(1+|x|)^\alpha h_{i\ep}\|_{L^\infty} \leq \|h_\ep\|_\alpha 
\]
and therefore, by the previous lemma,
\begin{align} \label{a1}
\| ( 1+|x|)^{\alpha-2s} \varGamma*h_{i\ep}\|_{L^\infty(\R^n)}
\leq \|h_\ep\|_\alpha.
\end{align}
Similarly,
\begin{align} \label{a2}
\| ( 1+|x|)^{\alpha-2s} \varGamma*t_{i\ep}\|_{L^\infty(\R^n)} \leq \sum_{l,k}|c_{lk}|\|w^{p^*-1} z_{lk}\|_\alpha.
\end{align}
Finally, if $2s <\beta < 4s$, using again the previous lemma we find that
\begin{align} \label{a3}
\| ( 1+|x|)^{\beta-2s} \varGamma* f_{i\ep}\|_{L^\infty(\R^n)} \leq \| ( 1+|x|)^{\beta} f_{i\ep}\|_{L^\infty(\R^n)}
\leq \|(p^* \pm \ep) \bar v^{p^*-1 \pm \ep}\phi_\ep\|_\beta .
\end{align}
Hence, by \eqref{a1}--\eqref{a3}
\[
\|\psi_\ep\|_\gamma \leq C (\|(p^* \pm \ep) \bar v^{p^*-1\pm \ep}\phi_\ep\|_\beta +  \|h_\ep\|_\alpha + \sum_{l,k}|c_{lk}|\|w^{p^*-1} z_{lk}\|_\alpha) ,
\]
where $\gamma = \min\{\beta-2s,\alpha-2s\}$.
Using \eqref{ineq phi psi} we get that
\[
\|\phi_\ep\|_\gamma
\leq C (\|(p^* \pm \ep) \bar v^{p^*-1\pm \ep}\phi_\ep\|_\beta +  \|h_\ep\|_\alpha + \sum_{l,k}|c_{lk}|\|w^{p^*-1} z_{lk}\|_\alpha).
\]
But $\|(p^* \pm \ep) \bar v^{p^*-1\pm \ep}\phi_n\|_\beta +  \|h_\ep\|_\alpha + \sum_{l,k}|c_{lk}|\|w^{p^*-1} z_{lk}\|_\alpha  \to 0$ 
as $\ep \to 0$ by \eqref{c_estimate_2} and \eqref{4.3}. This proves \eqref{lim norm phin}.

Finally, \eqref{c_estimate} is a consequence of \eqref{phi_estimate} and \eqref{4.2}.

\end{proof}

As a consequence of Lemma~\ref{a_priori_bound}, we deduce the following proposition.

\begin{proposition} \label{proposition_linear_eq}
Let $\alpha \in (2s,4s)$ and assume constrains \eqref{constrains} hold.
Then there are numbers $\ep_0 > 0$, $C>0$, such that for all $0 < \ep < \ep_0$ and all $h \in C^{\alpha}(\bar \Omega_\ep)$, problem \eqref{projected_problem} admits a unique solution $\phi = L_{\ep}(h)$, and 
\begin{equation}\label{phi_estimate2}
  \|L_{\ep}(h)\|_{\alpha - 2s} \le C \|h\|_\alpha \, ,  
\end{equation}
\begin{equation}
  |c_{ij}| \le C \|h\|_\alpha \, .
\end{equation}
\end{proposition}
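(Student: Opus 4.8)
**The plan is the standard Lyapunov--Schmidt existence-plus-estimate argument once the a priori bound (Lemma~\ref{a_priori_bound}) is in hand.** The a priori estimate gives injectivity and the quantitative control; what remains is to produce the solution. The natural route is to set up the whole problem \eqref{projected_problem} in a Hilbert space so that it becomes an operator equation of the form ``invertible map plus compact perturbation'', to which Fredholm theory applies, and then transfer the estimate \eqref{phi_estimate} (which is in $L^\infty$ norms) to get \eqref{phi_estimate2} (in the weighted $L^\infty$ norm $\|\cdot\|_{\alpha-2s}$).

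First I would recast \eqref{projected_problem} variationally. Let $K_\ep\subset H(\Omega_\ep)$ (equivalently, its extension version $\subset H^s_{0,L}(\C_\ep)$) be the orthogonal complement, with respect to the bilinear form $\int_{\C_\ep} y^{1-2s}\nabla\cdot\nabla\cdot$, of the span of the $z_{ij}$'s; since the functions $w^{p^*-1}z_{ij}$ are exactly the ``Neumann data'' corresponding to $z_{ij}$ via \eqref{extension-b}, the constraint $\int_{\Omega_\ep}\phi\, w^{p^*-1}z_{ij}=0$ is precisely $\phi\in K_\ep$, and the Lagrange-multiplier terms $\sum c_{ij}w^{p^*-1}z_{ij}$ lie in the complement. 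So \eqref{projected_problem} is equivalent to finding $\phi\in K_\ep$ with
\[
  \langle\phi,\psi\rangle - (p^*\pm\ep)\int_{\Omega_\ep}\bar v^{p^*-1\pm\ep}\phi\,\psi = \int_{\Omega_\ep} h\,\psi \qquad\text{for all }\psi\in K_\ep,
\]
i.e.\ $\phi - T_\ep\phi = \tilde h$ in $K_\ep$, where $T_\ep$ is the operator induced by the bounded, compactly-supported-weight potential $(p^*\pm\ep)\bar v^{p^*-1\pm\ep}$ and $\tilde h$ is the Riesz representative of $h$ restricted to $K_\ep$. In the subcritical and critical case the trace/Sobolev embedding $H^s_{0,L}(\C_\ep)\hookrightarrow L^{q}(\Omega_\ep)$ makes $T_\ep$ compact (the potential decays like $(1+|x-\xi_i'|)^{-4s}$), so $I-T_\ep$ is Fredholm of index zero; by Lemma~\ref{a_priori_bound} it is injective (any kernel element would be an $L^\infty$ solution of \eqref{projected_problem} with $h=0$, hence zero), therefore an isomorphism of $K_\ep$, and one gets a unique $\phi=L_\ep(h)$ for every $h$. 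Linearity of $L_\ep$ is automatic; the constants $c_{ij}$ are then recovered from $\phi$ by testing the equation against $z_{lk}$ exactly as in the proof of Lemma~\ref{a_priori_bound}, and \eqref{c_estimate} gives the bound on $|c_{ij}|$.

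It remains to upgrade the bound to the weighted norm \eqref{phi_estimate2}. Here I would run \emph{exactly} the comparison argument from the proof of Lemma~\ref{a_priori_bound}: write the right-hand side of \eqref{projected_problem} as $|(p^*\pm\ep)\bar v^{p^*-1\pm\ep}\phi| + |h| + |\sum c_{lk}w^{p^*-1}z_{lk}|$ split into the pieces $f_{i\ep},h_{i\ep},t_{i\ep}$, dominate $|\phi|$ by the $\varGamma$-convolution potential $\psi_\ep$ via the maximum principle (Lemma~\ref{maximum_principle}), and apply the convolution estimate Lemma~\ref{lconv} with $\alpha\in(2s,4s)$, noting now that the term $\|(p^*\pm\ep)\bar v^{p^*-1\pm\ep}\phi\|_\beta$ with $\beta=\alpha$ is absorbed: by the already-proven $\|\phi\|_{L^\infty}\le C\|h\|_\alpha$ together with the cutoff-splitting at radius $R$ (as in \eqref{4.3}, but now without passing to a limit — just using that $(1+|x-\xi_i'|)^{-4s}/(1+|x-\xi_i'|)^{-\alpha}\to0$ as $|x-\xi_i'|\to\infty$ because $\alpha<4s$, so the far-field contribution is $\le CR^{\alpha-4s}\|\phi\|_{L^\infty}$ while near $\xi_i'$ the weights are comparable) one gets $\|(p^*\pm\ep)\bar v^{p^*-1\pm\ep}\phi\|_\alpha\le C\|\phi\|_{L^\infty}\le C\|h\|_\alpha$. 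Feeding this and \eqref{c_estimate} back yields $\|\phi\|_{\alpha-2s}=\|L_\ep(h)\|_{\alpha-2s}\le C\|h\|_\alpha$, which is \eqref{phi_estimate2}.

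**The main obstacle** is the functional-analytic set-up of the reduced space $K_\ep$ and checking compactness of $T_\ep$ uniformly enough to invoke Fredholm theory on the $\ep$-dependent (and unbounded!) domains $\Omega_\ep$ — in particular making sure the space where one inverts $I-T_\ep$ is the right one so that the resulting $\phi$ is genuinely bounded (so that Lemma~\ref{a_priori_bound} applies to it) and continuous (so that testing against $z_{lk}$ to extract $c_{ij}$ is legitimate). Regularity theory for $(-\Delta_{\Omega_\ep})^s\phi = (\text{bounded, decaying RHS})$ — i.e.\ $h\in C^\alpha$ implies $\phi\in C^{2s+\alpha}_{loc}\cap L^\infty$ — closes this gap, but it is the step that needs the most care; everything else is a transcription of computations already carried out in the proof of Lemma~\ref{a_priori_bound}.
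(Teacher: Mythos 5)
Your proposal follows essentially the same route as the paper: recast \eqref{projected_problem} as a fixed-point problem $\phi=\mathcal{F}_\ep(\phi)+\tilde h$ in the constrained Hilbert subspace of $H(\Omega_\ep)$, observe that $\mathcal F_\ep$ is compact thanks to the compact embedding $H(\Omega_\ep)\hookrightarrow L^2(\Omega_\ep)$, use Lemma~\ref{a_priori_bound} to rule out a nontrivial kernel for $\ep$ small, and conclude existence and uniqueness by Fredholm alternative; your direct bootstrap from $\|\phi\|_{L^\infty}\le C\|h\|_\alpha$ to $\|\phi\|_{\alpha-2s}\le C\|h\|_\alpha$ via the $\varGamma$-convolution comparison fills in what the paper leaves as ``a simple argument by contradiction,'' and it is correct because $\alpha<4s$ makes the potential term absorbable. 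Two small inaccuracies in your closing paragraph are worth noting but do not affect the argument: $\Omega_\ep=\ep^{-1/(n-2s)}\Omega$ is a (large but) bounded domain for each fixed $\ep$, so compactness of the embedding is immediate and no uniformity-in-$\ep$ is needed at the Fredholm stage; and the orthogonality constraint $\int\phi\,w^{p^*-1}z_{ij}=0$ is not literally the $H(\Omega_\ep)$-orthogonal complement of $\mathrm{span}\{z_{ij}\}$ (that would be $\int\phi\,w^{p^*-1}\bar z_{ij}=0$ up to a constant), though the Fredholm argument works equally well in the subspace cut out by the given constraint, which is what the paper uses.
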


\begin{proof}
Let us consider the space
\[
  \mathcal H = \{ \phi \in H(\Omega_\ep): \int_{\Omega_\ep}\phi w^{p^*-1} z_{ij} = 0 \quad \forall i,j\}
\]
endowed with the usual inner product. The weak formulation of problem \eqref{projected_problem} is the following: Find $\phi \in \mathcal H$ such that
\[
  <\phi,\psi> = \int_{\Omega_\ep} (p \pm \ep) \bar v^{p^*-1\pm \ep} \phi \psi + \int_{\Omega_\ep}(h + \sum_{i,j} c_{ij} w^{p^*-1} z_{ij}) \psi \quad \text{for all } \psi \in H. 
\]
With the aid of the Riesz's representation theorem, this equation takes the form
\begin{equation} \label{4.4}
  \phi = \mathcal{F}_\ep (\phi) + \tilde h
\end{equation}
where $\mathcal{F}_\ep$ and $\tilde h$ are operators defined in $L^2(\Omega_\ep)$ by
\begin{gather*}
\mathcal{F}_\ep = (-\Delta_{\Omega_\ep})^{-s} \circ l_1, \\
\tilde h = (-\Delta_{\Omega_\ep})^{-s} \circ l_2;
\end{gather*}
$l_1$ and $l_2$ are the functions defined in $L^2(\Omega_\ep)$ given by
\begin{gather*}
  l_1 (\psi) = \int_{\Omega_\ep} (p^* \pm \ep) \bar v^{p^*-1\pm \ep} \phi, \\
  l_2 (\psi) = \int_{\Omega_\ep}(h + \sum_{i,j} c_{ij} w^{p^*-1} z_{ij}) \psi.
\end{gather*}
$(-\Delta_{\Omega_\ep})^{-s}$ represents the inverse of the fractional Laplacian operator. 

Using the compactness of the embedding of $H(\Omega_\ep)$ into $L^2(\Omega_\ep)$, we deduce that $\mathcal{F}_\ep$ is compact (see for instance \cite[Ch.~VII]{Ad1975} and \cite{DiNPaVa2012,BSV}). Fredholm's alternative guarantees unique solvability of this problem for any $h$ provided that the homogeneous equation
\[
 \phi = \mathcal{F}_\ep (\phi)
\] 
has only the zero solution in $\mathcal H$. Lemma~\ref{a_priori_bound} guarantees that this is true provided that $\ep > 0$ is small enough.

Finally, estimate \eqref{phi_estimate2} is a consequence of \eqref{phi_estimate} and a simple argument by contradiction.
\end{proof}

It is important for later purposes to understand the differentiability of $L_\ep$ on the variables $\xi_i'$ and $\Lambda_i$. To this end, given $\alpha \in (2s,4s)$, we define the space
\[
  L^{\infty}_\alpha (\Omega_\ep) = \{h \in L^{\infty}(\Omega_\ep): \|h\|_\alpha < \infty \},
\]
and consider the map
\[
  (\xi',\Lambda,h) \mapsto S(\xi',\Lambda,h) \equiv L_\ep(h),
\]
as a map with values in $L^{\infty}_\alpha \cap H(\Omega_
\ep)$. 

The proof of the next results are similar to that found in \cite{dPFeMu2003} for the case $m=2$ (see also \cite{dPFeMu2002}). We omit the details.

\begin{proposition}
Under the conditions of the previous proposition, the map $S$ is of class $C^1$ and
\[
  \|\nabla_{\xi',\Lambda} S(\xi',\Lambda,h)\|_{\alpha - 2s} \le C \|h\|_\alpha.
\]
\end{proposition}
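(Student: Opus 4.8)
The plan is to establish $C^1$ regularity of the map $S(\xi',\Lambda,h) = L_\ep(h)$ by differentiating the defining relations of $L_\ep$ formally, then justifying these formal computations via the implicit function theorem in the functional framework already set up. Recall from Proposition~\ref{proposition_linear_eq} that $\phi = L_\ep(h)$ together with the multipliers $c_{ij}$ is characterized as the unique solution of the projected problem \eqref{projected_problem}; equivalently, writing $T_\ep(\xi',\Lambda) \phi = (-\Delta_{\Omega_\ep})^s \phi - (p^* \pm \ep)\bar v^{p^*-1 \pm \ep}\phi$, the pair $(\phi, c) = (\phi, (c_{ij}))$ solves a linear system of the schematic form $T_\ep \phi - \sum c_{ij} w^{p^*-1} z_{ij} = h$ in $\Omega_\ep$ with the orthogonality constraints $\int_{\Omega_\ep} \phi\, w^{p^*-1} z_{ij} = 0$. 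The key observation is that all the data entering this system --- namely $\bar v = \sum v_{\lambda_i,\xi_i'}$, the projected kernel functions $z_{ij}$, and the weights $w^{p^*-1} z_{ij}$ --- depend smoothly (indeed $C^1$, in fact better) on the parameters $(\xi',\Lambda)$ in the relevant $L^\infty_\alpha$-type norms, uniformly for parameters in the admissible range \eqref{constrains}. This smoothness of the building blocks is exactly what was recorded in the Remark following Lemma~\ref{energy_expansion} (the $o(\ep)$ terms are $o(\ep)$ in $C^1$) and is inherited from the explicit formulas for $w_{\lambda,\xi'}$, $\bar z_{ij}$ and the elliptic regularity / maximum principle estimates governing the projections.

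Concretely, I would first set up the solution map abstractly: consider the Banach space $E = (L^\infty_\alpha \cap H(\Omega_\ep)) \times \R^{m(n+1)}$ and the map $\mathcal{G} : (\xi',\Lambda, h, \phi, c) \mapsto \bigl(T_\ep(\xi',\Lambda)\phi - \sum_{i,j} c_{ij} w^{p^*-1} z_{ij} - h,\ (\int_{\Omega_\ep}\phi\, w^{p^*-1} z_{ij})_{i,j}\bigr)$, valued in $L^\infty_{\alpha} \times \R^{m(n+1)}$, whose zero set is precisely the graph of $(\phi,c) = (L_\ep(h), c(h))$. One checks that $\mathcal{G}$ is $C^1$ jointly in all arguments --- the only non-trivial point being the $(\xi',\Lambda)$-dependence of $\bar v^{p^*-1\pm\ep}$ and the $z_{ij}$, which follows from differentiating the explicit bubble formulas and using that $w^{p^*-2}|\nabla_{\lambda,\xi} w|$ and the associated projected corrections are controlled in the weighted norms. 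The partial differential $\partial_{(\phi,c)}\mathcal{G}$ at a solution is exactly the linear operator whose invertibility with uniform bound $C\|\cdot\|_\alpha$ was proved in Lemma~\ref{a_priori_bound} and Proposition~\ref{proposition_linear_eq}. The implicit function theorem then yields that $(\xi',\Lambda, h) \mapsto (\phi, c)$ is $C^1$, and the derivative formula $\partial_{(\xi',\Lambda)}(\phi,c) = -(\partial_{(\phi,c)}\mathcal{G})^{-1} \partial_{(\xi',\Lambda)}\mathcal{G}$ combined with the uniform bound on the inverse and the estimate $\|\partial_{(\xi',\Lambda)}\mathcal{G}\|_{\alpha} \le C\|h\|_\alpha$ (linear in $h$, since $\mathcal G$ is affine in $(\phi,h)$ for fixed parameters) gives $\|\nabla_{\xi',\Lambda} S(\xi',\Lambda,h)\|_{\alpha-2s} \le C\|h\|_\alpha$, where the loss of $2s$ in the weight comes from applying Lemma~\ref{lconv} to the $\varGamma$-convolution representation of the solution, exactly as in the a priori estimate.

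The main obstacle is the bookkeeping in the weighted $L^\infty_\alpha$ norms: one must verify that $\partial_{\xi_i'} \bar v^{p^*-1\pm\ep}$, $\partial_{\Lambda_i}\bar v^{p^*-1\pm\ep}$ and the corresponding derivatives of the projected functions $z_{ij}$ decay fast enough to define bounded operators between the $\|\cdot\|_\alpha$-spaces, uniformly as $\ep \to 0$. For the $\xi$-derivatives this is immediate since $\partial_{\xi_i'} w_{\lambda_i,\xi_i'} = \bar z_{ij}$ decays like $(1+|x-\xi_i'|)^{-n+2s}$; for the $\Lambda_i$ (equivalently $\lambda_i$) derivatives one uses $\bar z_{i0}$, which decays at the same rate. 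The correction terms $\varphi_{\lambda_i,\xi_i'}$ and their parameter derivatives are handled by the maximum principle (Lemma~\ref{maximum_principle}) applied to their extensions, just as in the Lemma preceding Lemma~\ref{energy_expansion}. Since this is entirely parallel to the case $m=2$ treated in \cite{dPFeMu2003} (and $m$ general in \cite{dPFeMu2002}), and the nonlocal operator enters only through the already-established Lemmas~\ref{maximum_principle}, \ref{a_priori_bound} and \ref{lconv}, the argument transcribes without new difficulties; accordingly the details are omitted.
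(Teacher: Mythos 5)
Your proposal reconstructs, correctly, the implicit-function-theorem argument that the paper itself declines to spell out (it refers to the analogous lemmas in del Pino--Felmer--Musso \cite{dPFeMu2002,dPFeMu2003} and omits the details). Setting up the joint map $\mathcal{G}(\xi',\Lambda,h,\phi,c)$, noting that $\partial_{(\phi,c)}\mathcal{G}$ is (a mild inhomogeneous-constraint extension of) the operator whose uniform invertibility was proved in Lemma~\ref{a_priori_bound} and Proposition~\ref{proposition_linear_eq}, and using that $\partial_{(\xi',\Lambda)}\mathcal{G}$ evaluated at $(\phi,c)=(L_\ep(h),c(h))$ is $O(\|h\|_\alpha)$ in the weighted norms, is exactly the strategy used in those references; this is essentially the same approach as the paper.
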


\begin{proposition}\label{proposition_nonlinear_eq}
Assume the conditions of Proposition~\ref{proposition_linear_eq} are satisfied. Then, there is a constant $C>0$  such that, for all $\ep > 0$ small enough, there exists a unique solution
\[
 \phi = \phi(\xi',\Lambda)= \tilde \phi + \psi
\]
to problem \eqref{4.1} with $\psi$ defined by $\psi = L_\ep (R_\ep)$ and for points $\xi', \Lambda$ satisfying \eqref{constrains}. Moreover, the map $(\xi',\Lambda) \mapsto \tilde \phi(\xi',\Lambda)$ is of class $C^1$ for the  $\|\cdot\|_{\alpha-2s}$-norm and
\begin{gather}
\|\tilde \phi\|_{\alpha-2s} \le C\ep^{\min\{p^*,2\}}, \\
\|\nabla_{\xi',\Lambda} \tilde \phi\|_{\alpha-2s} \le C\ep^{\min\{p^*,2\}}.
\end{gather}
\end{proposition}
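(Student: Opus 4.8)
The plan is to solve \eqref{4.1} by a fixed point argument built on the linear theory of Proposition~\ref{proposition_linear_eq}. Writing $v = \bar v + \phi$ and $\phi = \tilde\phi + \psi$ with $\psi := L_\ep(R_\ep)$, the equation for $\tilde\phi$ becomes
\[
  \tilde\phi = L_\ep\big( N_\ep(\tilde\phi + \psi) \big) =: \mathcal{T}_\ep(\tilde\phi),
\]
since $L_\ep$ is linear and $\psi$ already absorbs the $R_\ep$ term. Thus the first step is to show that $\mathcal{T}_\ep$ is a contraction on a small ball of $L^\infty_{\alpha-2s}(\Omega_\ep) \cap H(\Omega_\ep)$. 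For this I need two ingredients: an estimate on $\|R_\ep\|_\alpha$, which by the definition $R_\ep = \bar v^{p^*\pm\ep} - \sum_i w_i^{p^*}$ and the interaction/$\ep$-perturbation expansions (in the spirit of Lemma~\ref{energy_expansion} and \cite{dPFeMu2002,dPFeMu2003}) is of order $\ep^{\min\{p^*,2\}}$ in the $\|\cdot\|_\alpha$-norm for a suitable choice of $\alpha \in (2s,4s)$; and Lipschitz estimates on the nonlinear operator $N_\ep$. Combining $\|R_\ep\|_\alpha \le C\ep^{\min\{p^*,2\}}$ with \eqref{phi_estimate2} gives $\|\psi\|_{\alpha-2s} \le C\ep^{\min\{p^*,2\}}$, which will be the reference scale for the ball in which $\mathcal{T}_\ep$ acts.

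The key pointwise estimates for $N_\ep$ are the standard ones: $|N_\ep(\phi)| \le C(|\phi|^{p^*\pm\ep} + \bar v^{p^*-2\pm\ep}|\phi|^2)$ when $p^* \le 2$ (i.e.\ $n \ge 6s$), and $|N_\ep(\phi)| \le C\bar v^{p^*-1\pm\ep}|\phi|$ type bounds combined with the higher power when $p^* > 2$; together with the analogous bounds for $N_\ep(\phi_1) - N_\ep(\phi_2)$. One then checks, using the explicit decay $w^{p^*-1}(x) \le C(1+|x|)^{-4s}$ and $\bar v \le C\sum_i (1+|x-\xi_i'|)^{-(n-2s)}$ together with the elementary weight arithmetic behind the norm $\|\cdot\|_\alpha$, that $\|N_\ep(\phi)\|_\alpha \le C\|\phi\|_{\alpha-2s}^{\min\{p^*,2\}}$ on the relevant ball, and that the Lipschitz constant of $\phi \mapsto N_\ep(\phi)$ measured from $\|\cdot\|_{\alpha-2s}$ into $\|\cdot\|_\alpha$ is $o(1)$ as the ball radius shrinks with $\ep$. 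Applying $L_\ep$ and \eqref{phi_estimate2} then shows $\mathcal{T}_\ep$ maps a ball of radius $C\ep^{\min\{p^*,2\}}$ into itself and is a contraction there, giving existence and uniqueness of $\tilde\phi$ with $\|\tilde\phi\|_{\alpha-2s} \le C\ep^{\min\{p^*,2\}}$.

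For the $C^1$-dependence on $(\xi',\Lambda)$, I would differentiate the fixed point relation $\tilde\phi = L_\ep(N_\ep(\tilde\phi+\psi))$ formally in $(\xi',\Lambda)$, obtaining
\[
  \nabla_{\xi',\Lambda}\tilde\phi = (\nabla_{\xi',\Lambda} L_\ep)\big(N_\ep(\tilde\phi+\psi)\big) + L_\ep\Big( \partial_\phi N_\ep(\tilde\phi+\psi)\big[\nabla_{\xi',\Lambda}\tilde\phi + \nabla_{\xi',\Lambda}\psi\big] + (\nabla_{\xi',\Lambda} N_\ep)(\tilde\phi+\psi) \Big),
\]
and solving this as a linear fixed point equation for $\nabla_{\xi',\Lambda}\tilde\phi$. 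The $C^1$-dependence of $L_\ep$ on $(\xi',\Lambda)$ with the bound $\|\nabla_{\xi',\Lambda}S(\xi',\Lambda,h)\|_{\alpha-2s}\le C\|h\|_\alpha$ is exactly the content of the previous proposition; the smallness of $\partial_\phi N_\ep$ from before makes the associated operator a contraction; and the explicit formulas \eqref{bar-z_a}--\eqref{bar-z_b} give $\|\nabla_{\xi',\Lambda}\psi\|_{\alpha-2s}$ and $\|(\nabla_{\xi',\Lambda}N_\ep)(\tilde\phi+\psi)\|_\alpha$ of the same order $\ep^{\min\{p^*,2\}}$. This yields the gradient estimate and, via the implicit-function-type argument, the $C^1$ regularity. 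To make all of this rigorous one also passes through the $\delta$-neighbourhood of $\{0\}$ argument truncating $(\bar v+\phi)_+$, exactly as in \cite{dPFeMu2002,dPFeMu2003}, which is why the details are omitted. The main obstacle — and the only genuinely non-routine point — is getting the sharp bound $\|R_\ep\|_\alpha \le C\ep^{\min\{p^*,2\}}$ with $\alpha$ in the admissible range $(2s,4s)$: this requires carefully matching the decay of the error produced by the projection $\varphi_i$ (controlled through \eqref{3.1} and the convolution estimate Lemma~\ref{lconv}) with the weight exponent $\alpha$, and separately handling the $\pm\ep$-perturbation term $\bar v^{p^*\pm\ep}-\bar v^{p^*}$ via the logarithmic expansion; everything else is the standard contraction-mapping machinery.
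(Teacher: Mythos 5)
Your fixed-point skeleton is the right one — it is the contraction argument from \cite{dPFeMu2002,dPFeMu2003} that the paper itself defers to — but there is a genuine quantitative error at the center of your exposition: the claim $\|R_\ep\|_\alpha \le C\ep^{\min\{p^*,2\}}$ is false, and with it the claim that $\|\psi\|_{\alpha-2s} \le C\ep^{\min\{p^*,2\}}$. In fact $\|R_\ep\|_\alpha = O(\ep)$ and no better. Near $\xi_i'$ one has $\bar v = w_i - \varphi_i + \sum_{j\neq i} v_j$, where by \eqref{3.1} the projection error $\varphi_i$ is of order $\ep$ and, since $|\xi_i'-\xi_j'| \sim \ep^{-1/(n-2s)}$, each $v_j$ with $j\neq i$ is also of order $\ep$ there; a first-order Taylor expansion then produces a term $p^* w_i^{p^*-1}\cdot O(\ep)$ in $R_\ep$ that does not cancel, and the subcritical/supercritical perturbation $\bar v^{p^*\pm\ep}-\bar v^{p^*}\sim\pm\ep\,\bar v^{p^*}\log\bar v$ is also $O(\ep)$. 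Measured in $\|\cdot\|_\alpha$ with $\alpha<4s$, the supremum of $w_i^{p^*-1}\ep/(1+|x-\xi_i'|)^{-\alpha}$ is attained near $\xi_i'$ and is of size $\ep$. So the bound you call ``the only genuinely non-routine point'' is not the bound you think it is.

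This matters because the whole purpose of the splitting $\phi=\psi+\tilde\phi$ with $\psi=L_\ep(R_\ep)$ is to separate scales: $\psi$ absorbs the $O(\ep)$ linear response to $R_\ep$, while $\tilde\phi$ is the strictly smaller remainder. The rate $\ep^{\min\{p^*,2\}}$ in the proposition is attached only to $\tilde\phi$, and it arises not from $R_\ep$ but from the structure of $N_\ep$: since $N_\ep$ is the second-order Taylor remainder, one has $\|N_\ep(\tilde\phi+\psi)\|_\alpha \lesssim \|\tilde\phi+\psi\|_{\alpha-2s}^{\min\{p^*,2\}} \sim \ep^{\min\{p^*,2\}}$ once one knows $\|\psi\|_{\alpha-2s}=O(\ep)$ and $\|\tilde\phi\|_{\alpha-2s}\le\|\psi\|_{\alpha-2s}$. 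If $\psi$ were itself $O(\ep^{\min\{p^*,2\}})$ there would be no reason to introduce it separately, and your ``reference scale for the ball'' conflates the two orders. The contraction map $\mathcal{T}_\ep(\tilde\phi)=L_\ep(N_\ep(\tilde\phi+\psi))$ should act on a ball of radius $M\ep^{\min\{p^*,2\}}$, while the argument fed to $N_\ep$ lives at scale $\ep$; your estimate $\|N_\ep(\phi)\|_\alpha\le C\|\phi\|_{\alpha-2s}^{\min\{p^*,2\}}$ must then be applied at scale $\ep$, not at the radius of the ball, to obtain the self-map property. Once these two scales are kept straight the rest of the argument (contraction, differentiation of the fixed-point relation, and the bound on $\nabla_{\xi',\Lambda}\tilde\phi$) goes through exactly as you outline.
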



\section{The reduced functional}

Let us consider points $(\xi',\Lambda)$ which satisfy constrains \eqref{constrains} for some $\delta>0$, and recall that $\xi'= \ep^{-\frac{1}{n-2s}} \xi$. Let $\phi(x) = \phi(\xi',\Lambda)(x)$ be the unique solution of \eqref{4.1} given by Proposition~\ref{proposition_nonlinear_eq}. Let $\Phi$ the $s$-harmonic extension of $\phi$ (recall \eqref{extension}) and consider the functional 
\[
 \J(\xi,\Lambda) = J_{\pm \ep} (\bar V + \Phi),
\]
where $J_{\pm \ep}$ is defined in \eqref{energy_functional}. The definition of $\Phi$ yields that
\[
   \J'(\bar V + \Phi)[\Theta] = 0 \quad \text{for all } \Phi \in H_{0,L}^s(\Omega_\ep)
\]
such that 
\[
  \int_{\Omega_\ep}\theta w^{p^*-1} z_{ij},
\]
where $\theta = \tr|_{\Omega_\ep \times \{0\}} \Theta$.

It is easy to check that
\[
\partial_{x_j}{v_i} = z_{ij} + o(1), \quad \partial_{\Lambda_j} v_i = z_{i0} + o(1),
\]
as $\ep \to 0$. The last part of Proposition~\ref{proposition_nonlinear_eq} gives the validity of the following result, see \cite{dPFeMu2002,dPFeMu2003} for details.
\begin{lemma}\label{reduction}
$v = \bar v + \phi$ is a solution of problem \eqref{main_equation} if and only if $(\xi,\Lambda)$ is a critical point of $\J$. 
\end{lemma}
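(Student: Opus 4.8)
The plan is to prove Lemma~\ref{reduction} via the standard Lyapunov--Schmidt variational argument, exploiting the fact that $\phi = \phi(\xi',\Lambda)$ solves the projected problem \eqref{4.1} by construction. First I would recall that, by Proposition~\ref{proposition_nonlinear_eq}, for points $(\xi',\Lambda)$ satisfying \eqref{constrains} there is a unique $\phi$ with $\int_{\Omega_\ep}\phi\, w^{p^*-1} z_{ij} = 0$ for all $i,j$ such that
\[
(-\Delta_{\Omega_\ep})^s(\bar v + \phi) = (\bar v + \phi)_+^{p^*\pm\ep} + \sum_{i,j} c_{ij}(\xi',\Lambda)\, w^{p^*-1} z_{ij} \quad\text{in }\Omega_\ep,
\]
and that $v = \bar v + \phi$ solves \eqref{main_equation} precisely when all the Lagrange multipliers $c_{ij}(\xi',\Lambda)$ vanish. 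In extended variational language, writing $V = \bar V + \Phi$ for the $s$-harmonic extension, this says $J_{\pm\ep}'(V)[\Theta] = 0$ for every $\Theta \in H^s_{0,L}(\C_\ep)$ whose trace $\theta$ is orthogonal to all $w^{p^*-1} z_{ij}$ in $L^2(\Omega_\ep)$; equivalently $J_{\pm\ep}'(V)$ lies in the span of the functionals $\Theta \mapsto \int_{\Omega_\ep}\theta\, w^{p^*-1}z_{ij}$. Thus $V$ is a genuine critical point of $J_{\pm\ep}$ iff this span component is zero, iff all $c_{ij} = 0$.

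Next I would compute $\nabla_{(\xi,\Lambda)} \J(\xi,\Lambda)$ by the chain rule. Since $\J(\xi,\Lambda) = J_{\pm\ep}(\bar V + \Phi)$ and $V = \bar V + \Phi$ depends on $(\xi,\Lambda)$ through both $\bar V$ and $\Phi$,
\[
\partial_{(\xi_i,\Lambda_i)} \J(\xi,\Lambda) = J_{\pm\ep}'(V)\bigl[\partial_{(\xi_i,\Lambda_i)}\bar V + \partial_{(\xi_i,\Lambda_i)}\Phi\bigr].
\]
Here the key point is that the trace of $\partial_{(\xi_i,\Lambda_i)}\Phi$ need \emph{not} be orthogonal to the $w^{p^*-1}z_{ij}$, so one cannot immediately discard that term; however, differentiating the orthogonality constraint $\int_{\Omega_\ep}\phi\, w^{p^*-1}z_{ij} = 0$ in $(\xi,\Lambda)$ shows that $J_{\pm\ep}'(V)[\partial\Phi]$ reduces to a sum over $i,j$ of $c_{ij}$ times quantities of the form $\int_{\Omega_\ep}\phi\,\partial_{(\xi,\Lambda)}(w^{p^*-1}z_{ij})$, which by the estimates $\|\phi\|_{\alpha-2s}\le C\ep^{\min\{p^*,2\}}$ and $\|\nabla_{\xi',\Lambda}\tilde\phi\|_{\alpha-2s}\le C\ep^{\min\{p^*,2\}}$ from Proposition~\ref{proposition_nonlinear_eq} is negligible compared with the main term. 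Meanwhile, using $\partial_{x_j} v_i = z_{ij} + o(1)$ and $\partial_{\Lambda_j}v_i = z_{i0} + o(1)$, and the fact that $J_{\pm\ep}'(V) = \sum_{l,k} c_{lk}\,(\,\cdot\, , w^{p^*-1}z_{lk})$ on the relevant test directions, one finds
\[
\partial_{(\xi_i,\Lambda_i)} \J(\xi,\Lambda) = \sum_{l,k} c_{lk}\Bigl(\int_{\Omega_\ep} z_{lk}\, w^{p^*-1} z_{ij} + o(1)\Bigr),
\]
i.e.\ $\nabla\J(\xi,\Lambda) = M(\xi',\Lambda)\, c(\xi',\Lambda)$ where $M$ is the Gram-type matrix with entries $\int_{\Omega_\ep} w^{p^*-1}z_{lk}z_{ij} + o(1)$.

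The final step is to observe, exactly as in the proof of Lemma~\ref{a_priori_bound}, that $M(\xi',\Lambda)$ is diagonally dominant and invertible as $\ep\to0$ — its diagonal entries converge to the positive constants $\int_{\R^n}w_{\lambda_i,0}^{p^*-1}(\partial_{x_k}w_{\lambda_i,0})^2$ and $\int_{\R^n}w_{\lambda_i,0}^{p^*-1}(\tfrac{n-2s}{2}w_{\lambda_i,0} + x\cdot\nabla w_{\lambda_i,0})^2$, while the off-diagonal entries are $o(1)$ by the non-degeneracy structure and the uniform separation \eqref{constrains}. Hence $\nabla\J(\xi,\Lambda) = 0$ if and only if $c(\xi',\Lambda) = 0$, which by Proposition~\ref{proposition_nonlinear_eq} and the remark above is equivalent to $v = \bar v + \phi$ solving \eqref{main_equation}. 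I expect the main obstacle to be the careful bookkeeping of the term $J_{\pm\ep}'(V)[\partial_{(\xi,\Lambda)}\Phi]$: one must justify, via differentiation of the constraints together with the $C^1$ bounds on $\tilde\phi$ in Proposition~\ref{proposition_nonlinear_eq}, that it contributes only a perturbation absorbed into the $o(1)$ off-diagonal part of $M$, rather than an independent obstruction; the remaining estimates are routine and parallel \cite{dPFeMu2002,dPFeMu2003}.
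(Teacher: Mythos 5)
Your proposal is correct and follows exactly the Lyapunov--Schmidt variational argument that the paper intends (the paper itself omits the proof and defers to \cite{dPFeMu2002,dPFeMu2003}, but the surrounding text — the identity $\mathcal{J}'_{\pm\ep}(\bar V+\Phi)[\Theta]=0$ for admissible $\Theta$ and the expansions $\partial_{x_j}v_i = z_{ij}+o(1)$, $\partial_{\Lambda_j}v_i = z_{i0}+o(1)$ — is precisely the skeleton you flesh out). Your treatment of the term $J'_{\pm\ep}(V)[\partial_{(\xi,\Lambda)}\Phi]$ by differentiating the orthogonality constraints and absorbing the result into the $o(1)$ perturbation of the Gram matrix is the right way to close the argument, and the near-diagonal invertibility of $M$ then gives $\nabla\mathcal{J}=0 \iff c_{ij}\equiv 0$ as required.
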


Next step is then to give an asymptotic estimate for $\J(\xi,\Lambda)$. As we expected, this functional and $J_\ep (\bar V)$ coincide up to order $o(\ep)$. The steps to proof this result are basically contained in \cite[Sec.~4]{dPFeMu2002} and \cite[Sec.~6]{dPFeMu2003}, we omit the details.

\begin{proposition}\label{reduced_energy_proposition}
We have the expansion
\begin{equation} \label{reduced_energy}
\J(\xi,\Lambda) = m C_{n,s} + [\gamma_{n,s} + \omega_{n,s} \Psi(\xi,\Lambda)] \ep + o(\ep),
\end{equation}
where $o(\ep) \to 0$ as $\ep \to 0$ in the uniform $C^1$-sense with respect to $(\xi,\Lambda)$ satisfying \eqref{xi_separation} and  \eqref{lambda_separation}. The constants in \eqref{reduced_energy} are those in Lemma~\ref{energy_expansion} and 
\[
  \Psi(\xi, \Lambda)= \frac{1}{2} \left\{\sum_{i=1}^m H(\xi_i,\xi_i) \Lambda_i^2-2\sum_{i<j} G(\xi_i,\xi_j)\Lambda_i \Lambda_j \right\} \pm \log(\Lambda_1 \dotsm \Lambda_m).
\]
\end{proposition}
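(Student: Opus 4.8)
The plan is to prove Proposition~\ref{reduced_energy_proposition} by showing that the correction term $\Phi$ contributes only $o(\ep)$ to the energy, so that $\J(\xi,\Lambda) = J_{\pm\ep}(\bar V + \Phi)$ agrees with $J_{\pm\ep}(\bar V)$ up to $o(\ep)$, after which Lemma~\ref{energy_expansion} gives the stated expansion directly. First I would Taylor-expand the functional around $\bar V$, writing
\[
  \J(\xi,\Lambda) = J_{\pm\ep}(\bar V) + \int_0^1 J_{\pm\ep}'(\bar V + t\Phi)[\Phi]\,\di t,
\]
and then integrate by parts once more to rewrite $J_{\pm\ep}'(\bar V + t\Phi)[\Phi]$ in terms of the trace $\phi$ and the nonlinearity on $\Omega_\ep$. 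Using that $\bar V$ is built from the projected bubbles $V_i$ (so that $-\lim_{y\to0}y^{1-2s}\partial_y \bar V = \sum_i w_i^{p^*}$) and that $\phi$ satisfies the orthogonality conditions $\int_{\Omega_\ep}\phi\, w^{p^*-1}z_{ij} = 0$, the leading term in $J_{\pm\ep}'(\bar V)[\Phi]$ is controlled by $\int_{\Omega_\ep} R_\ep\, \phi$ plus genuinely quadratic terms.

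Next I would estimate each piece using the weighted norms from Section~4. The key inputs are: the error estimate $\|R_\ep\|_\alpha \le C\ep^{\min\{p^*/2,1\}}$ type bound (more precisely, $R_\ep = \bar v^{p^*\pm\ep} - \sum_i w_i^{p^*}$ is small both because the $\pm\ep$ exponent perturbation contributes a $\log$ times $\ep$ and because the cross terms between distinct bubbles are lower order), and the bound $\|\tilde\phi\|_{\alpha-2s} \le C\ep^{\min\{p^*,2\}}$ together with $\psi = L_\ep(R_\ep)$ from Proposition~\ref{proposition_nonlinear_eq}, which gives $\|\phi\|_{\alpha-2s} \le C\ep^{\min\{p^*/2,1\}}$ or so. Pairing these in the weighted duality $|\int h\phi| \le C\|h\|_\alpha \|\phi\|_{\alpha-2s}$ (valid because $\alpha>2s$ makes the relevant integrals converge, as already used in Lemma~\ref{a_priori_bound}), one finds that the full correction $\J(\xi,\Lambda) - J_{\pm\ep}(\bar V)$ is $O(\ep^{1+\sigma})$ for some $\sigma>0$, hence $o(\ep)$. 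The nonlinear term $N_\ep(\phi)$ and the quadratic term $(p^*\pm\ep)\bar v^{p^*-1\pm\ep}\phi^2$ are handled the same way, both being at least quadratic in the small quantity $\phi$.

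For the $C^1$ part of the statement, I would differentiate the identity $\J(\xi,\Lambda) = J_{\pm\ep}(\bar V + \Phi)$ in $(\xi,\Lambda)$, exploiting that $\Phi$ depends on these parameters in a $C^1$ way (Proposition~\ref{proposition_nonlinear_eq}) and that $J_{\pm\ep}'(\bar V+\Phi)$ annihilates the tangent directions $z_{ij}$ up to the Lagrange multiplier terms $\sum c_{ij}w^{p^*-1}z_{ij}$; since $|c_{ij}| \le C\|R_\ep\|_\alpha = o(1)$ and $\partial_{\xi_j'}v_i = z_{ij}+o(1)$, $\partial_{\Lambda_j}v_i = z_{i0}+o(1)$, the derivative of the correction term is again $o(\ep)$. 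Combined with the $C^1$ remainder control in Lemma~\ref{energy_expansion} (the Remark following it), this yields the expansion in the uniform $C^1$ sense.

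The main obstacle I expect is the bookkeeping of the error term $R_\ep$ and the nonlinear term in the weighted norms: one must check that the exponent perturbation $\bar v^{p^*\pm\ep} = \bar v^{p^*}(1 \pm \ep \log \bar v + O(\ep^2\log^2\bar v))$ is harmless in the $\|\cdot\|_\alpha$ norm (the logarithmic growth of $\log\bar v$ must be absorbed by the polynomial weights, which works since we can take $\alpha$ strictly below $4s$ with room to spare), and that the interaction terms $w_i^{p^*-1}v_j$ for $i\neq j$ are of size $o(\ep)$ after integration — this is essentially the same computation that produced the Green's function terms in Lemma~\ref{energy_expansion}, so the estimates are available, but care is needed to confirm the remainder is genuinely smaller than $\ep$ and not merely $O(\ep)$. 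Since the analogous computations are carried out in detail in \cite{dPFeMu2002,dPFeMu2003}, I would cite those for the routine parts and only indicate the modifications required by the nonlocal operator and the two boundary conditions.
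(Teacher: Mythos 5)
Your proposal reconstructs the argument that the paper itself delegates to \cite{dPFeMu2002} (Section~4) and \cite{dPFeMu2003} (Section~6), and it follows the same standard route: Taylor-expand $J_{\pm\ep}$ around $\bar V$, integrate by parts to reduce to traces on $\Omega_\ep$, use the orthogonality conditions and the weighted-norm bounds on $R_\ep$ and $\phi$ from Section~4, and then invoke the $C^1$-dependence of $\phi$ on $(\xi',\Lambda)$ for the gradient statement. The only minor inaccuracy is your stated order of $\|R_\ep\|_\alpha$ and hence $\|\phi\|_{\alpha-2s}$ (the standard computation gives $O(\ep)$, possibly with a logarithm, rather than $O(\ep^{\min\{p^*/2,1\}})$), but since the energy correction is the product $\|R_\ep\|_\alpha\|\phi\|_{\alpha-2s}$ plus genuinely quadratic terms, even your more conservative bound still delivers $o(\ep)$, so the argument is sound.
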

This result together Lemma~\ref{energy_expansion} and its remark imply
\begin{equation} \label{reduced_energy_gradient}
  \nabla \J(\xi,\Lambda) = \omega_{n,s} \nabla \Psi(\xi,\Lambda) \ep + o(\ep). 
\end{equation} 
Lemma~\ref{reduction} and this estimate show up the importance of the stable critical points of $\Psi$ to find solutions of \eqref{main_equation}, and thereby to \eqref{1.1}.


\section{Proof of the main results}
 
We will now show how the results of previous sections imply the validity of the theorems stated in Section~1 (in the spectral fractional Laplacian case). Let us first note that Theorem~\ref{bubbling_solutions1} is a direct consequence of Lemma~\ref{reduction} together \eqref{reduced_energy_gradient} and the stability of the set $\A$ of critical points of $\Psi$.


\subsection{The supercritical case, two-bubble solutions}

In this subsection we prove Theorem~\ref{two-bubble_solutions}. So we consider the supercritical case, $p = p^* + \ep$, in \eqref{main_equation}, and look for a two-bubble solution. We set up a min-max scheme to find a critical point of the function $\Psi$ that will be used to find a critical point for the reduced functional $\J$, according to  Proposition~\ref{reduced_energy_proposition} and posterior comments.

In this setting the function $\Psi$ takes the form 
\begin{equation} \label{psi_two-bubble}
\Psi(\xi, \Lambda)= \frac{1}{2} \left\{H(\xi_1,\xi_1) \Lambda_1^2 + H(\xi_2,\xi_2) \Lambda_2^2 - 2 G(\xi_1,\xi_2)\Lambda_1 \Lambda_2 \right\} + \log(\Lambda_1 \Lambda_2),
\end{equation}
where $\xi = (\xi_1,\xi_2) \in \Omega^2$ and $\Lambda = (\Lambda_1,\Lambda_2) \in (0,\infty)^2$ satisfy \eqref{xi_separation} and \eqref{lambda_separation}, respectively. This function is well defined in $(\Omega^2 \setminus \Delta) \times (0,\infty)^2$, where $\Delta$ is the diagonal $\Delta = \{ (\xi_1,\xi_2) \in \Omega^2 : \xi_1 = \xi_2 \}$. We avoid the singularities of $\Psi$ over $\Delta$ by truncating the Green function as follows. For $M>0$, define
\[
  G_M(\xi) = 
  \begin{cases}
    G(\xi) & \text{if } G(\xi) \le M \\
    M      & \text{if } G(\xi) > M,
  \end{cases}
\]
and consider $\Psi_{M,\rho}: \Omega_\rho^2 \times (0,\infty)^2 \rightarrow \R$ given by 
\begin{equation} \label{Psi_modified}
  \Psi_{M,\rho}(\xi,\Lambda) = \Psi(\xi,\Lambda) - 2G_M(\xi)\Lambda_1\Lambda_2 + 2G_M(\xi)\Lambda_1\Lambda_2,
\end{equation}
where $\rho > 0$ and $\Omega_\rho = \{ \xi \in \Omega : \dist (\xi,\partial\Omega) > \rho \}$.
The quantities $M$ and $\rho$ will be chosen later, and we still denote by $\Psi$ the modified function $\Psi_{M,\rho}$.

For every $\xi \in \M^2$ we choose $\Lambda(\xi) = (\Lambda_1(\xi),\Lambda_2(\xi))$ to be a vector defining a negative direction of the quadratic form associated with $\Psi$. Such a direction exists since, by hypothesis, the function defined in \eqref{varphi_function},
\[
  \varphi(\xi_1,\xi_2) = H^{1/2}(\xi_1,\xi_1)H^{1/2}(\xi_2,\xi_2)-G(\xi_1,\xi_2),
\]
is negative over $\M^2$. More precisely, fixed $\xi = (\xi_1,\xi_2) \in \Omega^2$ and considering $\psi$ in \eqref{psi_two-bubble} as a function of $\Lambda = (\Lambda_1,\Lambda_2)$, we have a unique critical point $\Lambda (\xi) = (\Lambda_1(\xi),\Lambda_2(\xi))$ given by
\begin{equation} \label{lambda(xi)}
  \Lambda_1^2 = -\frac{H(\xi_2,\xi_2)^{1/2}}{H(\xi_1,\xi_1)^{1/2}\varphi(\xi_1,\xi_2)}, \quad \Lambda_2^2 = -\frac{H(\xi_1,\xi_1)^{1/2}}{H(\xi_2,\xi_2)^{1/2}\varphi(\xi_1,\xi_2)}. 
\end{equation}
In particular, we have that 
\[
  H(\xi_1,\xi_1) \Lambda_1^2 + H(\xi_2,\xi_2) \Lambda_2^2 - 2 G(\xi_1,\xi_2)\Lambda_1 \Lambda_2 = -1
\]
and 
\[
  \Psi(\xi,\Lambda(\xi)) = -\frac{1}{2} + \log \frac{1}{|\varphi(\xi)|}.
\]

In order to define the min-max class, we consider the set $D = \{ (\xi,\Lambda) \in \Omega_\rho^2 \times (0,\infty)^2 : \varphi(\xi) < -\rho_0 \}$, where $\rho_0 = \minimum \left\{ \frac{1}{2}\exp (-2C_0-1), -\frac{1}{2} \maximum \{ \varphi : \text{in } \M^2 \}  \right\}$, with 
\[
  C_0 = \sup_{(\xi,\sigma) \in \M^2 \times I} \Psi (\xi,\sigma);
\]
$I$ is the interval $(\sigma_0,\sigma_0^{-1})$ where $\sigma_0$ is a small number to be chosen later. With the previous choice of constants, we necessary verify that $\M^2 \times (0,\infty)^2 \subset D$. Now, let $\Z$ be the class of continuous functions $\zeta : \M^2 \times I \times [0,1] \rightarrow D$, such that
\begin{enumerate}[(i)]
\item $\zeta(\xi,\sigma_0,t) = (\xi,\sigma_0 \Lambda(\xi))$, and $\zeta(\xi,\sigma_0^{-1},t) = (\xi,\sigma_0^{-1} \Lambda(\xi))$ for all $\xi \in \M^2$, $t \in [0,1]$, and
\item $\zeta(\xi,\sigma,0) = (\xi,\sigma \Lambda(\xi))$ for all $(\xi,\sigma) \in \M^2 \times I$.
\end{enumerate}  
Then we define the min-max value
\begin{equation}\label{critical_value}
  c(\Omega) = \inf_{\zeta \in \mathcal{Z}} \sup_{(\xi,\sigma) \in \M^2 \times I} \Psi(\zeta(\xi,\sigma,1))
\end{equation}
and we will prove that $c(\Omega)$ is a critical value of $\Psi$.

To prove that $c(\Omega)$ is well-defined, it is necessary an intersection lemma that depends on a topological continuation of the set of solutions of an equation. The idea behind is based on the work of Fitzpatrick, Massab\`o and Pejsachowicz \cite{FiMaPe1983} (see Corollary~7.1 in \cite{dPFeMu2003}). For the ``fractional $\Psi$'', the proof of the next related result  is similar and we omit the details. We point out that the hypothesis in Theorem~\ref{two-bubble_solutions} about the topological nontriviality of the set $\varphi < 0$ is used precisely here. 

\begin{lemma} \label{bound_below}
There is a positive constant $K$, independent of $\sigma_0$, such that
\begin{equation}\label{k_constant}
  \sup_{(\xi,\sigma) \in \M^2 \times I} \Psi(\zeta(\xi,\sigma,1)) \ge -K \quad \text{for all } \zeta \in \Z.
\end{equation}
\end{lemma}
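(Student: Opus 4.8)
The plan is to establish the lower bound \eqref{k_constant} by a degree-theoretic intersection argument, following the scheme of Fitzpatrick--Massab\`o--Pejsachowicz adapted to the present setting as in \cite[Sec.~7]{dPFeMu2003}. Fix an arbitrary $\zeta \in \Z$ and set $\eta = \zeta(\cdot,\cdot,1): \M^2 \times I \to D$. Writing $\eta(\xi,\sigma) = (\eta_1(\xi,\sigma), \eta_2(\xi,\sigma))$ with $\eta_1 \in \Omega_\rho^2$ and $\eta_2 \in (0,\infty)^2$, the boundary conditions (i) force $\eta(\xi,\sigma_0) = (\xi, \sigma_0 \Lambda(\xi))$ and $\eta(\xi,\sigma_0^{-1}) = (\xi, \sigma_0^{-1}\Lambda(\xi))$ for all $\xi \in \M^2$. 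The core idea is that, because of these constraints together with the nontriviality of $\iota^*: H^d(\Omega)\to H^d(\M)$, the image of $\eta$ must contain a point of the ``diagonal-type'' set where $\eta_1(\xi,\sigma)$ coincides with $\xi$ and simultaneously $\eta_2$ lies along $\Lambda(\xi)$; at such a point $\Psi(\eta(\xi,\sigma)) = \Psi(\xi, t\Lambda(\xi))$ for some $t \in [\sigma_0,\sigma_0^{-1}]$, and we can use the explicit formula $\Psi(\xi, t\Lambda(\xi)) = -\tfrac{t^2}{2} + \log\bigl(t^2/|\varphi(\xi)|\bigr)$, which is bounded below uniformly on $\M^2 \times I$ by a constant depending only on $\min_{\M^2}|\varphi|$ and $\max_{\M^2}|\varphi|$ — in particular independent of $\sigma_0$.

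The key steps are as follows. First I would reduce the problem to a finite-dimensional intersection statement: using the compact manifold $\M$ and a tubular neighborhood retraction, replace $\eta_1: \M^2 \times I \to \Omega_\rho^2$ by the composition with a retraction onto $\M^2$, and observe that the hypothesis that $\iota^* : H^d(\Omega) \to H^d(\M)$ is nontrivial, together with the parity condition ($d$ odd or $H^{2d}(\Omega) = 0$), guarantees via a cohomological (cup-product) argument that the map $\xi \mapsto r(\eta_1(\xi,\sigma))$ from $\M^2$ to $\M^2$ cannot avoid the diagonal for every $\sigma$; this is exactly the linking phenomenon isolated in \cite{FiMaPe1983}. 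Second, I would combine this with the behavior in the $\sigma$-variable: the endpoint conditions (i) pin $\eta_2$ to $\sigma_0 \Lambda$ and $\sigma_0^{-1}\Lambda$ at the two ends of $I$, so a continuity/connectedness argument in $\sigma$ produces an interior $\sigma$ at which $\eta_2$ points in the direction $\Lambda(r(\eta_1))$. Third, at the resulting point I would evaluate $\Psi$, using the modified (truncated) $\Psi_{M,\rho}$ — note that on $D$ where $\varphi < -\rho_0$ the truncation parameter $M$ can be chosen so that $G_M = G$ near the relevant configurations, so the explicit formula above applies — and conclude $\Psi(\eta(\xi,\sigma)) \ge -K$ with $K$ depending only on the geometry of $\M$ and $\Omega$, not on $\sigma_0$.

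The main obstacle is making the intersection step rigorous: one must show that for \emph{every} admissible homotopy $\zeta$ the slice map at $t=1$ is forced to hit the relevant ``diagonal'' set, and this requires setting up the correct degree or cohomological linking invariant for maps out of $\M^2 \times I$ that respect the prescribed boundary data on $\partial I$. The subtlety is that $\M^2$ need not be a sphere or have trivial cohomology, so one cannot use Brouwer degree directly; instead the FMP machinery (or equivalently a relative cohomology / cup-length argument exploiting that $\iota^*$ is nontrivial on $H^d$ and the stated parity hypothesis) is needed to see that the linking number of the image with the complement of the diagonal-cone is nonzero. Once this topological fact is in place, the rest — retraction onto the tubular neighborhood, passage to the interior of $I$, and the elementary estimate on $\Psi(\xi,t\Lambda(\xi))$ — is routine, and since the paper explicitly defers the detailed FMP-type continuation argument to \cite{dPFeMu2003}, I would state the reduction carefully and invoke that reference for the continuation lemma, emphasizing only that the independence of $K$ from $\sigma_0$ follows because the estimate on $\Psi$ along the curves $t \mapsto (\xi, t\Lambda(\xi))$ is uniform over $\M^2$ and over $t$ ranging in all of $(0,\infty)$.
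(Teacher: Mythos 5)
Your overall scheme (reduce to an intersection statement \`a la Fitzpatrick--Massab\`o--Pejsachowicz, as in \cite{dPFeMu2003}, then bound $\Psi$ on the intersection set) is the right one, and it is the one the paper is implicitly invoking. But there is a concrete error in the step where you extract the lower bound, and it is not cosmetic: the set you claim the image must hit is too large, and $\Psi$ is \emph{not} bounded below on it.

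You claim the continuation argument forces $\eta(\xi,\sigma)$ to land at a point of the form $(\xi', t\Lambda(\xi'))$ with $t\in[\sigma_0,\sigma_0^{-1}]$, and then you assert that
\[
\Psi(\xi', t\Lambda(\xi')) = -\tfrac{t^2}{2} + \log\!\bigl(t^2/|\varphi(\xi')|\bigr)
\]
is ``bounded below uniformly \dots\ independent of $\sigma_0$''. That assertion is false: the right-hand side tends to $-\infty$ both as $t\to 0^+$ (through the $\log t^2$ term) and as $t\to\infty$ (through the $-t^2/2$ term), and $t$ ranges over the interval $(\sigma_0,\sigma_0^{-1})$ which is unbounded as $\sigma_0\to 0$. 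So a bound from this set alone would necessarily depend on $\sigma_0$, contradicting what the lemma requires. The same error reappears in your closing sentence, where you say the estimate ``is uniform over $t$ ranging in all of $(0,\infty)$''. Relatedly, the continuity/connectedness argument you sketch in $\sigma$ is applied to the wrong quantity: the prescribed boundary values $\zeta(\xi,\sigma_0,t)=(\xi,\sigma_0\Lambda(\xi))$ and $\zeta(\xi,\sigma_0^{-1},t)=(\xi,\sigma_0^{-1}\Lambda(\xi))$ put $\eta_2$ in the \emph{same direction} at both ends of $I$, so an intermediate value argument on direction produces nothing; it is the \emph{magnitude} ratio $|\eta_2|/|\Lambda(r(\eta_1))|$ that goes from $\sigma_0<1$ to $\sigma_0^{-1}>1$ across $I$ and must pass through $1$.

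What the FMP continuation argument is actually needed to deliver (and what the cohomological hypothesis on $\iota^*: H^d(\Omega)\to H^d(\M)$ and the parity condition are there to support) is that the image of $\eta$ must hit the \emph{critical cone} $\{(\xi',\Lambda'): \Lambda'=\Lambda(\xi')\}$ exactly, i.e.\ with $t=1$, not merely a ray through it. At such a point one has the clean identity $\Psi(\xi',\Lambda(\xi'))=-\tfrac12+\log\bigl(1/|\varphi(\xi')|\bigr)$, and since the retraction forces $\xi'$ into (a neighborhood of) $\M^2$ where $\varphi$ is negative and bounded, this is bounded below by a constant $-K$ depending only on $\max_{\M^2}|\varphi|$, independent of $\sigma_0$ and of $M,\rho$. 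Your write-up as it stands would not give a $\sigma_0$-independent $K$, and hence would not justify that $c(\Omega)>-\infty$ uniformly as $\sigma_0$ is sent to $0$.
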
   

The next step is to show that the domain in which $ \Psi $ is defined is closed for the gradient flow of this function. The following lemma support this claim.
\begin{lemma} \label{flow_closed}
Given $c<0$, there exists a sufficiently small number $\rho>0$ satisfying the following: If $(\xi_1, \xi_2) \in \partial(\Omega_\rho \times \Omega_\rho)$ is such that $\varphi(\xi_1, \xi_2) = c$, then there is a vector $\tau$, tangent to $\partial(\Omega_\rho \times \Omega_\rho)$ at the point $(\xi_1, \xi_2)$, so that
\begin{equation} \label{closed_gradient}
  \nabla \varphi (\xi_1, \xi_2) \cdot \tau \neq 0.
\end{equation}
The  number $\rho$ does not depend on $c$.
\end{lemma}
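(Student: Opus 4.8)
The plan is to show that on the boundary of the product domain $\Omega_\rho \times \Omega_\rho$, the gradient of $\varphi$ points strictly \emph{outward} (towards $\partial\Omega$) along whichever factor is on the boundary, so that any level set $\{\varphi = c\}$ with $c<0$ meets $\partial(\Omega_\rho\times\Omega_\rho)$ transversally; equivalently, $\nabla\varphi$ is not normal to $\partial(\Omega_\rho\times\Omega_\rho)$ there. The mechanism is the blow-up of the Robin function $H(\xi,\xi)$ at $\partial\Omega$, established in Lemma~\ref{robin_function_lemma}: $c_1 d(\xi)^{2s-n}\le H(\xi,\xi)\le c_2 d(\xi)^{2s-n}$, so $H^{1/2}(\xi_i,\xi_i)\to +\infty$ as $\xi_i\to\partial\Omega$, while the product $H^{1/2}(\xi_1,\xi_1)H^{1/2}(\xi_2,\xi_2)$ in \eqref{varphi_function} dominates $G(\xi_1,\xi_2)\ge 0$. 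Thus $\varphi(\xi_1,\xi_2)\to +\infty$ as either $\xi_i\to\partial\Omega$ with the other kept in a compact subset; in particular, given $c<0$, for $\rho$ small enough the level set $\{\varphi=c\}$ stays at positive distance from $\partial(\Omega\times\Omega)$, and more importantly we can control the sign of the normal derivative.

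The key steps, in order: (1) Fix a point $(\xi_1,\xi_2)\in\partial(\Omega_\rho\times\Omega_\rho)$ with $\varphi(\xi_1,\xi_2)=c<0$; without loss of generality $d(\xi_1)=\rho$ and $d(\xi_2)\ge\rho$. Since $\varphi(\xi_1,\xi_2)=c<0$ forces $G(\xi_1,\xi_2)>H^{1/2}(\xi_1,\xi_1)H^{1/2}(\xi_2,\xi_2)\ge c_1^{1/2}\rho^{(2s-n)/2}H^{1/2}(\xi_2,\xi_2)$, and $G(\xi_1,\xi_2)\le \varGamma(\xi_1,\xi_2)=a_{n,s}|\xi_1-\xi_2|^{2s-n}$, we get a lower bound $|\xi_1-\xi_2|\le C\rho^{\text{(something)}}$; hence $\xi_2$ is also close to $\partial\Omega$ and $d(\xi_2)$ is controlled by a quantity tending to $0$ with $\rho$. (2) Compute $\partial_{\nu_1}\varphi$ where $\nu_1$ is the outward unit normal to $\Omega_\rho$ at $\xi_1$ (i.e., $\nu_1=-\nabla d(\xi_1)$, pointing towards $\partial\Omega$):
\[
\partial_{\nu_1}\varphi = \tfrac12 H^{-1/2}(\xi_1,\xi_1)H^{1/2}(\xi_2,\xi_2)\,\partial_{\nu_1}H(\xi_1,\xi_1) - \partial_{\nu_1}G(\xi_1,\xi_2).
\]
Using gradient estimates for the Robin function near the boundary — $\partial_{\nu_1}H(\xi_1,\xi_1)\sim c\, d(\xi_1)^{2s-n-1}>0$ and blowing up — together with the bound $|\nabla_{\xi_1}G(\xi_1,\xi_2)|\le C|\xi_1-\xi_2|^{2s-n-1}$, show that the first term dominates and $\partial_{\nu_1}\varphi > 0$. (3) Conclude: since $\partial_{\nu_1}\varphi\ne 0$, $\nabla\varphi(\xi_1,\xi_2)$ has nonzero component along the outward normal of the first factor, hence is not orthogonal to the tangent space of $\partial(\Omega_\rho\times\Omega_\rho)$ — pick $\tau$ to be the projection of $\nabla\varphi$ onto that tangent space, or more simply any tangent vector with $\nabla\varphi\cdot\tau\ne 0$, which exists unless $\nabla\varphi$ is purely normal, excluded by step (2). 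Finally note every bound used depends only on $n$, $s$, $\Omega$ and the structural constants $c_1,c_2,a_{n,s}$, not on $c$, so the threshold $\rho$ can be taken independent of $c$ (for all $c<0$; only the requirement $c<0$ is used, via the sign in \eqref{varphi_function}).

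The main obstacle is step (2): controlling the boundary behaviour of $\nabla_{\xi_1} H(\xi_1,\xi_1)$ and of $\nabla_{\xi_1}G(\xi_1,\xi_2)$ sharply enough that the Robin term wins. This requires the fractional analogue of the classical estimates $H(\xi,\xi)\asymp d(\xi)^{2s-n}$ \emph{together with} its derivative $|\nabla_\xi H(\xi,\xi)|\lesssim d(\xi)^{2s-n-1}$ and, crucially, a \emph{lower} bound on the normal derivative forcing $\partial_{\nu_1}H(\xi_1,\xi_1)$ to be large and of the correct (outward-increasing) sign; these should follow from the extension characterization \eqref{regular-extension} and barrier/Hopf-type arguments as in the proof of Lemma~\ref{robin_function_lemma} in Section~6, possibly comparing with the half-space model where $H$ and $G$ are explicit. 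A secondary subtlety is that when $c$ is very negative both points are forced very close to the boundary, so one must be careful that the competing estimates are uniform in the configuration; this is where the bound $|\xi_1-\xi_2|\le C\rho^{\#}$ from step (1) is used to keep the ratio of the $G$-derivative to the $H$-derivative under control independently of $c$.
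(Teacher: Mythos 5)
There is a genuine logical gap at the heart of your argument. In step (2) you compute $\partial_{\nu_1}\varphi$, the \emph{normal} directional derivative of $\varphi$ at the boundary, and show it is nonzero; in step (3) you then claim this ``excludes'' $\nabla\varphi$ from being purely normal. It does not: a nonzero normal component of $\nabla\varphi$ is perfectly compatible with the tangential component being zero --- in fact, $\nabla\varphi$ being \emph{purely} normal means precisely that its normal component is nonzero and its tangential component vanishes. Concretely, if $\varphi$ were (locally) a function of $d(\xi_1)$ alone, then $\partial_{\nu_1}\varphi\neq 0$ yet $\nabla\varphi\cdot\tau = 0$ for \emph{every} tangent $\tau$, which is exactly the situation the lemma must rule out. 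What needs to be shown is that the tangential component of $\nabla\varphi$ on $\partial(\Omega_\rho\times\Omega_\rho)$ is nonzero; since a vector $(\tau_1,\tau_2)$ with $\tau_1\in T_{\xi_1}\partial\Omega_\rho$ and $\tau_2$ arbitrary is tangent, the natural thing to prove is that either the tangential part of $\nabla_{\xi_1}\varphi$ is nonzero, or that $\nabla_{\xi_2}\varphi\neq 0$. Your computation of $\partial_{\nu_1}\varphi$ bears on neither.

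The paper takes the latter route: after the rescaling $\Omega^\rho=\rho^{-1}\Omega$ centered at the boundary point, which sends $\xi_1$ to a fixed point $(0_{\mathbb{R}^{n-1}},1)$, one shows (as in your step (1)) that $\bar\xi_2$ stays bounded, passes to the half-space limit where $G_+$ and $H_+$ are explicit, deduces $\varphi_+(\bar\xi_1,\bar\xi_2)=0$ from $\varphi_\rho=c\rho^{n-2}\to 0$, and then verifies $\nabla_{\xi_2}\varphi_+(\bar\xi_1,\bar\xi_2)\neq 0$ by a two-case computation (a horizontal direction if $\bar\xi_2'\neq 0$, else the vertical direction), giving a $\tau$ of the form $(0_{\mathbb{R}^n},\tau_2)$, automatically tangent. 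If you want to salvage your approach you would have to replace the normal-derivative computation with an estimate on the \emph{$\xi_2$-gradient} (or the tangential $\xi_1$-gradient) of $\varphi$, and the explicit half-space model is the clean way to control that; the boundary blow-up estimates from Lemma~\ref{robin_function_lemma} alone don't see the direction of $\nabla\varphi$, only its magnitude in the normal direction.
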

\begin{proof}
Fix $c<0$ and, for $\rho > 0$ small, suppose that $\xi_{1\rho} \in \partial \Omega_\rho$, $\xi_{2\rho} \in \Omega_\rho$ and $\varphi(\xi_{1\rho}, \xi_{2\rho}) = c$. After a rotation and a translation, we can assume that $\xi_{1\rho} = (0_{\R^{n-1}},\rho)$ and that the closest point of $\partial \Omega$ to $\xi_{1\rho}$ is the origin. To analyze the behavior of $\nabla \varphi (\xi_{1\rho}, \xi_{2\rho})$ as $\rho \to 0$, is convenient to consider the enlarged domain
\[
  \Omega^\rho = \rho^{-1}\Omega,
\]
and use the notation $\bar \xi = \rho^{-1}\xi \in \Omega^\rho$ for $\xi \in \Omega$. Observe that the associated Green function of $\Omega^\rho$ and its regular part are given by
\[
  G_\rho(\bar \xi_1,\bar \xi_2) = \rho^{n-2} G(\xi_1,\xi_2), \quad H_\rho(\bar \xi_1,\bar \xi_2) = \rho^{n-2} H(\xi_1,\xi_2),
\]
and then 
\[
  \varphi_\rho (\bar \xi_1,\bar \xi_2) = \rho^{n-2} \varphi (\xi_1,\xi_2),
\]
where
\[
\varphi_\rho (\bar \xi_1,\bar \xi_2) = H_\rho^{1/2}(\bar \xi_1,\bar \xi_1) H_\rho^{1/2}(\bar \xi_2,\bar \xi_2) - G_\rho (\bar \xi_1,\bar \xi_2).
\]

We denote by $\bar \xi_1$ the point $\bar \xi_{1\rho} = (0_{\R^{n-1}},1)$. We claim that $|\bar \xi_1 - \bar \xi_{2\rho}| = O(1)$ as $\rho \to 0$. Indeed, by contradiction, suppose the this is not true. Arguing as in the proof of Lemma~\ref{robin_function_lemma}, we deduce that there exists a constant $C>0$ such that
\begin{gather*}
 H_\rho^{1/2}(\bar \xi_1,\bar \xi_1) H_\rho^{1/2}(\bar \xi_{2\rho},\bar \xi_{2\rho}) \ge C |\bar \xi_1 - \bar \xi_{2\rho}|^{-(n-2)/2}, \\
 G_\rho (\bar \xi_1,\bar \xi_{2\rho}) \le C |\bar \xi_1 - \bar \xi_{2\rho}|^{-(n-2)}.
\end{gather*}
 Therefore, for $\rho>0$ sufficiently small, 
 \[
   C |\bar \xi_1 - \bar \xi_{2\rho}|^{-(n-2)/2} \le \varphi_\rho (\bar \xi_1,\bar \xi_{2\rho}) = c \rho^{n-2},
 \] 
which is a contradiction since $c<0$. We note that this conclusion only depends on the fact that $c$ is negative.

Then we can assume that $\bar \xi_{2\rho} \to \bar \xi_2$ as $\rho \to 0$, for certain $\bar \xi_2 = (\bar \xi_2',\bar \xi_2^{n})$, where $\xi_2^{n} \ge 1$. Observe that as $\rho \to 0$ the domain $\Omega^\rho$ becomes the half-space $\R^n_+ = \{ \xi = (\xi^ 1,\dotsc,\xi^n) \in \R^n : \xi^n > 0 \}$. Arguing again as in the proof of Lemma~\ref{robin_function_lemma}, the functions $H_\rho$ and $G_\rho$ converge to the corresponding ones $H_+$ and $G_+$ in the half-space $\R^n_+$, namely
\begin{gather*}
H_+(\xi_1,\xi_2) = \frac{a_{n,s}}{|\xi_1 - \hat \xi_2|^{n-2s}} \quad \text{and} \\
G_+(\xi_1,\xi_2) = a_{n,s} \left( \frac{1}{|\xi_1 - \xi_2|^{n-2s}} - \frac{1}{|\xi_1 - \hat \xi_2|^{n-2s}} \right);
\end{gather*}
where $\hat \xi_2 = (\xi_2',-\xi_2^n)$, for $\xi_2 = (\xi_2',\xi_2^n)$. Similarly $\varphi_\rho$ and its gradient converge to $\varphi_+$ and its gradient, respectively, where 
\[
\varphi_+ (\xi_1,\xi_2) = H_+^{1/2}(\xi_1,\xi_1) H_+^{1/2}(\xi_2,\xi_2) - G_+ (\xi_1,\xi_2).
\]

Now, since $\varphi_\rho (\bar \xi_1,\bar \xi_{2\rho}) = c \rho^{n-2}$, we have
\begin{equation} \label{varphi_xibar}
\varphi_+(\bar \xi_1,\bar \xi_2) = 0.
\end{equation}
Assume first that $\bar \xi_2' \neq 0$. Then, for the direction $ \tau = (0_{\R^n},\xi_2',0) $,
\[
  \nabla \varphi_+(\bar \xi_1,\bar \xi_2) \cdot \tau = -(n-2s)a_{n,s} \left( \frac{1}{|\bar \xi_1 - \bar \xi_2|^{n-2s+2}} - \frac{1}{|\bar \xi_1 - \hat{\bar \xi}_2|^{n-2s+2}} \right) |\bar \xi_2'|^2 \neq 0,
\]
since $|\bar \xi_1 - \bar \xi_2| < |\bar \xi_1 - \hat{\bar \xi}_2|$. Observe that for $\rho$ sufficiently small, $\tau = (0_{\R^n},\xi_2',0)$ is tangent to $\partial (\Omega_\rho \times \Omega_\rho)$ in $(\xi_{1\rho}, \xi_{2\rho})$. Assume now that $\bar \xi_2' = 0$, and suppose that $\bar \xi_2 = (0,\theta_0)$ with $\theta_0 > 1$. Consider the function $\varphi_+(\theta) = \varphi_+(\bar \xi_1,0_{\R^{n-1}},\theta) = \varphi_+(0_{\R^{n-1}},1,0_{\R^{n-1}},\theta)$; let us prove that $\varphi'_+(\theta_0) \neq 0$. Indeed, 
\[
a_{n,s}^{-1} \varphi_+(\theta) = \frac{1}{2^{n-2s} \theta^{(n-2s)/2}} - \frac{1}{(\theta-1)^{n-2s}} + \frac{1}{(\theta+1)^{n-2s}},
\]
and thereby 
\begin{equation}\label{varphi_prime}
a_{n,s}^{-1} \varphi_+'(\theta_0) = (n-2s) \left[  \frac{1}{(\theta_0-1)^{n+1-2s}} - \frac{1}{(\theta_0+1)^{n+1-2s}} + \frac{1}{2^{n+1-2s} \theta_0^{(n+2-2s)/2}} \right]
\end{equation}
On the other hand, \eqref{varphi_xibar} implies $\varphi_+(\theta_0) = 0$. Thus
\[
\frac{1}{2^{n-2s} \theta_0^{(n-2s)/2}} = \frac{1}{(\theta_0-1)^{n-2s}} - \frac{1}{(\theta_0+1)^{n-2s}},
\]
and putting this in \eqref{varphi_prime}, we deduce that $\varphi'_+(\theta_0) > 0$, as claimed. And then we find that 
\[
\nabla \varphi_+(\bar \xi_1,\bar \xi_2) \cdot \tau > 0,
\]
where $\tau = (0_{\R^n},0_{\R^{n-1}}, 1)$. Observe that $\tau$ is tangent to $\partial (\Omega_\rho \times \Omega_\rho)$ in $(\xi_{1\rho}, \xi_{2\rho})$, and the proof is complete.

\end{proof}

Finally, we have a critical value for $\Psi$.

\begin{proposition}
The number $c(\Omega)$ in \eqref{critical_value} is a critical value for $\Psi$ in $D$.
\end{proposition}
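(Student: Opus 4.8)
The plan is to run the classical min-max argument of \cite{dPFeMu2003} (in the spirit of Y.Y.~Li \cite{Li1997}), using Lemmas~\ref{bound_below} and~\ref{flow_closed} as the two genuinely nonlocal inputs; throughout, ``$\Psi$'' denotes the modified function $\Psi_{M,\rho}$, with $M$ large and $\rho$ small fixed once and for all (the latter also so small that $\M\subset\Omega_\rho$ and that Lemma~\ref{flow_closed} applies). Since $D$ is open in $\Omega^2\times(0,\infty)^2$, ``critical value in $D$'' means an ordinary critical point of $\Psi$ at that level. First I would check that $c(\Omega)$ is a genuine finite number. The lower bound $c(\Omega)\ge -K$, with $K$ independent of $\sigma_0$, is exactly Lemma~\ref{bound_below}, and it is here that the topological assumption on $\{\varphi<0\}$ in Theorem~\ref{two-bubble_solutions} enters. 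For the upper bound, the ``static'' homotopy $\zeta_0(\xi,\sigma,t)=(\xi,\sigma\Lambda(\xi))$ belongs to $\Z$ --- it satisfies (i) and (ii) by inspection and takes values in $D$ since $\varphi<-\rho_0$ on $\M^2$ by the choice $\rho_0\le-\tfrac12\max_{\M^2}\varphi$ --- and $\Psi(\zeta_0(\xi,\sigma,1))=\Psi(\xi,\sigma\Lambda(\xi))$ gives $c(\Omega)\le C_0$. Using $\sup_{\sigma\in I}(-\tfrac{\sigma^2}{2}+2\log\sigma)\le-1+\log 2$ together with $\min_{\M^2}|\varphi|\ge 2\rho_0$ one sharpens this to $c(\Omega)\le C_0\le-1-\log\rho_0$, a bound I will use below.

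Next I would fix $\sigma_0$ small, exploiting that $K$ does not depend on it. Along the profile $\sigma\mapsto\Psi(\xi,\sigma\Lambda(\xi))=-\tfrac{\sigma^2}{2}+2\log\sigma-\log|\varphi(\xi)|$, both endpoint values $\Psi(\xi,\sigma_0\Lambda(\xi))$ and $\Psi(\xi,\sigma_0^{-1}\Lambda(\xi))$ tend to $-\infty$ as $\sigma_0\to0$, uniformly for $\xi\in\M^2$ (since $|\varphi|$ is bounded and bounded away from $0$ on the compact set $\M^2$), so we may take $\sigma_0$ so small that $\Psi(\xi,\sigma_0^{\pm1}\Lambda(\xi))<c(\Omega)-1$ for every $\xi\in\M^2$. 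Then, arguing by contradiction, I would assume $c(\Omega)$ is a regular value of $\Psi$ in $D$ and invoke a deformation lemma for $\Psi|_D$: there exist $\kappa\in(0,\tfrac12)$ and a continuous family $\eta_t:D\to D$, $t\in[0,1]$, with $\eta_0=\mathrm{id}$, $\eta_t=\mathrm{id}$ on $\{\Psi\le c(\Omega)-2\kappa\}$ for all $t$, and $\eta_1(\{\Psi\le c(\Omega)+\kappa\})\subset\{\Psi\le c(\Omega)-\kappa\}$. Picking $\zeta\in\Z$ with $\sup_{\M^2\times I}\Psi(\zeta(\cdot,\cdot,1))\le c(\Omega)+\kappa$ and setting $\zeta'(\xi,\sigma,t)=\eta_t(\zeta(\xi,\sigma,t))$, one checks $\zeta'\in\Z$: it is continuous with values in $D$; it satisfies (ii) because $\eta_0=\mathrm{id}$; and it satisfies (i) because, by the choice of $\sigma_0$, the points $(\xi,\sigma_0^{\pm1}\Lambda(\xi))$ lie in $\{\Psi\le c(\Omega)-2\kappa\}$ and are therefore fixed by every $\eta_t$. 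But then $\sup_{\M^2\times I}\Psi(\zeta'(\cdot,\cdot,1))\le c(\Omega)-\kappa$, contradicting the definition of $c(\Omega)$ as an infimum; hence $c(\Omega)$ is a critical value.

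The main obstacle --- essentially the only point requiring work --- is the deformation lemma itself, since $D$ is neither complete nor compact and the descent flow must be prevented from leaving it. I would generate the flow from a locally Lipschitz pseudo-gradient field for $\Psi$, cut off near the critical set and near $\{\Psi\le c(\Omega)-2\kappa\}$, and modified near the lateral parts of $\partial D$ so as to be tangent to them; three escape mechanisms must be ruled out on the active band $\{c(\Omega)-2\kappa\le\Psi\le c(\Omega)+\kappa\}\cap D$. First, $\Lambda$ reaching $\partial\big((0,\infty)^2\big)$ (including at infinity): there $|\Psi|\to\infty$, since $\log(\Lambda_1\Lambda_2)$ forces $\Psi\to-\infty$ when some $\Lambda_i\to0$ or when $\Lambda\to\infty$ inside the negative cone of the quadratic form, while the quadratic form forces $\Psi\to+\infty$ otherwise, so the band keeps $\Lambda$ in a compact subset of $(0,\infty)^2$. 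Second, $\xi$ reaching $\partial(\Omega_\rho\times\Omega_\rho)$: I would render the field tangent to it; Lemma~\ref{flow_closed} guarantees, for $\rho$ small, that $\Psi$ restricted to $\partial(\Omega_\rho\times\Omega_\rho)\times(0,\infty)^2$ has no critical point --- at such a point one has $\Lambda=\Lambda(\xi)$, whence by the envelope identity $\nabla_\xi\Psi(\xi,\Lambda(\xi))=-\nabla_\xi\log|\varphi(\xi)|$ is parallel to $\nabla\varphi(\xi)$, so criticality of the restriction would force $\nabla\varphi$ to be normal to $\partial(\Omega_\rho\times\Omega_\rho)$, contradicting \eqref{closed_gradient} --- so the tangent-modified flow still strictly decreases $\Psi$ and keeps $\xi\in\overline{\Omega_\rho}\times\overline{\Omega_\rho}$. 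Third, $\varphi$ increasing up to $-\rho_0$: again render the field tangent to $\{\varphi=-\rho_0\}$; every critical point of $\Psi$ on $\{\varphi=-\rho_0\}\times(0,\infty)^2$ has $\Lambda=\Lambda(\xi)$ and hence $\Psi=-\tfrac12-\log\rho_0$, which by the bound $c(\Omega)\le-1-\log\rho_0$ of the first step is $\ge c(\Omega)+\tfrac12$, above the active band, so the tangent modification does not trap the descent and $\varphi<-\rho_0$ is maintained. With $D$ thus rendered invariant along the (modified, truncated) negative flow, the standard quantitative-descent argument produces the family $\eta_t$. All of this follows \cite[Sec.~7]{dPFeMu2003} and \cite{dPFeMu2002} essentially verbatim, the only new ingredients being Lemma~\ref{flow_closed} and the elementary bookkeeping around $\{\varphi=-\rho_0\}$; nothing in it is sensitive to the nonlocal nature of $\As$.
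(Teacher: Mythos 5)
Your proposal follows the same min-max/deformation strategy as the paper, invoking Lemma~\ref{bound_below} for finiteness of $c(\Omega)$ and Lemma~\ref{flow_closed} for the flow not escaping through $\partial(\Omega_\rho\times\Omega_\rho)$; the paper packages the argument a bit differently (it first proves a ``boundary non-degeneracy'' statement --- at any $\partial D$ point approached at level $c(\Omega)$ there is a tangent direction $T$ with $\nabla\Psi\cdot T\ne0$, using the $\Lambda$-direction when $\nabla_\Lambda\Psi\ne0$ and Lemma~\ref{flow_closed} when $\Lambda=\Lambda(\xi)$ --- and then quotes the Palais--Smale condition, whereas you argue by contradiction and spell out the three escape mechanisms for the descent flow), but the content and the use of the two key lemmas coincide. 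Your handling of the $\{\varphi=-\rho_0\}$ wall via the explicit upper bound $c(\Omega)\le-1-\log\rho_0$ is logically equivalent to the paper's use of the definition of $\rho_0$ to conclude $\varphi(\xi_0)\le-2\rho_0<-\rho_0$.

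The one substantive point you leave unaddressed is the choice of the truncation level $M$ in $\Psi_{M,\rho}$: your envelope computations ($\Lambda_0=\Lambda(\xi_0)$, $\Psi=-\tfrac12-\log|\varphi|$, $\nabla_\xi\Psi$ parallel to $\nabla\varphi$) implicitly use the untruncated $G$, not $G_M$, and are only valid where $G_M=G$. The paper closes this by fixing $M\ge\exp(2K-1)+M_\rho$ (with $M_\rho=\max_{\Omega_\rho}H(\xi,\xi)$) and observing that the lower bound $c(\Omega)\ge-K$ from Lemma~\ref{bound_below}, combined with $\Psi(\xi_0,\Lambda(\xi_0))=-\tfrac12-\log|\varphi(\xi_0)|=c(\Omega)$, forces $|\varphi(\xi_0)|\le\exp(K-\tfrac12)$ and hence $G(\xi_0)\le M_\rho+|\varphi(\xi_0)|\le M$, so $G_M=G$ near the relevant point. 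Saying ``$M$ large fixed once and for all'' without this estimate leaves a gap, since on $D$ the function $G$ is genuinely unbounded (near the diagonal) and the critical point produced by the deformation argument is a priori one of $\Psi_{M,\rho}$, not of the original $\Psi$. You should add this verification; everything else in your write-up is sound.
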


\begin{proof}
Let us first prove that for every sequence $\{\xi_j,\Lambda_j\}_j \subset D$ such that $(\xi_j,\Lambda_j) \to (\xi_0,\Lambda_0) \in \partial D$ and $\psi(\xi_j,\Lambda_j) \to c(\Omega)$ there is a vector $T$, tangent to $\partial D$ at $(\xi_0,\Lambda_0)$, such that
\begin{equation} \label{tangent}
\nabla \psi (\xi_0,\Lambda_0) \cdot T \neq 0.
\end{equation}
Now, since the function $\Psi (\xi,\Lambda)$ tends to $-\infty$ as $\Lambda$ is close to $\partial (0,\infty)^2$, we can assume that $\Lambda_0 \in (0,\infty)^2$, $\xi_0 \in \bar \Omega_\rho \times \bar \Omega_\rho$ and $\varphi(\Lambda_0) \le -\rho_0$. If $\nabla_\Lambda \psi(\xi_0,\Lambda_0) \neq 0$, choose $T = (0_{\R^{2n}},\nabla_\Lambda \psi(\xi_0,\Lambda_0))$. Otherwise, if $\nabla_\Lambda \psi(\xi_0,\Lambda_0) = 0$ then $\Lambda_0 = \Lambda(\xi_0)$ according to \eqref{lambda(xi)}, and
\begin{equation} \label{5}
\psi(\xi_0,\Lambda_0) = -\frac{1}{2} + \log \frac{1}{|\varphi(\xi_0)|}.
\end{equation}
Thus, $\varphi(\xi_0) = - \exp (-2c(\Omega)-1) \le -2\rho_0 <\rho_0$, so that $\xi_0 \in \partial (\Omega_\rho \times \Omega_\rho)$. Choosing $\rho > 0$ as in the previous lemma and then applying \eqref{closed_gradient}, we deduce \eqref{tangent} for certain direction $T$. To conclude, we choose $M > 0$ big enough: Let $M_\rho = \maximum \{ H(\xi,\xi) : \xi \in \Omega_\rho \}$, and consider $M \ge \exp (2K-1) + M_\rho$, where $K$ is the number found in Lemma~\ref{bound_below}. Using \eqref{k_constant} and \eqref{5}, we deduce that $G(\xi_0) \le M$ and thus $G_M = G$ near to $\xi_0$.

We can now define an appropriate gradient flow that will remain in $D$ at level $c(\Omega)$. Finally, let us check the Palais-Smale condition in $D$ at level $c(\Omega)$. Indeed, given the sequence $\{\xi_j,\Lambda_j\}_j \subset D$ satisfying $\psi(\xi_j,\Lambda_j) \to c(\Omega)$ and $\nabla \psi (\xi_j,\Lambda_j) \to 0$, we have that $\{\xi_j,\Lambda_j\}_j$ has a convergent subsequence since $\{ \Lambda_j \}_j$ is in fact bounded.
\end{proof}

\begin{proof}[Proof of Theorem~\ref{two-bubble_solutions}: the spectral fractional Laplacian case]
To complete the proof of the theorem, let us show how to find a critical value for $\J$ from the one for $\Psi$, namely $c(\Omega)$. We consider the domain $D_{r,R} = \Omega_\rho \times \Omega_\rho \times [r,R]^2 \cap D$. As we did with $\Psi$ at the beginning of this subsection, the functional $\J$ can be extended to all $D_{r,R}$ keeping the relations \eqref{reduced_energy} and \eqref{reduced_energy_gradient} over $D_{r,R}$.

By the Palais-Smale condition for $\Psi$ proved in the previous proposition, there are numbers $R>0$, $c>0$ and $\alpha_0 > 0$ such that for all $0<\alpha<\alpha_0$, and $(\xi,\Lambda) \in D_{r,R}$ satisfying $\Lambda > R$ and $c(\Omega)-2\alpha \le \Psi(\xi,\Lambda) \le c(\Omega)+2\alpha$ we have $|\nabla \Psi(\xi,\Lambda)| \ge c$. On the other hand, the min-max characterization of $c(\Omega)$ provides the existence of a $\zeta \in \Z$ such that
\[
  c(\Omega) \le \sup_{(\xi,\sigma) \in \M^2 \times I} \Psi(\zeta(\xi,\sigma,1)) \le c(\Omega)+\alpha.
\]
Choosing $r$ small and $R$ large if necessary, we can assume that $\zeta(\xi,\sigma,1) \in D_{r/2,R/2} \subset D_{r,R}$ for all $(\xi,\sigma) \in \M^2 \times I$.

We define a min-max value for $\J$ in the following way: Consider $\eta : D_{r,R} \times [0,\infty] \to D_{r,R}$ being the solution of the equation $\dot \eta = - h(\eta) \nabla \J (\eta)$ with initial condition $\eta(\xi,\Lambda,0) = (\xi,\Lambda)$. Here the function $h$ is defined in $D_{r.R}$ so that $h(\xi,\Lambda)=0$ for all $(\xi,\Lambda)$ with $\Psi(\xi,\Lambda) \le c(\Omega)-2\alpha$ and $h(\xi,\Lambda) = 1$ if $\Psi(\xi,\Lambda) \ge c(\Omega)-\alpha$, satisfying $0 \le h \le 1$. Since the choice of $r$ and $R$ and \eqref{reduced_energy}, \eqref{reduced_energy_gradient}, we have $\eta(\xi,\Lambda,t) \in D_{r,R}$ for all $t \ge 0$. Then the number 
\[
  C(\Omega) = \inf_{t \ge 0} \sup_{(\xi,\sigma) \in \M^2 \times I} \J(\eta(\zeta(\xi,\sigma,1),t))
\]
is a critical value for $\J$, and the proof is complete. 
\end{proof}


\subsection{The subcritical case, one-bubble solutions}

In this subsection we prove Theorem~\ref{theorem2}. Let us then suppose that $p=p^*-\ep$ in \eqref{main_equation} and $m=1$, that is, we consider the subcritical case and study the concentration phenomena for just one bubble. In this case the function $\Psi$ in \eqref{psi} takes the form
\[
  \Psi(\xi, \Lambda)= \frac{1}{2} H(\xi,\xi) \Lambda^2 - \log \Lambda, \quad \xi \in \Omega, \Lambda>0.
\]
Thanks to the coercivity of $\Psi$ in $\Lambda$, in order to find a critical point of $\Psi (\xi,\Lambda)$, we have to find one to $R(\xi) = H(\xi,\xi)$, that is, the Robin's function of the domain $\Omega$.

The next result shows that the Robin's function blows up at the boundary, which implies that  its absolute minimums are somehow stable under small variations of it. Before the precise statement of the result, let us review a fractional version of the Kelvin transform (see Appendix~A in \cite{RoSe2013}).

\begin{lemma}[Fractional Kelvin transform]
Let $u$ be a smooth bounded function in $\R^n \setminus \{0\}$. Let $\xi \mapsto \xi/|\xi|^2$ be the inversion with respect to the unit sphere. Define $u^*(\xi)=|\xi|^{2s-n} u(\xi^*)$. Then,
\[
  (-\Delta)^s u^*(\xi) = |\xi|^{-2n-s}(-\Delta)^s u(\xi^*),
\]
for all $\xi \neq 0$.
\end{lemma}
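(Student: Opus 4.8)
The plan is to argue directly from the singular integral representation \eqref{sLapl.Rd.Kernel} together with the conformal geometry of the inversion $\xi\mapsto\xi^*:=\xi/|\xi|^2$ (one may alternatively invoke the computation in \cite[Appendix~A]{RoSe2013}; we note in passing that the exponent in the statement should read $|\xi|^{-(n+2s)}$). Write $\bar\xi=\xi^*$ and recall three elementary properties of the inversion: it is an involution, the Jacobian determinant of $\eta\mapsto\eta^*$ has absolute value $|\eta|^{-2n}$, and it is conformal in the sense that $|a^*-b^*|=|a-b|/(|a|\,|b|)$ for all $a,b\neq 0$.

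First I would start from
\[
(-\Delta)^s u^*(\xi)=c_{n,s}\,\PV\int_{\R^n}\frac{u^*(\xi)-u^*(\eta)}{|\xi-\eta|^{n+2s}}\,\di\eta,
\]
which is well defined because $u$ is bounded and smooth away from the origin, so that $u^*(\eta)=|\eta|^{2s-n}u(\eta^*)$ decays at infinity, is locally integrable near $0$, and is smooth near the nonzero point $\xi$. Then I would perform the change of variables $\eta=\zeta^*$. Using the identities $u^*(\xi)=|\bar\xi|^{n-2s}u(\bar\xi)$, $u^*(\eta)=|\zeta|^{n-2s}u(\zeta)$, the conformal relation in the form $|\xi-\eta|=|\xi|\,|\bar\xi-\zeta|/|\zeta|$, and the Jacobian factor $|\zeta|^{-2n}$, the integral becomes
\[
(-\Delta)^s u^*(\xi)=c_{n,s}\,|\xi|^{-(n+2s)}\,\PV\int_{\R^n}\frac{|\bar\xi|^{n-2s}\,|\zeta|^{2s-n}\,u(\bar\xi)-u(\zeta)}{|\bar\xi-\zeta|^{n+2s}}\,\di\zeta.
\]
Writing $|\bar\xi|^{n-2s}|\zeta|^{2s-n}u(\bar\xi)-u(\zeta)=\bigl(u(\bar\xi)-u(\zeta)\bigr)+|\bar\xi|^{n-2s}u(\bar\xi)\bigl(|\zeta|^{2s-n}-|\bar\xi|^{2s-n}\bigr)$ and splitting the principal value accordingly gives
\[
(-\Delta)^s u^*(\xi)=|\xi|^{-(n+2s)}(-\Delta)^s u(\bar\xi)-|\xi|^{-(n+2s)}|\bar\xi|^{n-2s}u(\bar\xi)\,(-\Delta)^s\bigl(|\cdot|^{2s-n}\bigr)(\bar\xi).
\]

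It then remains only to observe that the correction term vanishes: $|x|^{2s-n}$ is, up to the constant $a_{n,s}$, the fundamental solution $\varGamma(\cdot,0)$ of $(-\Delta)^s$ on $\R^n$, hence $s$-harmonic in $\R^n\setminus\{0\}$, and $\bar\xi=\xi^*\neq 0$ since $\xi\neq 0$; therefore $(-\Delta)^s(|\cdot|^{2s-n})(\bar\xi)=0$ and the claimed identity follows. The one point requiring care is the handling of the principal values: that the integral defining $(-\Delta)^s u^*(\xi)$ converges, that the change of variables is licit across the singularity, and that after splitting the numerator each of the two resulting principal-value integrals converges separately near $\zeta=\bar\xi$. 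All of this follows from the $C^2$ regularity of $u$ and of $|\cdot|^{2s-n}$ near the nonzero points $\bar\xi,\zeta$, from local integrability of $|\zeta|^{2s-n}$ near $0$ (since $2s-n>-n$), and from the $|\zeta|^{-2n}$ decay at infinity, via the standard approximation of $\PV$ by integrals over $\R^n\setminus B_\rho(\bar\xi)$ combined with a symmetry argument on $B_\rho(\bar\xi)$. An alternative, coordinate-free proof uses the Kelvin transform of the Caffarelli--Silvestre extension: inversion about a sphere centred at a boundary point of $\R^{n+1}_+$ preserves $\R^{n+1}_+$ and is conformal for $\divr(y^{1-2s}\nabla\cdot)$, sending the $s$-harmonic extension of $u$ to that of $u^*$ up to the conformal factor, after which \eqref{extension-b} yields the same identity.
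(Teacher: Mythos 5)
Your argument is correct, and it is worth noting that the paper itself does not supply a proof of this lemma: it simply points to Appendix~A of \cite{RoSe2013} (Proposition~A.1 there), where precisely the computation you carry out is given. Your change of variables $\eta=\zeta^*$ in the singular-integral definition, the use of $|a^*-b^*|=|a-b|/(|a||b|)$, the Jacobian factor $|\zeta|^{-2n}$, the splitting of the numerator into $\bigl(u(\bar\xi)-u(\zeta)\bigr)$ plus a multiple of $\bigl(|\zeta|^{2s-n}-|\bar\xi|^{2s-n}\bigr)$, and the final observation that $(-\Delta)^s|\cdot|^{2s-n}$ vanishes away from the origin, together reproduce the standard proof faithfully. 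The points you flag as delicate, namely the convergence of the two principal values separately and the independence of the principal value from the shape of the excised neighbourhood under the inversion, are indeed the only places where care is required, and your justification (local $C^2$ regularity near $\bar\xi$, local integrability of $|\zeta|^{2s-n}$ near $0$ since $2s-n>-n$, decay at infinity) is adequate. You are also right that the exponent as printed in the statement, $|\xi|^{-2n-s}$, is a typographical error for $|\xi|^{-(n+2s)}$; this is what both your computation and the cited reference yield, and it is what is actually used later in the proof of Lemma~6.5. The alternative route you sketch through the Kelvin transform of the Caffarelli--Silvestre extension is also viable and is in some ways cleaner since it sidesteps principal-value manipulations, but the direct argument you give is the one that matches the cited source.
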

Recall also the following identity
\begin{equation}\label{inversions}
|\xi_1^*-\xi_2^*|=\frac{|\xi_1-\xi_2|}{|\xi_1||\xi_2|}.
\end{equation}

\begin{lemma} \label{robin_function_lemma}
Given $\xi \in \Omega$, we define the function $d(\xi) := \dist(\xi, \partial \Omega)$. Then, there exists positive constants $c_1$ and $c_2$ such that,
\begin{equation} \label{blow-up}
  c_1 d(\xi)^{2s-n} \le R(\xi) \le c_2 d(\xi)^{2s-n} \quad \text{for all } \xi \in \Omega.
\end{equation}
\end{lemma}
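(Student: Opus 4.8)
The plan is to prove the two-sided bound by comparing $H(\cdot,\xi)$, the regular part of the Green function, with the explicit regular part of the half-space Green function, and to exploit scaling. First I would establish the upper bound. Fix $\xi\in\Omega$ and let $d=d(\xi)$. By the maximum principle (Lemma~\ref{maximum_principle}) applied to the extension problem \eqref{regular-extension}, the value $H(\xi,\xi)=\tr H(\cdot,\xi)$ at $\xi$ is controlled by $\sup_{z\in\partial_L\C}\varGamma(z-\xi)=\sup_{x\in\partial\Omega} a_{n,s}|x-\xi|^{2s-n}=a_{n,s}d^{2s-n}$, because $t\mapsto t^{2s-n}$ is decreasing and the nearest boundary point is at distance $d$. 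This immediately gives $H(\xi,\xi)\le c_2 d^{2s-n}$ with $c_2=a_{n,s}$. (Here one uses that $H(z,\xi)\le \varGamma(z-\xi)$ in $\C$ by the maximum principle, hence $H(\xi,\xi)=H(\cdot,\xi)|_{y=0}$ is bounded by $\varGamma$ restricted to $\partial_L\C$; strictly speaking one compares $H$ with the $s$-harmonic function having boundary data $\varGamma$, but that function is itself bounded by $a_{n,s}d^{2s-n}$.)

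For the lower bound I would use a rescaling-and-compactness argument, which is the approach the paper itself signals (it later invokes ``arguing as in the proof of Lemma~\ref{robin_function_lemma}''). Rescale so that $d(\xi)=1$: set $\Omega^d=d^{-1}\Omega$ and $\bar\xi=d^{-1}\xi$, so $\dist(\bar\xi,\partial\Omega^d)=1$. The scaling relations $G_d(\bar x,\bar\xi)=d^{n-2s}G(x,\xi)$, $H_d(\bar x,\bar\xi)=d^{n-2s}H(x,\xi)$ hold (the operator $(-\Delta)^s$ scales by $d^{-2s}$ and the Dirac mass by $d^{-n}$, giving $d^{n-2s}$ for the Green function; the regular part inherits the same scaling). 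Thus it suffices to show $H_d(\bar\xi,\bar\xi)\ge c_1>0$ uniformly over all domains $\Omega^d$ with $\dist(\bar\xi,\partial\Omega^d)=1$. Since $\bar\xi$ is at unit distance from $\partial\Omega^d$, the ball $B_1(\bar\xi)\subset\Omega^d$, and one has $G_d(\cdot,\bar\xi)\le\varGamma_{B_1(\bar\xi)}(\cdot,\bar\xi)$ by domain monotonicity of the Green function (smaller domain, smaller Green function), hence $H_d(\bar\xi,\bar\xi)=\varGamma(0)-\lim_{x\to\bar\xi}G_d(x,\bar\xi)\ge H_{B_1}(\bar\xi,\bar\xi)$, and $H_{B_1}(\bar\xi,\bar\xi)$ is a fixed positive constant computable from the explicit Green function of the ball for $\As$. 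This yields $c_1=H_{B_1(0)}(0,0)>0$, independent of $\Omega$.

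The main obstacle is the rigorous justification of the comparison/monotonicity statements for the nonlocal operator: for the spectral fractional Laplacian, domain monotonicity of the Green function is \emph{not} as immediate as for the classical Laplacian, because the heat semigroup $e^{t\Delta_\Omega}$ does enjoy monotonicity in $\Omega$ (by the parabolic maximum principle), and the representation \eqref{sLapl.Omega.Spectral} transfers this to the Green function $G=\int_0^\infty e^{t\Delta_\Omega}\,dt$ composed with the Dirichlet heat kernel, so one gets $G_{\Omega_1}\le G_{\Omega_2}$ for $\Omega_1\subset\Omega_2$; one must spell this out carefully. For the upper bound, the maximum principle route via the extension is clean. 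Alternatively, and perhaps more uniformly across both operators, I would phrase the lower bound purely through the extension problem: $H(\cdot,\xi)$ solves \eqref{regular-extension} with lateral data $\varGamma(z-\xi)\ge a_{n,s}(\diam\Omega)^{2s-n}$, so by the maximum principle $H(\xi,\xi)\ge a_{n,s}(\diam\Omega)^{2s-n}$ — but this constant degenerates as $d(\xi)\to 0$ only in the sense of the exponent, so I would instead keep the rescaling argument to get the sharp power $d^{2s-n}$. I expect the bulk of the write-up to be devoted to making the heat-semigroup monotonicity (spectral case) and the Kelvin-transform/half-space limit (as used later) precise; the algebra with \eqref{inversions} and \eqref{Gamma} is routine once the comparison principles are in hand.
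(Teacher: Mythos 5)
Your upper bound is correct and actually simpler than what the paper sketches: applying Lemma~\ref{maximum_principle} directly to the extension problem \eqref{regular-extension} with lateral Dirichlet data $\varGamma(\cdot-\xi)$, whose supremum over $\partial_L\C$ is $a_{n,s}\,d(\xi)^{2s-n}$, immediately gives $H(\xi,\xi)\le a_{n,s}\,d(\xi)^{2s-n}$. (The paper instead says ``similar argument using an interior ball'', which also works but requires more setup.)

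The lower bound, however, has the domain monotonicity \emph{backwards}, and this is fatal. If $B_1(\bar\xi)\subset\Omega^d$ then the Green function of the smaller domain is the smaller one, i.e.\ $G_{B_1}\le G_{\Omega^d}$, hence $H_{\Omega^d}=\varGamma-G_{\Omega^d}\le\varGamma-G_{B_1}=H_{B_1}$. Your interior-ball comparison therefore yields an \emph{upper} bound $H_d(\bar\xi,\bar\xi)\le H_{B_1}(\bar\xi,\bar\xi)$, not the lower bound you want; the chain as written ($G_d\le G_{B_1}$, then $H_d\ge H_{B_1}$) starts from an inequality that is false. To get a lower bound on $H$ one must instead compare $\Omega$ with a \emph{containing} domain: if $\Omega\subset D$ then $G_\Omega\le G_D$ and $H_\Omega\ge H_D$. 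That is precisely what the paper does — it takes an exterior ball $B\subset\Omega^c$, considers the half-space $S_-$, and builds via the Kelvin transform a comparison function $F$ on $B^c\times(0,\infty)\supset\Omega\times(0,\infty)$ satisfying the extension equation with the Dirac mass at $\xi$ and $F\ge 0$ on $\partial\Omega\times(0,\infty)$, whence $G\le F$ and $H\ge\varGamma-F$, the latter being $a_{n,s}\bigl|\,|\xi|\,|\bar\xi^*|-\bar\xi^*/|\bar\xi^*|\,\bigr|^{2s-n}\sim c_1\,d(\xi)^{2s-n}$. Your interior ball would be the right tool for the \emph{upper} bound (that is the paper's ``similar argument''), not the lower one.

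A secondary remark: your worry that domain monotonicity of the spectral Green function requires arguing through the heat semigroup is unnecessary. If $\Omega_1\subset\Omega_2$ and $\xi\in\Omega_1$, then $V=G_{\Omega_2}-G_{\Omega_1}$ (extensions) solves $\divr(y^{1-2s}\nabla V)=0$ in $\Omega_1\times(0,\infty)$ with zero Neumann data on $\Omega_1$ (the Dirac masses cancel) and $V=G_{\Omega_2}\ge 0$ on $\partial\Omega_1\times(0,\infty)$, so $V\ge 0$ by the maximum principle of Lemma~\ref{maximum_principle}. This keeps the whole argument at the level of the Caffarelli--Silvestre extension, which is the only comparison principle the paper actually invokes.
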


\begin{proof}
Let $\xi_0=(\xi_0^1,\dotsc,\xi_0^n) \in \partial \Omega$, and consider the ball $B := B_{1/2}(1/2,0,\dotsc,0) \subset \R^n$. After a rearrange of variables, we can assume that $ \xi_0 = (1,\dotsc,0) $ and $B \subset \Omega^c$. We shall use the Green function of $S_- = \{ (\xi^1,\dotsc,\xi^n) \in \R^n : \xi^1 < 1 \}$ and the Kelvin transform to bound from above the Green function of $\Omega$, which we denote by $G$.

Notice that the fractional Green function of the half-space $S_-$ (recall \eqref{greens_function}) is given by
\[
G_{S_-} (Z,Y) = \varGamma(Z-Y)-\varGamma(Z-\bar Y), \quad Z, Y \in \bar S_- \times [0,\infty), Z \neq Y,
\]
where $\bar Y$ is the reflection of $Y$ with respect to the half-plane $\partial S_- \times [0,\infty)$. Observe that $\Omega \times (0,\infty) \subset B^c \times (0,\infty) \ \subset S_-^*$. Then, we consider the $(n+1)$-dimensional Kelvin transform of the Green function of $S_-$ and define
\[
  F(Z,\xi) = |\xi|^{2s-n} |Z|^{2s-n}\left[ \varGamma(Z^*-\xi^*)-\varGamma(Z^*-\bar \xi^*)\right], \quad Z\in B^c \times (0,\infty), \xi \in B^c.
\]
It is easy to check that $F(Z,\xi)\ge 0$ on $\partial \Omega \times (0,\infty)$, and after using \eqref{inversions}, $F$ can be written as
\[
  F(Z,\xi) =  \varGamma(Z-\xi) - a_{n,s} \left| Z|\bar \xi^*|-\frac{\bar \xi^*}{|\bar \xi^*|} \right|^{2s-n}. 
\]

$F$ satisfies up to a positive constant
\begin{equation}
  \left\{
    \begin{aligned}
       \divr( y^{1-2s} \nabla  F(\cdot,\xi)) &= 0 && \text{in } \Omega \times (0,\infty), \\
       F(\cdot,\xi) &\ge 0 && \text{on } \partial \Omega \times (0,\infty), \\
       -\lim_{y\to0} y^{1-2s} \partial_y F(\cdot,\xi) &= \delta_\xi(\cdot) && \text{on } \Omega.
    \end{aligned}
  \right.
\end{equation}
Then, by a minor variant of the maximum principle (Lemma~\ref{maximum_principle}), we deduce that 
\[
  G(Z,\xi) \le F(Z,\xi) \quad \text{for all } Z \in \Omega \times (0,\infty),\ \xi \in \Omega.
\]
This implies that
\[
H(Z,\xi) \ge a_{n,s} \left| Z|\bar \xi^*|-\frac{\bar \xi^*}{|\bar \xi^*|} \right|^{2s-n} \quad \text{for all } Z \in \Omega \times (0,\infty),\ \xi \in \Omega.
\]
Thus, there exists a positive constant $c_1>0$ such that for all $\xi \in \Omega$ close to $\xi_0$,
\[
  R(\xi) = H(\xi,\xi) \ge c_1|\xi - \xi_0|^{2s-n}.
\]
Therefore, taking into account that $\xi_0 \in \partial \Omega$ is arbitrary, we conclude that in a neighborhood of $\partial \Omega$ there exist a constant $c_1 > 0$ such that $R(\xi) \ge c_1 d(\xi)^{2s-n}$. The smoothness of $H$ in $ \Omega $ allows us to extend this inequality to the whole domain. 

The other inequality in \eqref{blow-up} can be proven by a similar argument using an interior ball instead. 
\end{proof}

\begin{proof}[Proof of Theorem~\ref{theorem2}: the spectral fractional Laplacian case]
Thanks to the previous lemma, there still exist absolute minimums of small perturbations of $ R(\xi) = H(\xi,\xi)$. Theorem~\ref{theorem2} is then a consequence of this fact together Lemma~\ref{reduction} and \eqref{reduced_energy_gradient}.
\end{proof}


\section{The case of the restricted fractional Laplacian}

This section is devoted to the restricted fractional Laplacian and the necessary changes compared to the case of the spectral fractional Laplacian to handle it. The main changes will be in the stability of the critical points of $\Psi$. Most of the computations are however very similar and we leave some details to the reader.

Let us start recalling the definition of the restricted fractional Laplacian: 
\[
  (-\Delta_{|\Omega})^s u = c_{n,s}\mbox{
  P.V.}\int_{\mathbb{R}^n} \frac{\bar u(x) - \bar u(z)}{|x-z|^{n+2s}}\,\di z,
\]
With this operator, problem \eqref{1.1} reads as follows

\begin{equation} \label{fractional_problem_restricted}
 \left\{
  \begin{aligned}
    (-\Delta_{|\Omega})^s u & = u^{p^* \pm \ep} , \ 
   u>0\ \text{in } \Omega,\\
   & u = 0 \quad \text{in } \R^n \setminus \Omega,
  \end{aligned}
 \right.
\end{equation}
where $p^* = (n+2s)/(n-2s)$.

We recall Section~1 and denote by $G$ the Green function related to the restricted fractional Laplacian, namely the unique solution to
\begin{equation} \label{greenfun_restricted}
  \left\lbrace 
    \begin{aligned}
      (-\Delta_{|\Omega})^s G(\cdot,\xi) &= \delta_\xi (\cdot) && \text{in } \Omega, \\
          G(\cdot,\xi) &= 0                  && \text{in } \R^n \setminus \Omega.
    \end{aligned}
  \right. 
\end{equation}
The regular part of the Green function is defined by
\[
  H(x,\xi) = \varGamma(x,\xi) - G(x,\xi) \quad \text{for } x,\xi \in \Omega,\ x \not= \xi,
\]
where 
\[
  \varGamma(x,\xi) = \frac{a_{n,s}}{|x-\xi|^{n-2s}}
\]
is the Green function in the entire space $ \R^n $. 

As already noticed, the restricted fractional Laplacian is a self-adjoint operator on $L^2(\Omega)$ with discrete spectrum $\lambda_{k,s}$ and eigenfunctions $\phi_{k,s}$. Denote, as before, the Hilbert space 
\[
  H(\Omega)=\{u=\sum_{k=1}^\infty u_k \phi_{s,k} \in L^2(\Omega)\; : \; \| u \|^2_{H} = \sum_{k=1}^\infty \lambda_{s,k}\vert u_k\vert^2 <+\infty\}\subset L^2(\Omega)
\]

As for the spectral fractional Laplacian, a crucial tool is the Caffarelli-Silvestre extension, which in this case is simpler to state since it holds in all of $\R^{n+1}_+$ and not on the cylinder $\mathcal C$. In this case, problem \eqref{fractional_problem_restricted} writes 

\begin{equation*}
  \left\{
    \begin{aligned}
       \divr( y^{1-2s} & \nabla U ) = 0 && \text{in } \R^{n+1}_+ \\
       U &= u && \text{on } \Omega,\\
       U &= 0 && \text{on } \R^n \backslash \Omega \subset \R^{n+1}_+,\\
    \end{aligned}
  \right.
\end{equation*}
for $U\in \mathcal H^s(\R^{n+1}_+)$, the completion of $C^\infty_0(\overline{\R^{n+1}_+})$ with respect to the semi-norm 
$$
\Big (\int_{\R^{n+1}_+}y^{1-2s}|\nabla U|^2\Big )^{1/2},
$$
where $U$ vanishes outside of $\Omega \times (0,\infty)$. Then, up to a multiplicative constant,
\[
  (-\Delta_{| \Omega})^s u  = - \lim_{y \to 0} y^{1-2s} \partial_y U. 
\]

\subsection{Main preliminary results}

Let $\Omega$ be a bounded domain with smooth boundary in $\R^n$. As in Section~3, it is convenient to work with the enlarged domain 
\[
  \Omega_\ep = \ep ^{-\frac{1}{n-2s}}\Omega, \quad \ep > 0 \text{ small},
\]
that, after the change of variables 
\[
  v(x) = \ep ^{\frac{1}{2s \pm \ep (n-2s/2)}}u(\ep ^{\frac{1}{n-2s}}x), \quad x \in \Omega_\ep,
\]
transforms equation \eqref{fractional_problem_restricted} into
\begin{equation} \label{main_equation_restricted}
  \left\{
    \begin{aligned}
      (-\Delta_{| \Omega_\ep})^s v & = v^{p^* \pm \ep}, \  v >  0 \quad \text{in }\Omega_\ep, \\ 
      v & = 0 \quad \text{in } \R^n \setminus \Omega_\ep.
  \end{aligned}
\right.
\end{equation}

We develop again an initial approximation with concentration at certain $m$ points $\xi_1,\dotsc,\xi_m \in \Omega$, uniformly separated and away from the boundary of $\Omega$ as in \eqref{xi_separation}; this construction is based on the functions $w_{\lambda,\xi}$ in \eqref{w_definition}. To this end, we consider the properly scaled points 
\[
  \xi_i' = \ep^{-\frac{1}{n-2s}} \xi_i \in \Omega_\ep,
\]
and, for parameters $\lambda_1,\dotsc,\lambda_m>0$, look for a solution of problem \eqref{main_equation_restricted} of the form 
\[
  v = \bar v + \phi,
\]
where
\[
  \bar v = \sum_{i=1}^m v_i, \quad \text{with } v_i = v_{\lambda_i,\xi_i'}
\]
(the functions $v_{\lambda_i,\xi_i'}$ are the $H(\Omega_\ep)$-projection of $w_{\lambda_i,\xi_i'}$, as in \eqref{v_i}).

Thereby, our problem appears to be a critical point of the energy functional
\[
  J_{\pm \ep}(V) = \frac12 \int_{\R^{n+1}_+ } y^{1-2s}|\nabla V|^2-\frac{1}{p^*+1 \pm \ep} \int_{\Omega_\ep} |V|^{p^*+1 \pm \ep}.
\]
Since $V$ vanishes outside of $\Omega$, the same integration by parts arguments give the same result as in the expansion in Lemma \ref{energy_expansion} that we reproduce here for sake of completeness. 

\begin{lemma}
The following expansion holds:
\begin{equation}
J_{\pm \ep}(\bar V )= m C_{n,s} + [\gamma_{n,s} + \omega_{n,s} \Psi(\xi,\Lambda)]\ep + o(\ep)
\end{equation}
uniformly with respect to $(\xi,\Lambda)$ satisfying \eqref{xi_separation} and \eqref{lambda_separation}. Here
\begin{equation} \label{psi_restricted}
  \Psi(\xi, \Lambda)= \frac{1}{2} \left\{\sum_{i=1}^m H(\xi_i,\xi_i) \Lambda_i^2-2\sum_{i<j} G(\xi_i,\xi_j)\Lambda_i \Lambda_j \right\} \pm \log(\Lambda_1 \dotsm \Lambda_m),
\end{equation}
\[
  \gamma_{n,s} = \left\{ \pm \frac{m}{p^*+1}\omega_{n,s} \pm \frac{m}{2}\omega_{n,s} \log \beta_{n,s} \mp \frac{m}{p^*+1} \int_{\R^n} w^{p^*+1} \log w \right\} 
\]
and
\[
  \omega_{n,s} = \frac{\int_{\R^n} w^{p^*+1}}{p^*+1}.
\]
\end{lemma}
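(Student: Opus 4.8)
The plan is to mirror the proof of Lemma~\ref{energy_expansion}, the only structural change being that the cylinder $\C_\ep$ is replaced by the half-space $\R^{n+1}_+$ and the lateral boundary $\partial_L\C_\ep$ by the exterior slab $(\R^n\setminus\Omega_\ep)\times(0,\infty)$, on which all the extensions involved vanish. The one genuinely new ingredient is the analog of the auxiliary estimates \eqref{3.1}--\eqref{3.2}, which I would establish as follows. Writing $\varphi_{\lambda,\xi'}:=w_{\lambda,\xi'}-v_{\lambda,\xi'}$ (an $n$-dimensional function, no cylinder needed), one has $(-\Delta)^s w_{\lambda,\xi'}=w_{\lambda,\xi'}^{p^*}=(-\Delta_{|\Omega_\ep})^s v_{\lambda,\xi'}$ in $\Omega_\ep$ by the definition of the projection $v_{\lambda,\xi'}$ (cf. \eqref{v_i}), so $\varphi_{\lambda,\xi'}$ is $s$-harmonic in $\Omega_\ep$ and equals $w_{\lambda,\xi'}$ on $\R^n\setminus\Omega_\ep$. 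After the scaling $x\mapsto\ep^{-\frac1{n-2s}}x$ this becomes a function that is $s$-harmonic in $\Omega$ with exterior data $w_{\lambda,\xi'}(\ep^{-\frac1{n-2s}}x)$; using the convolution identity $w_{\lambda,\xi'}=\varGamma*w_{\lambda,\xi'}^{p^*}$ exactly as in the computation leading to \eqref{3.1}, this data equals $\alpha\lambda^{\frac{n-2s}2}\varGamma(x,\xi)\ep+o(\ep)$ uniformly on $\R^n\setminus\Omega$. Since the regular part $H(\cdot,\xi)$ of the restricted Green function \eqref{greenfun_restricted} is $s$-harmonic in $\Omega$ and coincides with $\varGamma(\cdot,\xi)$ on $\R^n\setminus\Omega$, the function $\alpha\lambda^{\frac{n-2s}2}H(x,\xi)\ep$ has the same exterior data up to $o(\ep)$; the exterior-data version of the maximum principle (Lemma~\ref{maximum_principle}, applied on $\R^{n+1}_+$ with zero Neumann datum on $\Omega$, as in the proof of Lemma~\ref{robin_function_lemma}) then gives $\varphi_{\lambda,\xi'}(\ep^{-\frac1{n-2s}}x)=\alpha\lambda^{\frac{n-2s}2}H(x,\xi)\ep+o(\ep)$ uniformly in $x\in\Omega$, and hence $v_{\lambda,\xi'}(\ep^{-\frac1{n-2s}}x)=\alpha\lambda^{\frac{n-2s}2}G(x,\xi)\ep+o(\ep)$ away from $x=\xi$.

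With these estimates the rest proceeds exactly as in Lemma~\ref{energy_expansion}. Decomposing $J_0(\bar V)$ as in \eqref{3.3} and using that $V_i$ is $s$-harmonic in $\R^{n+1}_+$ with $-\lim_{y\to0}y^{1-2s}\partial_y V_i=w_i^{p^*}$ on $\Omega_\ep$ while $v_i=0$ on $\R^n\setminus\Omega_\ep$, integration by parts gives $\int_{\R^{n+1}_+}y^{1-2s}|\nabla V_i|^2=\int_{\Omega_\ep}w_i^{p^*}v_i=\int_{\Omega_\ep}w_i^{p^*+1}-w_i^{p^*}\varphi_i$ (the only contribution on $\{y=0\}$ coming from $\Omega_\ep$). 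Inserting the expansion of $\varphi_i$ and rescaling, as in \eqref{3.4}--\eqref{3.7}, produces the terms $-\beta_{n,s}(\int_{\R^n}w^{p^*})^2H(\xi_i,\xi_i)\Lambda_i^2\ep$ and $\beta_{n,s}(\int_{\R^n}w^{p^*})^2G(\xi_i,\xi_j)\Lambda_i\Lambda_j\ep$, while the purely nonlinear cross term is handled as in \eqref{3.7} using only $0\le v_i\le w_i$; this yields $J_0(\bar V)=mC_{n,s}+\frac{\omega_{n,s}}2\{\sum_i H(\xi_i,\xi_i)\Lambda_i^2-2\sum_{i<j}G(\xi_i,\xi_j)\Lambda_i\Lambda_j\}\ep+o(\ep)$. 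Adding $J_{\pm\ep}(\bar V)-J_0(\bar V)=\pm\frac{\ep}{(p^*+1)^2}\int_{\Omega_\ep}\bar V^{p^*+1}\mp\frac{\ep}{p^*+1}\int_{\Omega_\ep}\bar V^{p^*+1}\log\bar V+o(\ep)$ and computing its right-hand side as in \cite[Lemma~2.1]{dPFeMu2002} and \cite{dPFeMu2003} contributes the constant $\gamma_{n,s}$ together with the term $\pm\log(\Lambda_1\dotsm\Lambda_m)$ coming from the factor $\beta_{n,s}\Lambda_i$ in \eqref{capital lambda}. Summing the two contributions gives the asserted expansion with $\Psi$ as in \eqref{psi_restricted}, and the $C^1$-uniformity in $(\xi,\Lambda)$ on \eqref{xi_separation}--\eqref{lambda_separation} follows by differentiating the same identities.

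The step I expect to be the main obstacle is the first one. In the spectral case the lateral boundary of $\C_\ep$ is bounded in the $x$-variable, so Lemma~\ref{maximum_principle} applies directly; here the region $\R^n\setminus\Omega_\ep$ is unbounded, so one must check that the $o(\ep)$ discrepancy between the exterior data of $\varphi_{\lambda,\xi'}(\ep^{-\frac1{n-2s}}\cdot)$ and of $\alpha\lambda^{\frac{n-2s}2}H(\cdot,\xi)\ep$ is uniform up to infinity (this is where the decay $\varGamma(x,\xi)=O(|x|^{2s-n})$ and the matching decay of $w_{\lambda,\xi'}$ enter), and one needs the maximum principle for the extension on all of $\R^{n+1}_+$ with zero Neumann datum on $\Omega$ and prescribed values outside. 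Both are routine given the tools of Section~2 and the argument in Lemma~\ref{robin_function_lemma}, but this is the place where the proof must be adapted rather than copied.
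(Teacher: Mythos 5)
Your sketch is correct and follows essentially the same route as the paper, which in Section~7 simply observes that since $V$ vanishes outside $\Omega\times(0,\infty)$, the integration-by-parts computation of Lemma~\ref{energy_expansion} carries over verbatim. You have filled in the one step the paper leaves implicit — the restricted-case analogue of \eqref{3.1}--\eqref{3.2} — by noting that $\varphi_{\lambda,\xi'}$ is $s$-harmonic in $\Omega_\ep$, equals $w_{\lambda,\xi'}$ on $\R^n\setminus\Omega_\ep$, and matches $\alpha\lambda^{\frac{n-2s}{2}}\varGamma(\cdot,\xi)\ep$ there up to $o(\ep)$ uniformly up to infinity (the leading $|x-\xi|^{2s-n}$ behaviours cancel exactly since $b_{n,s}=a_{n,s}\alpha$, and the correction is $O(\ep^{1+2/(n-2s)})$); the exterior-data maximum principle then transfers the estimate to $\Omega$. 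Your concern about uniformity of the $o(\ep)$ on the unbounded exterior region is the right thing to flag, and your resolution is sound.
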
 

The finite-dimensional reduction is completely similar to Section~4 and 5: for $\ep > 0$ small one can solve \eqref{main_equation_restricted} in suitable weighted spaces and find a result analogous to Proposition~\ref{proposition_nonlinear_eq}. In this way, we define the reduced function
\[
  \J (\xi,\Lambda) = J_{\pm \ep}(V),
\] 
where $V$ is the $s$-harmonic extension of the $m$-bubble solution of \eqref{main_equation_restricted} just found. The next results show that this reduced functional is the connection between the existence of solutions of \eqref{main_equation_restricted} and the existence of stable critical points of $\Psi$.
\begin{lemma}\label{reduction_restricted}
$v = \bar v + \phi$ is a solution of problem \eqref{main_equation_restricted} if and only if $(\xi,\Lambda)$ is a critical point of $\J$. 
\end{lemma}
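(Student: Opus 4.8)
The plan is to run, in the restricted setting, exactly the Lyapunov--Schmidt argument that proved Lemma~\ref{reduction} in the spectral case, so I will only need to check that each ingredient survives the passage from the cylinder $\C_\ep$ to the half-space $\R^{n+1}_+$. First I would invoke the restricted analog of Proposition~\ref{proposition_nonlinear_eq}: for $\ep$ small and $(\xi',\Lambda)$ satisfying the constraints, there is a unique $\phi=\phi(\xi',\Lambda)$, of class $C^1$ in $(\xi',\Lambda)$ for the $\|\cdot\|_{\alpha-2s}$-norm, solving the projected problem, i.e. the restricted version of \eqref{4.1}; by construction $v=\bar v+\phi$ satisfies $(-\Delta_{|\Omega_\ep})^s v=v_+^{p^*\pm\ep}+\sum_{i,j}c_{ij}w^{p^*-1}z_{ij}$ in $\Omega_\ep$ with $v=0$ in $\R^n\setminus\Omega_\ep$, for Lagrange multipliers $c_{ij}=c_{ij}(\xi',\Lambda)$ obeying $|c_{ij}|\le C\|R_\ep\|_\alpha$. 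Consequently $v=\bar v+\phi$ solves \eqref{main_equation_restricted} if and only if $c_{ij}(\xi',\Lambda)=0$ for all $i,j$.

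Second, I would identify the vanishing of the $c_{ij}$'s with the criticality of $\J$. Let $\Phi$ be the $s$-harmonic extension of $\phi$ to $\R^{n+1}_+$; by the variational characterization of $\phi$, the derivative $J_{\pm\ep}'(\bar V+\Phi)[\Theta]$ vanishes for every admissible variation $\Theta$, i.e. those whose trace $\theta$ satisfies $\int_{\Omega_\ep}\theta\,w^{p^*-1}z_{ij}=0$ for all $i,j$. Differentiating $\J(\xi,\Lambda)=J_{\pm\ep}(\bar V+\Phi)$ in $\xi_{ij}'$ and $\Lambda_i$ and using the chain rule together with these orthogonality relations, the partial derivatives of $\J$ reduce, up to controlled errors, to the pairings of $\sum_{l,k}c_{lk}w^{p^*-1}z_{lk}$ against $\partial_{\xi_{ij}'}\bar v$ and $\partial_{\Lambda_i}\bar v$. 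Since $\partial_{\xi_{ij}'}v_i=z_{ij}+o(1)$ and $\partial_{\Lambda_i}v_i=z_{i0}+o(1)$ as $\ep\to0$ (these elementary expansions are unchanged, as the bubbles $w_{\lambda,\xi'}$ and their projections are the same objects here), and since the matrix with entries $\int_{\Omega_\ep}w^{p^*-1}z_{ij}z_{lk}$ is diagonally dominant and invertible in the limit --- exactly as computed inside the proof of Lemma~\ref{a_priori_bound} --- one obtains a relation $\nabla\J(\xi,\Lambda)=A_\ep(\xi',\Lambda)\,c(\xi',\Lambda)+o(\|c\|)$ with $A_\ep$ invertible uniformly for small $\ep$. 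Hence $\nabla\J(\xi,\Lambda)=0$ if and only if all $c_{ij}=0$, which by the first step is equivalent to $v=\bar v+\phi$ being a solution of \eqref{main_equation_restricted}.

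The only genuinely new content is packed into the restricted analog of Proposition~\ref{proposition_nonlinear_eq}, so I expect the main obstacle to be verifying that the linear theory carries over: that Lemma~\ref{a_priori_bound} and Proposition~\ref{proposition_linear_eq} hold for $(-\Delta_{|\Omega})^s$, using that its Caffarelli--Silvestre extension lives on $\R^{n+1}_+$ with $U=0$ on $(\R^n\setminus\Omega)\times(0,\infty)$, so the comparison $|\phi_\ep|\le\psi_\ep$ still follows from the maximum principle together with the same convolution estimate (Lemma~\ref{lconv}), while the nondegeneracy Proposition~\ref{non-degeneracy}, being a statement about the entire equation $(-\Delta)^s\phi=p^*w_{\lambda,\xi'}^{p^*-1}\phi$ in $\R^n$, is literally the same. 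The remaining work is bookkeeping: checking that every $o(1)$ and $o(\ep)$ estimate of Sections~4--5 is uniform in the present setting and that the exterior vanishing constraint does not spoil the regularity needed to differentiate $\J$ in $(\xi,\Lambda)$. Once this is in place the equivalence is purely formal, and the proof concludes exactly as for Lemma~\ref{reduction}.
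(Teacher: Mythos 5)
Your proposal reproduces the standard Lyapunov–Schmidt reduction argument that the paper sketches for Lemma~\ref{reduction} (and whose details it delegates to \cite{dPFeMu2002,dPFeMu2003}): show that $\nabla\J$ equals an invertible matrix applied to the vector of Lagrange multipliers $c_{ij}$, up to small errors, using the orthogonality relations, the $C^1$-dependence of $\phi$ from the restricted analog of Proposition~\ref{proposition_nonlinear_eq}, and the expansions $\partial_{\xi'_{ij}}v_i=z_{ij}+o(1)$, $\partial_{\Lambda_i}v_i=z_{i0}+o(1)$. This is exactly the paper's route, and the adaptations you list (maximum principle on $\R^{n+1}_+$ with exterior vanishing, the entire-space nondegeneracy being unchanged, the convolution estimate) are the correct ones; only a minor caveat is that the projections $v_i$ and $z_{ij}$ are \emph{not} literally the same objects in the restricted setting (the projection operator changed), but their $o(1)$-expansions in terms of the common entire-space objects $w_{\lambda,\xi'}$ and $\bar z_{ij}$ are, which is what the argument actually uses.
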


\begin{proposition}
We have the expansion
\[
  \J(\xi,\Lambda) = m C_{n,s} + [\gamma_{n,s} + \omega_{n,s} \Psi(\xi,\Lambda)] \ep + o(\ep),
\]
where $o(\ep) \to 0$ as $\ep \to 0$ in the uniform $C^1$-sense with respect to $(\xi,\Lambda)$ satisfying \eqref{xi_separation} and  \eqref{lambda_separation}. Moreover
\begin{equation} \label{reduced_energy_gradient_restricted}
  \nabla \J(\xi,\Lambda) = \omega_{n,s} \nabla \Psi(\xi,\Lambda) \ep + o(\ep). 
\end{equation} 
\end{proposition}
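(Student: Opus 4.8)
The plan is to reproduce, in the restricted setting, the argument behind Proposition~\ref{reduced_energy_proposition}: one shows that substituting the genuine solution $\bar V+\Phi$ of the projected problem for the approximation $\bar V$ changes the energy only by $o(\ep)$, uniformly in $C^1$ over the set \eqref{xi_separation}--\eqref{lambda_separation}, and then reads off the conclusion from the restricted version of Lemma~\ref{energy_expansion}. What makes this mechanical is that here the $s$-harmonic extensions live on all of $\R^{n+1}_+$ but vanish identically outside $\Omega_\ep\times(0,\infty)$; consequently every integration by parts and every weighted estimate used for the spectral operator applies word for word, so no new identity is required. (The genuine difference between the two operators surfaces only later, in the stability analysis of the critical points of $\Psi$.)

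First I would collect the output of the reduction. By the analogue of Proposition~\ref{proposition_nonlinear_eq} for the restricted fractional Laplacian, for $(\xi,\Lambda)$ as in \eqref{xi_separation}--\eqref{lambda_separation} there is a unique $\phi=\phi(\xi',\Lambda)=\tilde\phi+\psi$ solving \eqref{main_equation_restricted} in the projected sense, with $\psi=L_\ep(R_\ep)$ and
\[
  \|\tilde\phi\|_{\alpha-2s}+\|\nabla_{\xi',\Lambda}\tilde\phi\|_{\alpha-2s}\le C\ep^{\min\{p^*,2\}},\qquad \|R_\ep\|_\alpha+\|\nabla_{\xi',\Lambda}R_\ep\|_\alpha=o(\ep^{1/2});
\]
in particular $\phi$ and its derivatives in $(\xi',\Lambda)$ are $o(\ep^{1/2})$ in $\|\cdot\|_{\alpha-2s}$. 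I would then argue exactly as in \cite[Sec.~4]{dPFeMu2002} and \cite[Sec.~6]{dPFeMu2003}: expanding the energy functional around $\bar V$, the first order term in $\Phi$ combines with the equation satisfied by $\bar V$ into a pairing of $\phi$ with $R_\ep+N_\ep(\phi)$ and with the Lagrange multiplier terms; the latter vanish because $\Phi$ obeys the constraints $\int_{\Omega_\ep}\phi\,w^{p^*-1}z_{ij}=0$, while the former is of size $o(\ep)$ by the bounds above. Hence $\J(\xi,\Lambda)=J_{\pm\ep}(\bar V)+o(\ep)$, and inserting the restricted version of \eqref{energy-expansion} yields the stated expansion.

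For the gradient identity I would differentiate the relation $\J(\xi,\Lambda)=J_{\pm\ep}(\bar V+\Phi)$ in $(\xi,\Lambda)$, passing through $\xi'=\ep^{-\frac{1}{n-2s}}\xi$. The variation coming from $\Phi$ is governed by $\|\Phi\|_{\alpha-2s}$ and $\|\nabla_{\xi',\Lambda}\Phi\|_{\alpha-2s}$, hence is $o(\ep)$; the potentially dangerous contribution, namely the differentiation of the orthogonality conditions, is absorbed using that $\partial_{\xi_{ij}'}\bar v=z_{ij}+o(1)$ and $\partial_{\Lambda_i}\bar v=z_{i0}+o(1)$ as $\ep\to0$, exactly as in the quoted references. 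Together with the $C^1$ version of the remainder estimate in the restricted analogue of the remark following Lemma~\ref{energy_expansion}, this gives $\nabla\J(\xi,\Lambda)=\nabla J_{\pm\ep}(\bar V)+o(\ep)=\omega_{n,s}\nabla\Psi(\xi,\Lambda)\ep+o(\ep)$, which is \eqref{reduced_energy_gradient_restricted}.

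The step I expect to be the only delicate one is the uniform $C^1$-control of the remainder over \eqref{xi_separation}--\eqref{lambda_separation}: it requires the $C^1$-dependence of the map $(\xi',\Lambda)\mapsto\phi(\xi',\Lambda)$ on the parameters and a careful bookkeeping of the terms produced by differentiating the projection onto the span of the $z_{ij}$'s. All of this is strictly parallel to the spectral case treated in Section~5 and to \cite{dPFeMu2002,dPFeMu2003}, so I would simply transcribe it.
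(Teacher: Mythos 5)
Your proposal is correct and follows the same route the paper itself indicates: the paper states this proposition without a separate argument, pointing instead to the spectral-case Proposition~\ref{reduced_energy_proposition} and to \cite[Sec.~4]{dPFeMu2002}, \cite[Sec.~6]{dPFeMu2003}, precisely the sources you reproduce (Taylor-expand $J_{\pm\ep}$ around $\bar V$, kill the Lagrange-multiplier terms via the orthogonality constraints, bound the remainder by the $\|\cdot\|_{\alpha-2s}$-estimates on $\phi$ and $\nabla_{\xi',\Lambda}\phi$ from the restricted analogue of Proposition~\ref{proposition_nonlinear_eq}, and plug in the restricted Lemma~\ref{energy_expansion}). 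The one cosmetic caveat: the bound $\|R_\ep\|_\alpha+\|\nabla_{\xi',\Lambda}R_\ep\|_\alpha=o(\ep^{1/2})$ you quote is not stated in the paper and is weaker than what actually holds ($O(\ep)$ is the standard estimate, and is what makes the cross term $\int R_\ep\,\phi=o(\ep)$ comfortably); since your weaker bound still closes the $o(\ep)$ estimate, nothing breaks, but it would be cleaner to cite the sharper rate or simply refer to the quoted sources for it.
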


This reduction scheme used to study the concentration phenomenon of solutions to \eqref{fractional_problem_restricted} makes clear the importance of finding stable critical sets (see Definition~\ref{stable_critical_set}) of $\Psi(\xi,\Lambda)$, where $\xi = (\xi_1,\dotsc,\xi_m) \in \Omega^m$ and $\Lambda = (\Lambda_1,\dotsc,\Lambda_m) \in (0,\infty)^m$. In fact, observe that Theorem~\ref{bubbling_solutions1} is a direct consequence of Lemma~\ref{reduction_restricted} together \eqref{reduced_energy_gradient_restricted}  and the stability of the set $\A$ of critical points of $\Psi$.


\subsection{The supercritical case, two-bubble solutions} 

In this subsection we prove Theorem~\ref{two-bubble_solutions} using a min-max argument, as in Section~6. So we consider the supercritical case $p^* + \ep$ in \eqref{fractional_problem_restricted} and look for a two-bubble solution.

In this setting the function $\Psi$ in \eqref{psi_restricted} takes the form 
\begin{equation} \label{psi_two-bubble_restricted}
\Psi(\xi, \Lambda)= \frac{1}{2} \left\{H(\xi_1,\xi_1) \Lambda_1^2 + H(\xi_2,\xi_2) \Lambda_2^2 - 2 G(\xi_1,\xi_2)\Lambda_1 \Lambda_2 \right\} + \log(\Lambda_1 \Lambda_2),
\end{equation}
where $\xi = (\xi_1,\xi_2) \in \Omega^2$ and $\Lambda = (\Lambda_1,\Lambda_2) \in (0,\infty)^2$ satisfy \eqref{xi_separation} and \eqref{lambda_separation}, respectively.
We develop a min-max scheme analog to that in the previous section, and define for $ \rho > 0 $ the set $ \Omega_\rho = \{ \xi \in \Omega : \dist (\xi,\partial\Omega) > \rho \} $. $\Psi$ should be properly modified, as in \eqref{Psi_modified}, to avoid its singularities. 

Recall the function defined in \eqref{varphi_function},
\[
  \varphi(\xi_1,\xi_2) = H^{1/2}(\xi_1,\xi_1)H^{1/2}(\xi_2,\xi_2)-G(\xi_1,\xi_2),
\]
and with it construct a min-max class of functions $\Z$ as in Subsection~6.1. Then define the value
\begin{equation}\label{critical_value_restricted}
  c(\Omega) = \inf_{\zeta \in \mathcal{Z}} \sup_{(\xi,\sigma) \in \M^2 \times I} \Psi(\zeta(\xi,\sigma,1)).
\end{equation}
This quantity turns out to be a critical value of $\Psi$.

The following lemma proves that $ c(\Omega) $ is well-defined. Its proof is similar to the one of Corollary~7.1 in \cite{dPFeMu2003}, we omit the details.

\begin{lemma}
There is a positive constant $K$, independent of $\sigma_0$, such that
\[
  \sup_{(\xi,\sigma) \in \M^2 \times I} \Psi(\zeta(\xi,\sigma,1)) \ge -K \quad \text{for all } \zeta \in \Z.
\]
\end{lemma}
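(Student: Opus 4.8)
The plan is to follow the topological scheme used for Corollary~7.1 in \cite{dPFeMu2003}, since $\Psi$ in \eqref{psi_two-bubble_restricted} has exactly the same structure as in the spectral case, only with $G$ and $H$ now the Green function and its regular part for $(-\Delta_{|\Omega})^s$. Fix $\zeta \in \Z$ and write $\zeta(\cdot,\cdot,1) = (\Theta',\Theta'')$ with $\Theta'$ valued in $\overline{\Omega_\rho}\times\overline{\Omega_\rho}$ and $\Theta''$ valued in $(0,\infty)^2$. Recall that, for $\xi \in \Omega^2$ with $\varphi(\xi) < 0$, the function $\Lambda \mapsto \Psi(\xi,\Lambda)$ has the unique critical point $\Lambda(\xi)$ of \eqref{lambda(xi)}, with $\Psi(\xi,\Lambda(\xi)) = -\tfrac12 + \log(1/|\varphi(\xi)|)$, and that along the ray $\sigma \mapsto \sigma\Lambda(\xi)$ one has $\nabla_\Lambda \Psi(\xi,\sigma\Lambda(\xi)) = (\sigma^{-1}-\sigma)(\Lambda_1(\xi)^{-1},\Lambda_2(\xi)^{-1})$, which vanishes only at $\sigma=1$, points into the open positive quadrant of $\R^2$ for $\sigma<1$ and into its opposite for $\sigma>1$.

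First I would set $g_t := \nabla_\Lambda\Psi \circ \zeta(\cdot,\cdot,t) : \M^2 \times I \to \R^2$. The defining property $\zeta(\xi,\sigma,0)=(\xi,\sigma\Lambda(\xi))$ gives that $g_0$ vanishes precisely on $\M^2 \times \{1\}$, while the boundary constraints $\zeta(\xi,\sigma_0,t)=(\xi,\sigma_0\Lambda(\xi))$ and $\zeta(\xi,\sigma_0^{-1},t)=(\xi,\sigma_0^{-1}\Lambda(\xi))$ together with the formula above show that the restriction of $g_t$ to the two ends $\sigma=\sigma_0$ and $\sigma=\sigma_0^{-1}$ of $\M^2\times I$ is independent of $t$, non-vanishing, and points into opposite open quadrants of $\R^2$ once $\sigma_0$ is small. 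I would then apply the topological continuation theorem of Fitzpatrick, Massab\`o and Pejsachowicz \cite{FiMaPe1983} to the homotopy $(g_t)$: the set $\{(\xi,\sigma,t): g_t(\xi,\sigma)=0\}$ has a connected piece joining $t=0$ to $t=1$, so there is $(\xi_*,\sigma_*)$ with $\Theta''(\xi_*,\sigma_*)=\Lambda(\Theta'(\xi_*,\sigma_*))$. This is the point at which the hypotheses on $\M$ are used: choosing $\alpha \in H^d(\Omega)$ with $\iota^*\alpha \neq 0$, the cohomological ``size'' of $g_0^{-1}(0)=\M^2\times\{1\}$ is carried by the nonzero class $\iota^*(\alpha\times\alpha) \in H^{2d}(\M^2)$ (nonzero by the K\"unneth formula), and the assumption that $d$ is odd or $H^{2d}(\Omega)=0$ forces the pull-back of $\alpha\times\alpha$ to the diagonal $\Delta$ of $\Omega^2$ to vanish; propagating this class through the continuation then keeps the relevant component of the zero set away from a neighbourhood of $\Delta$, so that $|\varphi(\Theta'(\xi_*,\sigma_*))| \le C_0$ for some $C_0 = C_0(\Omega,\M,\rho)$ independent of $\sigma_0$. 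Hence
\[
  \sup_{(\xi,\sigma)\in\M^2\times I}\Psi(\zeta(\xi,\sigma,1)) \ \ge\ \Psi(\zeta(\xi_*,\sigma_*,1)) \ =\ -\tfrac12+\log\frac{1}{|\varphi(\Theta'(\xi_*,\sigma_*))|} \ \ge\ -\tfrac12-\log C_0 \ =:\ -K,
\]
with $K$ depending only on $\Omega$, $\M$ and the fixed $\rho$, $M$, which is the assertion.

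The step I expect to be the main obstacle is precisely this last one: coupling the Fitzpatrick--Massab\`o--Pejsachowicz continuation with the cohomological hypotheses on $\M$ so as to show not merely that the constraint $\Lambda=\Lambda(\xi)$ is realised somewhere on $\zeta(\cdot,\cdot,1)$, but that it is realised at a configuration of the two concentration points staying a definite distance from the diagonal of $\Omega^2$ — this is what makes $|\varphi|$, and therefore the lower bound for $\Psi$, controllable uniformly in $\sigma_0$. The remaining ingredients (the transversality of $g_0$ along $\M^2\times\{1\}$, the sign computation on the two ends of $\M^2\times I$, and the final estimate) are routine, and since $\Psi$ is formally identical to the one in Subsection~6.1 the argument then runs verbatim as there.
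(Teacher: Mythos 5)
Your proposal is precisely the argument the paper points to --- Corollary~7.1 of del Pino--Felmer--Musso, built on the Fitzpatrick--Massab\`o--Pejsachowicz continuation theorem --- and the paper itself supplies no more than ``Its proof is similar to the one of Corollary~7.1 in \cite{dPFeMu2003}, we omit the details.'' Your auxiliary computations are correct: $\Lambda(\xi)$ is the unique critical point of $\Lambda\mapsto\Psi(\xi,\Lambda)$, $\nabla_\Lambda\Psi(\xi,\sigma\Lambda(\xi)) = (\sigma^{-1}-\sigma)\bigl(\Lambda_1(\xi)^{-1},\Lambda_2(\xi)^{-1}\bigr)$ vanishes only at $\sigma=1$ and points into opposite open quadrants at the two ends, and the resulting sign structure of the homotopy $g_t$ is right. (A side remark: at $\Lambda(\xi)$ one finds $H_1\Lambda_1^2 + H_2\Lambda_2^2 - 2G\Lambda_1\Lambda_2 = -2$, hence $\Psi(\xi,\Lambda(\xi)) = -1 + \log\frac{1}{|\varphi(\xi)|}$ rather than $-\tfrac12 + \log\frac{1}{|\varphi(\xi)|}$ as printed in the paper; this is a typo there and does not affect the statement.)

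That said, the step you yourself flag as the main obstacle is indeed a gap, not merely a difficulty. You assert that the continuation theorem yields a component of $\{g_t=0\}$ joining $t=0$ to $t=1$, and that ``propagating the [K\"unneth] class through the continuation keeps the relevant component of the zero set away from a neighbourhood of $\Delta$,'' but you supply no mechanism. A complete argument must (i) verify the hypotheses of the Fitzpatrick--Massab\`o--Pejsachowicz complementing-map theorem for the homotopy $g_t$ on $\M^2\times I$, in particular how the nontrivial class $\iota^*\alpha\times\iota^*\alpha$ and the condition that $d$ is odd or $H^{2d}(\Omega)=0$ interact with the diagonal constraint, and (ii) show that the resulting branch remains inside a compact subset of $D$ determined only by $\Omega$, $\M$, $\rho$, $M$, since that compactness is exactly what makes the bound $|\varphi(\Theta'(\xi_*,\sigma_*))|\le C_0$ independent of $\sigma_0$. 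As written your proof reduces to the cited external result --- which is also what the paper does --- but as a self-contained argument it is incomplete precisely where it would have to go beyond a citation.
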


On the other hand, the domain in which $ \Psi $ is defined is closed for the gradient flow of this function. This is a consequence of the following lemma, which is similar to Lemma~\ref{flow_closed}.

\begin{lemma}
Given $c<0$, there exists a sufficiently small number $\rho>0$ satisfying the following: If $(\xi_1, \xi_2) \in \partial(\Omega_\rho \times \Omega_\rho)$ is such that $\varphi(\xi_1, \xi_2) = c$, then there is a vector $\tau$, tangent to $\partial(\Omega_\rho \times \Omega_\rho)$ at the point $(\xi_1, \xi_2)$, so that
\[
  \nabla \varphi (\xi_1, \xi_2) \cdot \tau \neq 0.
\]
The  number $\rho$ does not depend on $c$.
\end{lemma}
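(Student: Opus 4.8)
The plan is to follow the proof of Lemma~\ref{flow_closed} step by step, the only new ingredient being the explicit Green function of the restricted fractional Laplacian on a half-space. Fix $c<0$; for $\rho>0$ small let $(\xi_{1\rho},\xi_{2\rho})\in\partial(\Omega_\rho\times\Omega_\rho)$ satisfy $\varphi(\xi_{1\rho},\xi_{2\rho})=c$. By symmetry of $\varphi$ we may assume $\xi_{1\rho}\in\partial\Omega_\rho$ and $\xi_{2\rho}\in\Omega_\rho$ (the corner case with both points on $\partial\Omega_\rho$ is handled identically by applying the construction to one coordinate). After a rotation and translation, $\xi_{1\rho}=(0_{\R^{n-1}},\rho)$ with $0$ the closest point of $\partial\Omega$. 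Rescaling by $\rho^{-1}$ (set $\Omega^\rho=\rho^{-1}\Omega$, $\bar\xi=\rho^{-1}\xi$), the Green function of the restricted operator and its regular part scale by the natural power, $G_\rho(\bar\xi_1,\bar\xi_2)=\rho^{\,n-2s}G(\xi_1,\xi_2)$ and $H_\rho=\rho^{\,n-2s}H$, hence $\varphi_\rho=\rho^{\,n-2s}\varphi$, while $\bar\xi_{1\rho}=(0_{\R^{n-1}},1)=:\bar\xi_1$ is now fixed.

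First I would show that $\limsup_{\rho\to0}|\bar\xi_1-\bar\xi_{2\rho}|<\infty$. This uses the two-sided bound $c_1 d(\xi)^{2s-n}\le H(\xi,\xi)\le c_2 d(\xi)^{2s-n}$ for the Robin function of the restricted operator — the analogue of Lemma~\ref{robin_function_lemma}, obtained by comparing $G$ from above and below with the restricted-fractional-Laplacian Green functions of an exterior and an interior ball (or half-space), for which explicit formulas are available — together with the bound $G\le\varGamma$. Indeed, if $R:=|\bar\xi_1-\bar\xi_{2\rho}|\to\infty$ along a subsequence, then $H_\rho^{1/2}(\bar\xi_1,\bar\xi_1)\,H_\rho^{1/2}(\bar\xi_{2\rho},\bar\xi_{2\rho})\ge C R^{-(n-2s)/2}$ while $G_\rho(\bar\xi_1,\bar\xi_{2\rho})\le a_{n,s}R^{-(n-2s)}$, so $\varphi_\rho(\bar\xi_1,\bar\xi_{2\rho})>0$ once $R$ is large — contradicting $\varphi_\rho(\bar\xi_1,\bar\xi_{2\rho})=c\rho^{\,n-2s}<0$; the threshold on $R$ depends only on $n$ and $s$, which is why $\rho$ can be chosen independently of $c$. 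Passing to a subsequence, $\bar\xi_{2\rho}\to\bar\xi_2$ with $\bar\xi_2^{\,n}\ge1$ (since $\xi_{2\rho}\in\Omega_\rho$). As $\rho\to0$ the domains $\Omega^\rho$ exhaust $\R^n_+$; by domain continuity of the Green function of the restricted fractional Laplacian together with interior elliptic estimates, $G_\rho\to G_+$, $H_\rho\to H_+=\varGamma-G_+$ and $\varphi_\rho\to\varphi_+$ in $C^1_{\mathrm{loc}}$ away from the diagonal and the boundary, where
\[
  G_+(x,y)=\kappa_{n,s}\,|x-y|^{2s-n}\!\int_0^{\,4x_n y_n/|x-y|^2}\!\frac{t^{\,s-1}}{(1+t)^{n/2}}\,\di t
\]
is the classical half-space Green function of the restricted fractional Laplacian, normalized so that $H_+=\varGamma-G_+$ extends continuously to the diagonal, where $H_+(x,x)=c_{n,s}\,x_n^{2s-n}$ with $c_{n,s}>0$. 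Since $\varphi_\rho(\bar\xi_1,\bar\xi_{2\rho})=c\rho^{\,n-2s}\to0$, we obtain $\varphi_+(\bar\xi_1,\bar\xi_2)=0$, the analogue of \eqref{varphi_xibar}.

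Then I would split on $\bar\xi_2'$. If $\bar\xi_2'\ne0$, take $\tau=(0_{\R^n},(\bar\xi_2',0))$: since $H_+(y,y)$ depends on $y$ only through $y_n$, the quadratic part contributes nothing, while the formula above shows that $G_+(\bar\xi_1,\cdot)$ is strictly decreasing in $|y'|$, whence $\nabla\varphi_+(\bar\xi_1,\bar\xi_2)\cdot\tau=-\partial_{|y'|}G_+(\bar\xi_1,\bar\xi_2)\,|\bar\xi_2'|>0$. If $\bar\xi_2'=0$, then $\bar\xi_2=(0_{\R^{n-1}},\theta_0)$ with $\theta_0>1$ (the value $1$ being excluded because $\varphi_+\equiv-\infty$ on the diagonal), and I take $\tau=(0_{\R^n},(0_{\R^{n-1}},1))$, so that $\nabla\varphi_+(\bar\xi_1,\bar\xi_2)\cdot\tau=g'(\theta_0)$ for
\[
  g(\theta)=\varphi_+\big((0,1),(0,\theta)\big)=c_{n,s}\,\theta^{-(n-2s)/2}-\kappa_{n,s}(\theta-1)^{2s-n}\!\int_0^{\,4\theta/(\theta-1)^2}\!\frac{t^{\,s-1}}{(1+t)^{n/2}}\,\di t .
\]
Differentiating this and using $g(\theta_0)=0$ to eliminate the incomplete-Beta term should yield $g'(\theta_0)>0$, just as the computation \eqref{varphi_xibar}--\eqref{varphi_prime} does for the spectral operator (whose half-space Green function is instead a difference of two fundamental solutions). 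Finally, undoing the scaling and transporting $\tau$ back to $(\xi_{1\rho},\xi_{2\rho})$ through the $C^1$ convergence $\varphi_\rho\to\varphi_+$ gives $\nabla\varphi(\xi_{1\rho},\xi_{2\rho})\cdot\tau\ne0$ for all small $\rho$; since every constant used depends only on $n$ and $s$, the number $\rho$ is independent of $c$.

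The main obstacle is the case $\bar\xi_2'=0$: for $0<s<1$ the half-space Green function $G_+$ carries an incomplete Beta integral in place of the rational reflection term $|x-\hat y|^{2s-n}$ present when $s=1$, so the identity forced by $g(\theta_0)=0$ and the ensuing sign analysis of $g'(\theta_0)$ are no longer immediate; getting this sign right is the real work. A secondary technical point is to state precisely the $C^1_{\mathrm{loc}}$ domain continuity $G_\rho\to G_+$ off the diagonal, so that the gradient identity survives the limit — routine from interior regularity for $(-\Delta)^s$, but worth isolating.
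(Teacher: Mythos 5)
Your plan tracks the paper's proof closely (blow-up to the half-space, passage to $\varphi_+$, the two cases $\bar\xi_2'\neq 0$ and $\bar\xi_2'=0$), and the parts you carried out are correct. In the first case your argument is actually cleaner than the paper's: since $H_+(y,y)$ depends only on $y_n$, the Robin terms drop out, and the explicit representation
\[
  G_+(x,y)=\kappa_{n,s}\,|x-y|^{2s-n}\int_0^{\,4x_n y_n/|x-y|^2}\frac{t^{\,s-1}}{(1+t)^{n/2}}\,\di t
\]
makes the strict monotonicity in $|y'|$ immediate: increasing $|y'|$ increases $|x-y|$, which both decreases the prefactor and shrinks the interval of integration. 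The paper instead writes $G_+=\varGamma(1-d_{n,s}K)$ and differentiates $K$; your route avoids the auxiliary identities \eqref{K_r} entirely for this part.

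However there is a genuine gap: the case $\bar\xi_2'=0$ is where the lemma actually lives, and you do not prove it — you say the computation ``should yield $g'(\theta_0)>0$'' and explicitly flag it as ``the real work.'' The paper does carry this out, via $K'(\theta)>0$. In fact your chosen representation makes the computation short and you should finish it. Writing $r_0(\theta)=4\theta/(\theta-1)^2$ and $I(\theta)=\int_0^{r_0(\theta)}\frac{t^{s-1}}{(1+t)^{n/2}}\,\di t$, one has $g(\theta)=c_{n,s}\theta^{-(n-2s)/2}-\kappa_{n,s}(\theta-1)^{2s-n}I(\theta)$, and since $1+r_0=(\theta+1)^2/(\theta-1)^2$ and $r_0'(\theta)=-4(\theta+1)/(\theta-1)^3$,
\[
  g'(\theta)= -\frac{(n-2s)\,c_{n,s}}{2\,\theta^{(n-2s+2)/2}}
  +(n-2s)\,\kappa_{n,s}\,(\theta-1)^{2s-n-1}I(\theta)
  +\frac{\kappa_{n,s}\,4^{s}\,\theta^{s-1}}{(\theta-1)(\theta+1)^{n-1}} .
\]
Using $g(\theta_0)=0$, i.e. $\kappa_{n,s}(\theta_0-1)^{2s-n}I(\theta_0)=c_{n,s}\theta_0^{-(n-2s)/2}$, the first two terms combine into
$(n-2s)\,c_{n,s}\,\theta_0^{-(n-2s)/2}\bigl(\frac{1}{\theta_0-1}-\frac{1}{2\theta_0}\bigr)
=\frac{(n-2s)\,c_{n,s}\,(\theta_0+1)}{2\,\theta_0^{(n-2s+2)/2}(\theta_0-1)}>0$,
and the third term is manifestly positive; hence $g'(\theta_0)>0$. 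This matches the paper's conclusion, with your formula giving the boundary term in closed form where the paper keeps it as an integral. Without this step the proposal is a sketch, not a proof; with it, your route is a small improvement over the paper's.

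Two minor points. The scaling exponent for the fractional Green function is $\rho^{\,n-2s}$ (as you wrote), not $\rho^{\,n-2}$; the paper's exponent appears to be a typographical slip carried over from the $s=1$ case, and you handled it correctly. Also, the bound $|\bar\xi_1-\bar\xi_{2\rho}|=O(1)$ rests on the restricted-Laplacian Robin-function estimate, i.e. the analogue of Lemma~\ref{robin_function_lemma} for $(-\Delta_{|\Omega})^s$; you cite it, but note the paper defers this to Lemma~\ref{robin_function_lemma_restricted}, proved later in the same section via Kelvin transform of the Green function of a ball, so the logical dependence should be acknowledged rather than imported from the spectral case.
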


\begin{proof}
Fix $c<0$ and, for $\rho > 0$ small, suppose that $\xi_{1\rho} \in \partial \Omega_\rho$, $\xi_{2\rho} \in \Omega_\rho$ and $\varphi(\xi_{1\rho}, \xi_{2\rho}) = c$. After a rotation and a translation, we can assume that $\xi_{1\rho} = (0_{\R^{n-1}},\rho)$ and that the closest point of $\partial \Omega$ to $\xi_{1\rho}$ is the origin. To analyze the behavior of $\nabla \varphi (\xi_{1\rho}, \xi_{2\rho})$ as $\rho \to 0$, is convenient to consider the enlarged domain
\[
\Omega^\rho = \rho^{-1}\Omega,
\]
and use the notation $\bar \xi = \rho^{-1}\xi \in \Omega^\rho$ for $\xi \in \Omega$. Observe that the associated Green function of $\Omega^\rho$ and its regular part are given by
\[
  G_\rho(\bar \xi_1,\bar \xi_2) = \rho^{n-2} G(\xi_1,\xi_2), \quad H_\rho(\bar \xi_1,\bar \xi_2) = \rho^{n-2} H(\xi_1,\xi_2),
\]
and then 
\[
\varphi_\rho (\bar \xi_1,\bar \xi_2) = \rho^{n-2} \varphi (\xi_1,\xi_2),
\]
where
\[
\varphi_\rho (\bar \xi_1,\bar \xi_2) = H_\rho^{1/2}(\bar \xi_1,\bar \xi_1) H_\rho^{1/2}(\bar \xi_2,\bar \xi_2) - G_\rho (\bar \xi_1,\bar \xi_2).
\]

We denote by $\bar \xi_1$ the point $\bar \xi_{1\rho} = (0_{\R^{n-1}},1)$. Arguing as in the proof of Lemma~\ref{flow_closed}, we deduce that $|\bar \xi_1 - \bar \xi_{2\rho}| = O(1)$ as $\rho \to 0$. Then we can assume that $\bar \xi_{2\rho} \to \bar \xi_2$ as $\rho \to 0$ for certain $\bar \xi_2 = (\bar \xi_2',\bar \xi_2^{n})$, where $\xi_2^{n} \ge 1$. Observe that as $\rho \to 0$ the domain $\Omega^\rho$ becomes the half-space $\R^n_+ = \{ \xi = (\xi_1,\dotsc,\xi_n) \in \R^n : \xi_n > 0 \}$. Thereby, following the proof of Lemma~\ref{robin_function_lemma_restricted}, we deduce that $G_\rho$ and $H_\rho$ converge, respectively, to the Green function of the half-space $\R^n_+$, denoted by $ G_+ $, and its regular part $ H_+ $. Likewise, $\varphi_\rho$ and its gradient converge to $\varphi_+$ and its gradient, respectively, where 
\[
\varphi_+ (\xi_1,\xi_2) = H_+^{1/2}(\xi_1,\xi_1) H_+^{1/2}(\xi_2,\xi_2) - G_+ (\xi_1,\xi_2).
\]

The Green function of the half-space can be explicitly written as  (see for instance \cite{Ku1997})
\begin{equation} \label{G+}
  G_+(\xi_1,\xi_2) = \frac{a_{n,s}}{r^{(n-2s)/2}} [1 - d_{n,s} K(r,t)], \quad \xi_i = (\xi_i^1,\dotsc,\xi_i^n) \in \R^n_+,\ i=1,2.
\end{equation}
where
\[
  K(r,t) = \frac{1}{(r+t)^{(n-2)/2}} \int_{0}^{\frac{r}{t}} \frac{(r-tb)^{(n-2)/2}}{b^s(1+b)} \, \di b,
\]
$r = | \xi_1 - \xi_2 |^2$, $t = 4 \xi_1^n \xi_2^n$ and $ d_{n,s} $ is a positive constant. For the simplicity of notation, we shall omit $n$ and $s$ in the constants. Then, after a simple change of variables, the regular part of $ G_+ $ is
\[
  H_+(\xi_1,\xi_2) = \frac{a d}{r^{(n-2s)/2}} K(r,t) = \frac{ad}{t^{1-s}(r+t)^{(n-2)/2}} \int_{0}^{1} \frac{(1-b)^{(n-2)/2}}{b^s(1+\frac{r}{t}b)} \, \di b.
\]
Therefore, the Robin's function associated to the half-space can be written as
\begin{equation} \label{R+}
  R_+(\xi) = H_+(\xi,\xi) = \frac{ad\iota}{2^{n-2s}(\xi^n)^{n-2s}}, \quad \xi = (\xi^1,\dotsc,\xi^n),
\end{equation}
where 
\[
  \iota = \iota_{n,s} = \int_{0}^{1} \frac{(1-b)^{(n-2)/2}}{b^s} \, \di b.
\] 

Now, since $\varphi_\rho (\bar \xi_1,\bar \xi_{2\rho}) = c \rho^{n-2}$, we have
\begin{equation} \label{varphi_xibar_restricted}
	\varphi_+(\bar \xi_1,\bar \xi_2) = 0.
\end{equation}
Assume first that $\bar \xi_2' \neq 0$. Using the above expressions, we see that 
\[
 \varphi_+(\xi_1,\xi_2) = \frac{ad\iota}{2^{n-2s}(\xi_1^n)^{(n-2s)/2} (\xi_2^n)^{(n-2s)/2}} - \frac{a}{r^{(n-2s)/2}} + \frac{ad}{r^{(n-2s)/2}} K(r,t).
\]
Therefore, after differentiating along $ \tau = (0_{\R^n},\xi_2',0) $ and then evaluating at the corresponding points, we deduce that
\begin{align*}
 \nabla \varphi_+(\bar \xi_1,\bar \xi_2) \cdot \tau =&\ \frac{\partial}{\partial r}\left( - \frac{a}{r^{(n-2s)/2}} + \frac{ad}{r^{(n-2s)/2}} K(r,t)\right) (\nabla r \cdot \tau) \\
             =&\ \left[  \frac{(n-2s)}{r} \left( \frac{a}{r^{(n-2s)/2}} - \frac{ad}{r^{(n-2s)/2}} K(r,t) \right) + \frac{2 ad}{r^{(n-2s)/2}} \frac{\partial K}{\partial r}(r,t) \right]|\bar \xi_2'|^2 \\
             =&\ \left( \frac{ (n-2s) ad\iota }{2^{n-2s} (\bar \xi_2^n)^{(n-2s)/2}r} + \frac{2 ad}{r^{(n-2s)/2}} \frac{\partial K}{\partial r}(r,t) \right)|\bar \xi_2'|^2,
\end{align*}
where the last equality is a consequence of \eqref{varphi_xibar_restricted}. Thanks to \eqref{K_r} below, the right-hand side in the previous chain of equalities is positive.  Observe that for $\rho$ sufficiently small, $\tau = (0_{\R^n},\xi_2',0)$ is tangent to $\partial (\Omega_\rho \times \Omega_\rho)$ in $(\xi_{1\rho}, \xi_{2\rho})$. 

Assume now that $\bar \xi_2' = 0$, and suppose that $\bar \xi_2 = (0,\theta_0)$ with $\theta_0 > 1$. Consider the function $\varphi_+(\theta) = \varphi_+(\bar \xi_1,0_{\R^{n-1}},\theta) = \varphi_+(0_{\R^{n-1}},1,0_{\R^{n-1}},\theta)$; let us prove that $\varphi'_+(\theta_0) > 0$. Indeed, from \eqref{G+} and \eqref{R+}, 
\begin{align*}
  \varphi_+(\theta) = \frac{ad\iota}{2^{n-2s} \theta^{(n-2s)/2}} - \frac{a}{(\theta-1)^{n-2s}} + \frac{ad}{(\theta-1)^{n-2s}} K(\theta),
\end{align*}
where 
\[
  K(\theta) = \frac{1}{[(\theta-1)^2+4\theta]^{(n-2)/2}} \int_{0}^{\frac{(\theta-1)^2}{4\theta}} \frac{[(\theta-1)^2-4\theta b]^{(n-2)/2}}{b^s(1+b)} \, \di b.
\]
By differentiating $\varphi_+$ and evaluating at $\theta_0$, one has
\begin{align*}
  \varphi_+'(\theta_0) =&\ (n-2s) \left( - \frac{ad\iota}{2^{n-2s+1} \theta_0^{(n-2s+2)/2}} + \frac{a}{(\theta_0-1)^{n-2s+1}} - \frac{ad}{(\theta_0-1)^{n-2s+1}} K(\theta_0) \right)  \\
          &+ \frac{ad}{(\theta_0-1)^{n-2s}} K'(\theta_0) \\
         =&\ (n-2s) \left( - \frac{ad\iota}{2^{n-2s+1} \theta_0^{(n-2s+2)/2}} + \frac{ad\iota}{2^{n-2s} \theta_0^{(n-2s)/2}(\theta_0-1)}  \right) + \frac{ad}{(\theta_0-1)^{n-2s}} K'(\theta_0)   
\end{align*}
where the last equality is a consequence of \eqref{varphi_xibar_restricted}. Therefore
\[
 \varphi_+'(\theta_0) =  \frac{(n-2s) ad\iota (\theta_0+1)}{2^{n-2s+1} \theta_0^{(n-2s+2)/2}(\theta_0-1)} + \frac{ad}{(\theta_0-1)^{n-2s}} K'(\theta_0).
\]
It is then sufficient to prove that $K'(\theta_0) > 0$. Indeed, it is straightforward to show that
\begin{gather}
\frac{\partial K}{\partial r}(r,t) = \frac{(n-2)t}{2(r+t)^{n/2}} \int_{0}^{\frac{r}{t}} \frac{(r-tb)^{(n-4)/2}}{b^s} \, \di b, \label{K_r}\\
\frac{\partial K}{\partial t}(r,t) = -\frac{(n-2)r}{2(r+t)^{n/2}} \int_{0}^{\frac{r}{t}} \frac{(r-tb)^{(n-4)/2}}{b^s} \, \di b.
\end{gather} 
Then, for all $\theta>1$, we have
\[
  K'(\theta) = \frac{\partial K}{\partial r}(r,t)\ r'(\theta) + \frac{\partial K}{\partial t}(r,t)\ t'(\theta),
\]
with $r(\theta) = (\theta-1)^2$ and $t(\theta) = 4\theta$. Therefore
\begin{align*}
  K'(\theta) =&\ \frac{4(n-2)\theta(\theta-1)}{(\theta+1)^n} \int_{0}^{\frac{(\theta-1)^2}{4\theta}} \frac{[(\theta-1)^2-4\theta b]^{(n-4)/2}}{b^s} \, \di b \\ 
  &- \frac{2(n-2)(\theta-1)^2}{(\theta+1)^n} \int_{0}^{\frac{(\theta-1)^2}{4\theta}} \frac{[(\theta-1)^2-4\theta b]^{(n-4)/2}}{b^s} \, \di b \\
        =&\ \frac{2(n-2)(\theta-1)}{(\theta+1)^{n-1}} \int_{0}^{\frac{(\theta-1)^2}{4\theta}} \frac{[(\theta-1)^2-4\theta b]^{(n-4)/2}}{b^s} \, \di b > 0.
\end{align*}

And then we find that 
\[
  \nabla \varphi_+(\bar \xi_1,\bar \xi_2) \cdot \tau > 0,
\]
where $\tau = (0_{\R^n},0_{\R^{n-1}}, 1)$. Observe that $\tau$ is tangent to $\partial (\Omega_\rho \times \Omega_\rho)$ in $(\xi_{1\rho}, \xi_{2\rho})$, and the proof is complete.

\end{proof}

With the previous results on hand, the proof of Theorem~\ref{two-bubble_solutions} follows exactly as that in the end of the previous section. The details are left to the reader.


\subsection{The subcritical case, one-bubble solutions}

In this subsection we prove Theorem~\ref{theorem2} for the restricted fractional Laplacian. Let us then suppose that the exponent of the nonlinearity in \eqref{fractional_problem_restricted} is $p^*-\ep$ and that $ m=1 $, that is, we consider the subcritical case and study the concentration phenomena for just one bubble. In this case the function $\Psi$ in \eqref{psi_two-bubble_restricted} takes the form
\[
  \Psi(\xi, \Lambda)= \frac{1}{2} H(\xi,\xi) \Lambda^2 - \log \Lambda, \quad \xi \in \Omega, \Lambda>0.
\]
As in the previous section, the Robin's function $ R(\xi) = H(\xi,\xi)$ blows up at the boundary, implying that  its absolute minimums are somehow stable under small variations of it.

\begin{lemma} \label{robin_function_lemma_restricted}
Given $\xi \in \Omega$, we define the function $d(\xi) := \dist(\xi, \partial \Omega)$. Then, there exists positive constants $c_1$ and $c_2$ such that,
\begin{equation} \label{blow-up_restricted}
  c_1 d(\xi)^{2s-n} \le R(\xi) \le c_2 d(\xi)^{2s-n} \quad \text{for all } \xi \in \Omega.
\end{equation}
\end{lemma}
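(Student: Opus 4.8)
The plan is to establish \eqref{blow-up_restricted} following the proof of Lemma~\ref{robin_function_lemma}, by comparing the Green function $G$ of $\Omega$ (see \eqref{greenfun_restricted}) with those of model domains whose regular part is explicitly known. The basic tool is the \emph{monotonicity of the Green function with respect to domain inclusion}: if $\Omega_1 \subset \Omega_2$ then $G_{\Omega_1} \le G_{\Omega_2}$ on $\Omega_1 \times \Omega_1$, hence $H_{\Omega_1} \ge H_{\Omega_2}$ there; for the restricted operator this follows from the maximum principle, since extending $G_{\Omega_2}(\cdot,\xi)$ by zero outside $\Omega_1$ produces a supersolution of the $\Omega_1$-problem. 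One also records that $R(\xi) = H(\xi,\xi) > 0$ in $\Omega$ by the strong maximum principle, since $H(\cdot,\xi) = \varGamma(\cdot,\xi) > 0$ on $\R^n \setminus \Omega$ and $H(\cdot,\xi) \ge 0$.

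For the upper bound I would use the inscribed ball $B_{d(\xi)}(\xi) \subset \Omega$. Domain monotonicity gives $R(\xi) \le H_{B_{d(\xi)}(\xi)}(\xi,\xi)$, and the scaling invariance of the restricted fractional Laplacian yields $H_{B_r(x_0)}(x_0,x_0) = r^{2s-n} H_{B_1(0)}(0,0)$; since the regular part of the ball is smooth in the interior, $H_{B_1(0)}(0,0)$ is a finite positive constant (explicitly computable from Boggio's formula for the Green function of the ball). Taking $c_2 = H_{B_1(0)}(0,0)$ and $r = d(\xi)$ gives $R(\xi) \le c_2 \, d(\xi)^{2s-n}$ for every $\xi \in \Omega$.

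For the lower bound I would exploit the uniform exterior ball condition afforded by the smoothness of $\partial\Omega$: there is $r_0 > 0$ so that, for every $\xi \in \Omega$ with $d(\xi)$ small, the nearest point $\xi_0 \in \partial\Omega$ carries an exterior tangent ball $B_{r_0}(z_0) \subset \R^n \setminus \overline\Omega$ with $\xi$, $\xi_0$, $z_0$ colinear, so that $\Omega \subset E := \R^n \setminus \overline{B_{r_0}(z_0)}$ and $\dist(\xi,\partial E) = d(\xi)$. By domain monotonicity $R(\xi) \ge H_E(\xi,\xi)$, and $H_E$ is explicit: after a rigid motion and the fractional Kelvin transform (using \eqref{inversions}, a sphere through the center of inversion being sent to a hyperplane), $E$ is conformally the half-space, and one inserts the explicit half-space Green function \eqref{G+} in place of the one built by hand in Lemma~\ref{robin_function_lemma}. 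A direct computation then gives $H_E(\xi,\xi) = c\,\dist(\xi,\partial E)^{2s-n}(1+o(1)) = c\, d(\xi)^{2s-n}(1+o(1))$ as $\xi \to \xi_0$, so $R(\xi) \ge c_1 \, d(\xi)^{2s-n}$ whenever $d(\xi) \le \delta_0$. For $\xi$ with $d(\xi) \ge \delta_0$, $R$ is continuous and strictly positive on the compact set $\{\, d(\xi) \ge \delta_0\,\}$ while $d(\xi)^{2s-n}$ stays between two positive constants there (as $\Omega$ is bounded), so the inequality persists after shrinking $c_1$, and this yields \eqref{blow-up_restricted} on all of $\Omega$.

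The only genuinely delicate point — and it is bookkeeping rather than conceptual — is verifying the sharp near-boundary asymptotics of $H_E$, i.e. that the singularity of the regular part of the model domain at its boundary is of order \emph{exactly} $\dist(\cdot,\partial E)^{2s-n}$ with a positive constant uniform in $\xi_0 \in \partial\Omega$; this requires carefully tracking the conformal factors produced by the Kelvin transform together with the explicit kernel in \eqref{G+}--\eqref{R+}. The maximum-principle comparisons and the scaling computation for the ball are routine and identical to the spectral case treated in Lemma~\ref{robin_function_lemma}.
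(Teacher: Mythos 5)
Your argument is correct and follows essentially the same route as the paper: compare $G_\Omega$ with the Green function of a model domain via the maximum principle (an exterior tangent ball for the lower bound, an inscribed ball for the upper bound), and compute the model's regular part explicitly through the fractional Kelvin transform and the Kulczycki/Boggio formula \eqref{G+}. The only differences are cosmetic — the paper inverts the exterior unit ball about its own boundary sphere to land on the ball complement rather than on a half-space, and the paper only alludes to the interior-ball argument for the upper bound, for which your direct version using $B_{d(\xi)}(\xi)\subset\Omega$ together with the scaling identity $H_{B_r(x_0)}(x_0,x_0)=r^{2s-n}H_{B_1(0)}(0,0)$ is a clean way to fill in the detail.
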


\begin{proof}
Let $\xi_0=(\xi_0^1,\dotsc,\xi_0^n) \in \partial \Omega$, and consider the ball $B := B_{1}(0)$. After a rearrange of variables, we can assume that $ \xi_0 = (1,\dotsc,0) $ and $B \subset \Omega^c$. We shall use the Green function of $B$ and the Kelvin transform to bound from above the Green function of $\Omega$, which we denote by $G$.

The Green function of the unit ball $B$ can be explicitly written as  (see for instance \cite{Ku1997})
\begin{equation} \label{G+_ball}
  G_B(\xi_1,\xi_2) = \frac{a_{n,s}}{r^{(n-2s)/2}} [1 - d_{n,s} K(r,t)], \quad \xi_i = (\xi_i^1,\dotsc,\xi_i^n) \in \R^n_+,\ i=1,2.
\end{equation}
where
\begin{equation} \label{K}
\begin{aligned}
  K(r,t) &= \frac{1}{(r+t)^{(n-2)/2}} \int_{0}^{\frac{r}{t}} \frac{(r-tb)^{(n-2)/2}}{b^s(1+b)} \, \di b \\
         &= \frac{r^{(n-2s)/2}}{t^{1-s}(r+t)^{(n-2)/2}} \int_{0}^{1} \frac{(1-b)^{(n-2)/2}}{b^s(1+\frac{r}{t}b)} \, \di b ,
\end{aligned}
\end{equation}
$r = | \xi_1 - \xi_2 |^2$ and $ t = (1-|\xi_1|^2)(1-|\xi_2|^2) $. Comparing with the Green function $G_+$ in the half-space, see \eqref{G+}, the expression for $G_B$ in terms of $r$ and $t$ is the same. However, $t$ is differently defined here. 

Let us consider the Kelvin transform of $G_B$, and define the function
\begin{align*}
  G_{B^c}(\xi_1,\xi_2) &= |\xi_1|^{2s-n} |\xi_2|^{2s-n} G_B(\xi_1^*,\xi_2^*) \\
                       &= \varGamma(\xi_1-\xi_2) + \frac{ a_{n,s} d_{n,s} |\xi_1|^{2s-n} |\xi_2|^{2s-n} }{{t^*}^{1-s}(r^*+t^*)^{(n-2)/2}} \int_{0}^{1} \frac{(1-b)^{(n-2)/2}}{b^s(1+\frac{r^*}{t^*}b)} \, \di b,
\end{align*}
where $r^* = | \xi_1^* - \xi_2^* |^2$, $ t^* = (1-|\xi_1^*|^2)(1-|\xi_2^*|^2) $, and the last equality is a consequence of \eqref{inversions} and \eqref{K}. The function $ G_{B^c} $ satisfies
\begin{equation}
  \left\{
    \begin{aligned}
       (-\Delta_{|\Omega})^s G_{B^c}(\cdot,\xi_2) &= \delta_{\xi_2}(\cdot) && \text{in } \Omega, \\
       G_{B^c}(\cdot,\xi_2) &\ge 0 && \text{in } \R^n \setminus \Omega;
    \end{aligned}
  \right.
\end{equation}
and, as a consequence of the maximum principle, we deduce that 
\[
  G(\xi_1,\xi_2) \le G_{B^c}(\xi_1,\xi_2) \quad \text{for all } \xi_1,\xi_2 \in \Omega.
\] 

Then, we have that 
\[
  H(\xi_1,\xi_2) \ge  \frac{ a_{n,s} d_{n,s} |\xi_1|^{2s-n} |\xi_2|^{2s-n} }{{t^*}^{1-s}(r^*+t^*)^{(n-2)/2}} \int_{0}^{1} \frac{(1-b)^{(n-2)/2}}{b^s(1+\frac{r^*}{t^*}b)} \, \di b \quad \text{for all } \xi_1,\xi_2 \in \Omega.
\]
Thus, there exists a positive constant $c_1>0$ such that for all $\xi \in \Omega$ close to $\xi_0$,
\begin{align*}
  R(\xi) = H(\xi,\xi) &\ge \frac{a_{n,s} d_{n,s}}{(|\xi|+1)^{n-2s}(|\xi|-1)^{n-2s}} \int_{0}^{1} \frac{(1-b)^{(n-2)/2}}{b^s} \, \di b \\
                      &\ge \frac{c_1}{|\xi - \xi_0|^{n-2s}} 
\end{align*}
(observe that in this case $r^* = 0$ and $t^* = (1-|\xi|^2)^2$). Therefore, taking into account that $\xi_0 \in \partial \Omega$ is arbitrary, we conclude that in a neighborhood of $\partial \Omega$ there exist a constant $c_1 > 0$ such that $R(\xi) \ge c_1 d(\xi)^{2s-n}$. The smoothness of $H$ in $ \Omega $ allows us to extend this inequality to the whole domain. 

The other inequality in \eqref{blow-up_restricted} can be proven by a similar argument using an interior ball instead. The details are left to the reader. 
\end{proof}

\begin{proof}[Proof of Theorem~\ref{theorem2}: the restricted fractional Laplacian case]
	Thanks to the previous lemma, there still exist absolute minimums of small perturbations of $ R(\xi) = H(\xi,\xi)$. Theorem~\ref{theorem2} is then a consequence of this fact together Lemma~\ref{reduction_restricted} and \eqref{reduced_energy_gradient_restricted}.
\end{proof}



\end{document}